 \def\section{\@startsection{section}{1}%
  \z@{4\linespacing\@plus\linespacing}{\linespacing}%
  {\bfseries\centering}}
 \def\subsection{\@startsection{subsection}{2}%
   \z@{1.25\linespacing\@plus.7\linespacing}{.5\linespacing}%
   {\normalfont\bfseries}}
\theoremstyle{definition}
\newtheorem{example}[equation]{Example}
\newtheorem*{example*}{Example}
\newtheorem*{problem*}{Problem}
\newtheorem*{exercise*}{Exercise}
\newtheorem*{question*}{Question}
\theoremstyle{remark}
\newtheorem{notation}[equation]{Notation}
\newtheorem{remark}[equation]{Remark}
\newtheorem*{note*}{Note}
\newtheorem*{notation*}{Notation}
\newtheorem*{claim*}{Claim}
\newtheorem*{remark*}{Remark}
\theoremstyle{plain} \newtheorem{definition}[equation]{Definition}
\newtheorem{theorem}[equation]{Theorem}
\newtheorem{lemma}[equation]{Lemma}
\newtheorem{proposition}[equation]{Proposition}
\newtheorem*{definition*}{Definition}
\newtheorem*{theorem*}{Theorem}
\newtheorem*{corollary*}{Corollary}
\newtheorem*{lemma*}{Lemma}
\newtheorem*{proposition*}{Proposition}
\newtheorem*{conjecture*}{Conjecture}
\numberwithin{equation}{section}
\renewcommand{\:}{\colon}
\DeclareMathOperator{\Aut}{Aut}
\newcommand{\CC}{{\mathbb C}}
\DeclareMathOperator{\Hom}{Hom}
\DeclareMathOperator{\id}{id}
\DeclareMathOperator{\Map}{Map}
\DeclareMathOperator{\pt}{pt}
\newcommand{\QQ}{{\mathbb Q}}
\newcommand{\RR}{{\mathbb R}}
\newcommand{\TT}{\mathbb T}
\DeclareMathOperator{\Spin}{Spin}
\DeclareMathOperator{\tr}{tr}
\newcommand{\ZZ}{{\mathbb Z}}
\newcommand{\chiup}{\raise.5ex\hbox{$\chi$}}
\newcommand{\cir}{S^1}
\newcommand{\inv}{^{-1}}
\newcommand{\mstrut}{^{\vphantom{1*\prime y}}}
\newcommand{\temsquare}{\raise3.5pt\hbox{\boxed{ }}}
\newcommand{\zmod}[1]{\ZZ/#1\ZZ}
\newcommand{\zt}{\zmod2}
\renewcommand{\cir}{\ensuremath{S^1}}
\DeclareMathOperator{\1-mor}{1-mor}
\DeclareMathOperator{\2-mor}{2-mor}
\DeclareMathOperator{\Bun}{Bun}
\DeclareMathOperator{\Det}{Det}
\DeclareMathOperator{\Euler}{Euler}
\DeclareMathOperator{\Fam}{Fam}
\DeclareMathOperator{\OO}{O}
\DeclareMathOperator{\SO}{SO}
\DeclareMathOperator{\Sum}{Sum}
\DeclareMathOperator{\Sym}{Sym}
\DeclareMathOperator{\Vect}{Vect}
\DeclareMathOperator{\fr}{fr}
\DeclareMathOperator{\ob}{ob}
\DeclareMathOperator{\sign}{sign}
\DeclareMathOperator{\vol}{vol}
\newcommand{\Alg}{\mathcal{A}lg}
\newcommand{\Aop}{A^{\textnormal{op}}}
\newcommand{\At}{\A\mstrut _{\mathfrak{t}}}
\newcommand{\A}{\mathscr{A}}
\newcommand{\cA}{\mathscr{A}}
\newcommand{\BordSO}{\mathbf{Bord}^{SO}} 
\newcommand{\Bord}{\mathbf{Bord}}
\newcommand{\BunG}{\Bun_G}
\newcommand{\B}{\mathcal{B}}
\newcommand{\CG}{\CC [G]}
\newcommand{\C}{\mathcal{C}}
\newcommand{\D}{\mathcal{D}}
\newcommand{\Ft}{\F_{\mathfrak{t}}}
\newcommand{\F}{\mathcal{F}}
\newcommand{\G}{\mathcal{G}}
\newcommand{\QZ}{\QQ/\ZZ}
\newcommand{\RZ}{\RR/\ZZ}
\newcommand{\VectC}{\Vect\mstrut _{\CC}}
\newcommand{\bimod}[2]{(#1,#2)}
\newcommand{\bordSOG}{\textnormal{Bord}^{SO}_2(G)} 
\newcommand{\bordSO}{\textnormal{Bord}^{SO}} 
\newcommand{\cHP}{\widecheck{H\Pi }}
\newcommand{\dA}{A^\vee}
\newcommand{\famn}{\Fam_n}
\newcommand{\form}{b}
\newcommand{\frt}{\mathfrak{t}}
\newcommand{\gpd}{/\!/}
\newcommand{\hF}{\hat{F}}
\newcommand{\hT}{\hat{T}}
\newcommand{\qQZ}{q\mstrut _{\QZ}}
\newcommand{\tCG}{\CC ^\tau [G]}
\newcommand{\tF}{F^\tau}
\newcommand{\tG}{G^\tau }
\newcommand{\tK}{K^\tau }
\newcommand{\tRi}{\tau \inv _{\RR}}
\newcommand{\tR}{\tau \mstrut _{\RR}}
\newcommand{\tVect}{\Vect^\tau}
\newcommand{\tqQZ}{\tilde{q}\mstrut _{\QZ}}
\newcommand{\tq}{\tilde{q}}
\newcommand{\ttheta}{\theta ^\tau }
\newcommand{\cB}{\mathscr{B}}
\newcommand{\bC}{\mathbb{C}}
\newcommand{\bT}{\mathbb{T}}
\newcommand{\bZ}{\mathbb{Z}}
\newcommand{\sky}{\mathrm{Sky}}		
\newcommand{\vect}{\mathrm{Vect}}
\newcommand{\pic}{\mathrm{Pic}}
\begin{document}

\abovedisplayskip18pt plus4.5pt minus9pt
\belowdisplayskip \abovedisplayskip
\abovedisplayshortskip0pt plus4.5pt
\belowdisplayshortskip10.5pt plus4.5pt minus6pt
\baselineskip=15 truept
\marginparwidth=55pt




 \title[TQFTs from compact Lie groups]{Topological Quantum Field Theories
from Compact Lie Groups} 

 \author[D. S. Freed]{Daniel S.~Freed}
 \thanks{The work of D.S.F. is supported by NSF grant DMS-0603964}
 \address{Department of Mathematics \\ University of Texas \\ 1 University
Station C1200\\ Austin, TX 78712-0257}
 \email{dafr@math.utexas.edu}

  \author[M. J. Hopkins]{Michael J. Hopkins} 
  \thanks{The work of M.J.H. is supported by NSF grants DMS-0306519 and
DMS-0757293 as well as the DARPA grant FA9550-07-1-0555.}
  \address{Department of Mathematics\\Harvard University\\ 1 Oxford Street \\
 Cambridge, MA 02138}
  \email{mjh@math.harvard.edu}

\author{Jacob Lurie}
\address{Department of mathematics\\Massachusetts Institute of Technology\\
77 Massachusetts Avenue\\Cambridge, MA 02139-4307}
\email{lurie@math.mit.edu}
\thanks{The work of J.L. is supported by the American Institute of
Mathematics.}
 
 \author[C. Teleman]{Constantin Teleman} 
  \thanks{The work of C.T. is supported by NSF grant DMS-0709448}
 \address{Department of Mathematics \\ University of California \\ 970 Evans
Hall \#3840 \\ Berkeley, CA 94720-3840}  
 \email{teleman@math.berkeley.edu}

 \date{June 19, 2009}
\maketitle


Let $G$~be a compact Lie group and $BG$~a classifying space for~$G$.  Then a
class in~$H^{n+1}(BG;{\mathbb Z})$ leads to an $n$-dimensional topological
quantum field theory~(TQFT), at least for~$n=1,2,3$.  The theory for~$n=1$ is
trivial, but we include it for completeness.  The theory for~$n=2$ has some
infinities if $G$~is not a finite group; it is a topological limit of
2-dimensional Yang-Mills theory.  The most direct analog for~$n=3$ is an
$L^2$~version of the topological quantum field theory based on the classical
Chern-Simons invariant, which is only partially defined.  The TQFT
constructed by Witten and Reshetikhin-Turaev which goes by the name
`Chern-Simons theory' (sometimes `\emph{holomorphic} Chern-Simons theory' to
distinguish it from the $L^2$~theory) is completely finite.

The theories we construct here are extended, or multi-tiered, TQFTs which go
all the way down to points.  For the $n=3$~Chern-Simons theory, which we term
a `0-1-2-3~theory' to emphasize the extension down to points, we only treat
the cases where $G$~is finite or $G$~is a torus, the latter being one of the
main novelties in this paper. In other words, for toral theories we provide
an answer to the longstanding question: What does Chern-Simons theory attach
to a point?  The answer is a bit subtle as Chern-Simons is an
\emph{anomalous} field theory of oriented manifolds.\footnote{It is not
anomalous as a theory of framed manifolds.}  This framing anomaly was already
flagged in Witten's seminal paper~\cite{Wi}.  Here we interpret the anomaly
as an \emph{invertible} 4-dimensional topological field theory~$\A$, defined
on oriented manifolds.  The Chern-Simons theory is a ``truncated morphism''
$Z\:1\to \A$ from the trivial theory to the anomaly theory.  For example, on
a closed oriented 3-manifold~$X$ the anomaly theory produces a complex
line~$\A(X)$ and the Chern-Simons invariant~$Z(X)$ is a (possibly zero)
element of that line.  This is the standard vision of an anomalous quantum
field theory in general; here we use this description down to points.  The
invariant of a 4-manifold in the theory~$\A$ involves its signature and Euler
characteristic.  It was first discovered in a combinatorial
description~\cite{CY} and Walker~\cite{W} also uses~$\A$ in his description
of Chern-Simons (for a more general class of gauge groups).

Since a torus is an abelian group, the classical Chern-Simons action is
quadratic in the connection and so the theory is in some sense ``free''.
Indeed, one expects that the semi-classical approximation is exact.  This is
the point of view taken by Manoliu~\cite{Ma}, who constructs Chern-Simons for
circle groups as a 2-3~theory.  The invariant this theory assigns to a closed
oriented 3-manifold is made from classical invariants of 3-manifold topology:
is the integral over the space of flat connections of the square root of the
Reidemeister torsion times a spectral flow phase.  (There is an overall
``volume'' factor as well.)  The extension to a 1-2-3~theory is determined by
a \emph{modular tensor category} by a theorem of Reshetikhin and
Turaev~\cite{T}; it is assigned to the circle by the theory.  For toral
Chern-Simons this is a well-known category, and its relation to Chern-Simons
theory was explored recently by Stirling~\cite{St}, inspired by earlier work
by Belov-Moore~\cite{BM}.  In a different direction the Verlinde ring, which
encodes the 2-dimensional reduction of Chern-Simons theory, was recognized by
three of the authors as a twisted form of equivariant $K$-cohomology~\cite{FHT}.
That description, or rather its equivalent in $K$-homology, inspires the
categories of skyscraper sheaves which we use to extend toral Chern-Simons to
a 0-1-2-3~theory. 

The gauge theories discussed in this paper have a classical description as
pure gauge theories whose only field is a $G$-bundle with connection.  On any
fixed manifold~$M$ the $G$-bundles with connection form a groupoid, and it is
the underlying stack which should be regarded as the fields in the theory.
When $M$ is a point, there is up to isomorphism a unique $G$-bundle with
connection (the trivial one), but it has a group of automorphisms isomorphic
to~$G$.  Thus the stack of $G$-bundles with connections is~$*\gpd G=BG$.  A
cohomology class in~$H^{n+1}(BG; {\mathbb Z})$, or rather the geometric
manifestation which we describe (esp.~\ref{subsec:1.4} and~\ref{subsec:3.1}),
defines the classical gauge theory on~$M \simeq \pt$.  We directly
``quantize'' this classical data over a point.  This may be regarded, as
in~\cite{F2}, as the ``path integral'' over the fields on a point.
Integration over~$I\gpd G$, which is implemented as a categorical limit,
amounts to taking $G$-invariants, which is a higher version of the Gauss Law
in canonical quantization.  It is also worth noting that the space of
connections on~$M=\cir$ is the finite dimensional groupoid~$G\gpd G$, where
$G$~acts on~$G$ by conjugation.  Properly interpreted, this is the loop space
of the fields on a point.  On the other hand, for any manifold of dimension
at least two, the stack of connections is infinite dimensional unless $G$~is
finite.

In the latter case---for a finite group~$G$---the path integral in all
dimensions reduces to a finite sum, so is manifestly well-defined and
satisfies the axioms of a field theory.  This idea was initiated
in~\cite{F2}, where the notion of a classical action in an $n$-dimensional
field theory is extended to manifolds of all dimensions~$<n$ and a higher
categorical version of the path integral is used to heuristically define the
quantum invariants.\footnote{The application in~\cite{F2} is to \emph{derive}
the quantum group which appears in 3-dimensional Chern-Simons theory from the
classical Chern-Simons action.}  The developments in higher category theory
in the intervening years, and the new definitions and structure theory for
TQFT, make it possible to give a rigorous treatment of these finite path
integrals and to generalize them.  We give some indications in~\S\ref{sec:5}
and~\S\ref{cs8}; we hope to develop these ideas in detail elsewhere.
 
That structure theorem is the Baez-Dolan cobordism hypothesis, for which we
include an exposition in~\ref{subsec:2.2}.  A much more detailed account may
be found in~\cite{L}, which also contains a detailed sketch of the proof.
This theorem asserts that a fully extended TQFT---that is, a TQFT extended
down to points---is determined by its value on a point.  Furthermore, it
characterizes the possible values on a point.  This theorem provides one
possible construction of many of the theories in this paper.\footnote{A
corollary of the cobordism hypothesis characterizes theories on oriented and
spin manifolds: for these the value on a point is endowed with certain
equivariance data.  For the 3-~and 4-dimensional theories of most interest we
do not pursue this equivariance data here.}

The anomaly theory~$\A$ which appears in Chern-Simons theory has a classical
description as a 4-dimensional field theory.  In~\ref{subsec:6.1} we sketch
two such classical theories, each based on a 2-groupoid of gerbes (rather
than a 1-groupoid of connections, which is what appears in gauge theory).
These are free theories, so the path integral on a 4-manifold is Gaussian and
can be carried out explicitly.  Presumably this is a classical description of
the theories put forward by Crane and Yetter~\cite{CY}, though we do not
attempt to make the connection.  One of these classical theories is based on
a finite group, and for that we can apply the finite path integrals
of~\S\ref{sec:5} (in the generalized form for 2-groupoids) to construct the
quantum theory.  In~\S\ref{sec:cs6} we start at the other end of the theory,
that is, with points rather than 4-manifolds.  We describe two braided tensor
categories which may be attached to a point in the anomaly theory~$\A$; they
are Morita equivalent, so both define~$\A$.  In~\ref{subsec:c6.2} we
introduce a finite (lattice) approximation of the continuous version of the
anomaly theory, and so obtain many more Morita equivalent braided tensor
categories which define that theory (Proposition~\ref{morita}).  Finally,
in~\ref{cs9.2} we spell out what the anomaly theory assigns to manifolds of
various dimensions from~$0$ to~$4$.
 
The Chern-Simons theory proper is described in~\ref{sec:c9}; see
Theorem~\ref{thm:9.1}.  The tables in~\ref{subsec:c9.3} are helpful in
organizing the motley characters who play a part in our story.  Our account
here emphasizes 0-manifolds as that is the new element.

We begin in \S\ref{sec:1} with a discussion of the 1-dimensional case to set
down some basic notions.  Here the gauge group is an arbitrary compact Lie
group~$G$, and the path integral reduces to an integral over~$G$.  The
2-dimensional case~(\S\ref{sec:2}) exhibits more features.
In~\S\ref{sec:3} we discuss the 3-dimensional
gauge theory with finite gauge group.  On the one hand this is an application
of the finite path integral and on the other a warm-up for the treatment
in later sections of torus groups.  Section~\ref{sec:c1} is a set of variations
on `algebra' of increasing (categorical) complexity.  It provides some
underpinnings for our discussion of the anomaly theory~$\A$ and Chern-Simons
theory. 

As befits a conference proceedings, our presentation here emphasizes the big
picture and our speculation is uninhibited; many details are not worked out.

The recent paper~\cite{BN} generalizes the 3-dimensional untwisted
Chern-Simons theory for finite groups in a different direction and gives
applications to representation theory.  Also, Bartels, Douglas, and Henriques
have recently announced a construction of Chern-Simons theory for 1-connected
compact Lie groups~\cite{D} which uses conformal nets and the cobordism
hypothesis.   

Raoul Bott was an inspiration to each of us, both personally and through his
mathematics.  We offer this paper as a tribute. 

  \section{The cohomology group $H^2(BG; {\mathbb Z})$ and one-dimensional
  theories}\label{sec:1}

To begin, we recall the definition of a topological quantum field theory.
Let $(\bordSO_n,\amalg)$ be the bordism category whose morphisms are oriented
$n$-manifolds; it carries a symmetric monoidal structure given by disjoint
union.  Let $(\VectC,\otimes )$ denote the symmetric monoidal category of
finite dimensional complex vector spaces under tensor product.

        \begin{definition}[]\label{thm:2}
  An \emph{$n$-dimensional topological quantum field theory}~$F$ on oriented
manifolds is a symmetric monoidal functor
  \begin{equation*}
     F\:(\bordSO_n,\amalg )\longrightarrow (\VectC,\otimes ). 
  \end{equation*}
        \end{definition}

\noindent
 Thus a field theory assigns to each closed oriented $(n-1)$-manifold~$Y$ a
complex vector space~$F(Y)$ and to each closed oriented $n$-manifold~$X$ a
number~$F(X)\in \CC$.  If $X\:Y_0\to Y_1$ is a compact manifold with
boundary\footnote{Here `$-Y_0$'~denotes the oppositely oriented
manifold.}~$-Y_0\amalg Y_1$, then $F(X)\:F(Y_0)\to F(Y_1)$ is a linear map.
The functor~$F$ maps the gluing operation of bordisms to the composition of
linear maps.  We refer the reader to~\cite{A}, \cite{S}, \cite{L} for more
details and exposition about this definition.

There are many possible variations on Definition~\ref{thm:2}.  For the domain
we may take a bordism category of manifolds with other topological
structures: oriented manifolds, spin manifolds, framed manifolds, etc.  There
are also other choices for the codomain.  This definition describes
``two-tiered theories''; we will see more tiers in~\S\ref{sec:2}.  If, say,
$n=3$, then we designate this a `2-3~theory' to emphasize the two tiers.
 
Let $n=1$.  Then a theory~$F$ assigns a vector space~$F(\pt_+)$ to the
positively oriented point.  We use the oriented interval with all
possibilities for decomposing the boundary into incoming and outgoing
components to deduce that $F(\pt_-)$~is the dual space of~$F(\pt_+)$ and the
intervals give the duality pairing.  Finally, $F(\cir)=\dim F(\pt)$, and the
entire theory is determined up to isomorphism by this nonnegative integer.


We now describe 1-dimensional pure gauge theory with compact gauge group~$G$
and action given by a class in~$H^2(BG;{\mathbb Z})$.  Let $\TT\subset \CC$ be the circle
group. 
        \begin{proposition}[]\label{thm:1}
 There is an canonical isomorphism $\Hom(G,\TT)\xrightarrow{\cong }H^2(BG; {\mathbb Z})$.
        \end{proposition}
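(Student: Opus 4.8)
The plan is to write the isomorphism down explicitly, verify it first for $G$ finite and for $G$ connected, and then patch those cases together. Model $B\TT$ by $\CP^\infty=K(\ZZ,2)$, so that $H^2(B\TT;\ZZ)\cong\ZZ$ carries a canonical generator $c$, namely $c_1$ of the universal line bundle. A continuous homomorphism $\rho\colon G\to\TT$ induces $B\rho\colon BG\to B\TT$, and I take the map of the Proposition to be $\Phi_G\colon\rho\mapsto(B\rho)^{*}c$; equivalently $\Phi_G(\rho)=c_1(EG\times_\rho\CC)$, the first Chern class of the associated line bundle. Since $\TT$ is abelian, $B\TT$ inherits the structure of a topological abelian group with $c$ primitive (equivalently, $c_1$ is additive under tensor product), so $\Phi_G$ is a homomorphism of abelian groups, and it is visibly natural in $G$.

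To reach a general compact Lie group, let $G_0\subseteq G$ be the identity component and $\pi=\pi_0(G)$, a finite group, and use the fibration $BG_0\to BG\to B\pi$. Because $G_0$ is connected, $H^1(BG_0;\ZZ)=0$, so in the Serre spectral sequence the row $q=1$ vanishes entirely; hence nothing can hit $E_2^{2,0}=H^2(B\pi;\ZZ)$ and the associated graded term $E_\infty^{1,1}$ is zero, so the filtration of $H^2(BG;\ZZ)$ collapses to a short exact sequence
\[
0 \to H^2(B\pi;\ZZ)\to H^2(BG;\ZZ)\to \ker\bigl(d_3\colon H^2(BG_0;\ZZ)^{\pi}\to H^3(B\pi;\ZZ)\bigr)\to 0.
\]
On the other side, restriction of characters places $\Hom(G,\TT)$ in the companion exact sequence
\[
0 \to \Hom(\pi,\TT)\to \Hom(G,\TT)\to \Hom(G_0,\TT)^{\pi}\xrightarrow{\,\delta\,}H^2(\pi;\TT)\cong H^3(B\pi;\ZZ),
\]
where $\delta$ is the obstruction to extending a $\pi$-invariant character of $G_0$ over $G$ and the last isomorphism holds because $\pi$ is finite (so $H^{*}(\pi;\RR)=0$ in positive degrees). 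By naturality $\Phi$ carries the first sequence to the second, so once $\Phi_\pi$ is an isomorphism, $\Phi_{G_0}$ is a $\pi$-equivariant isomorphism, and $\delta$ agrees with $d_3\circ\Phi_{G_0}$ on $\pi$-invariants, a diagram chase forces $\Phi_G$ to be an isomorphism too.

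It remains to handle the two base cases. If $G$ is finite, then $H^{*}(BG;\RR)$ vanishes in positive degrees, so the coefficient sequence $0\to\ZZ\to\RR\to\TT\to0$ yields $H^2(BG;\ZZ)\cong H^1(BG;\TT)\cong\Hom(H_1(BG;\ZZ),\TT)=\Hom(G,\TT)$, and tracing the isomorphisms identifies the composite with $\Phi_G$. If $G$ is connected, then $BG$ is $1$-connected, so Hurewicz together with the fibration $G\to EG\to BG$ gives $H_2(BG;\ZZ)\cong\pi_2(BG)\cong\pi_1(G)$; since $H_1(BG;\ZZ)=0$, universal coefficients gives $H^2(BG;\ZZ)\cong\Hom(\pi_1G,\ZZ)$, under which $\Phi_G(\rho)$ becomes $\pi_1(\rho)\colon\pi_1G\to\pi_1\TT=\ZZ$. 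Finally $\Hom(G,\TT)=\Hom(G/[G,G],\TT)$ with $G/[G,G]$ a torus $\TT^{r}$ (here $[G,G]$ is the closed semisimple subgroup), and the fibration $[G,G]\to G\to\TT^{r}$ shows $\pi_1G\twoheadrightarrow\ZZ^{r}$ with finite kernel, so $\Hom(\pi_1G,\ZZ)\cong\Hom(\ZZ^{r},\ZZ)\cong\Hom(\TT^{r},\TT)$; unwinding, $\rho\mapsto\pi_1(\rho)$ is precisely this composite. Hence $\Phi_G$ is an isomorphism for $G$ connected and for $G$ finite.

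I expect the one genuinely fiddly point to be the compatibility in the non-connected case: identifying the transgression-type differential $d_3$ of the Serre spectral sequence with the obstruction $\delta$. Both live in $H^3(B\pi;\ZZ)$ and both are, morally, the same cocycle built out of the extension $1\to G_0\to G\to\pi\to1$, but the chain-level comparison must be set up with some care; the two base cases and the surrounding abelian-group bookkeeping are routine. (Alternatively one could prove surjectivity of $\Phi$ by representing a class in $H^2(BG;\ZZ)=[BG,K(\ZZ,2)]$ by a map, looping it to an $H$-map $G\simeq\Omega BG\to\Omega B\TT\simeq\TT$, and rigidifying that to an honest homomorphism; but the obstruction-theoretic input is the same, so this reroute does not really avoid the point.)
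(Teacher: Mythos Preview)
The paper does not actually prove this proposition. It describes the map exactly as you do---the first Chern class of the line bundle associated to a character, or equivalently the classifying map of the associated $\TT$-bundle on $\ast\gpd G$---and then defers to the literature (Atiyah--Segal, Brylinski) for the statement that such bundles are classified by $H^2(BG;\ZZ)$. So there is nothing in the paper to compare your argument against; what you have written is substantially more than the paper provides.

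Your argument is sound. The two base cases are handled cleanly, and the Serre spectral sequence for $BG_0\to BG\to B\pi$ is set up correctly (the vanishing of the $q=1$ row does exactly what you say). Injectivity of $\Phi_G$ already falls out of the commuting squares without any appeal to $d_3$: if $\Phi_G(\rho)=0$ then $\rho|_{G_0}$ is trivial by injectivity of $\Phi_{G_0}$, so $\rho$ factors through $\pi$ and dies by injectivity of $\Phi_\pi$. It is only surjectivity that needs the identification of $d_3$ with $\delta$, and you are right that this is the one substantive point. One way to pin it down: both classes measure the obstruction to extending the line bundle $L_\chi=EG_0\times_\chi\CC$ from $BG_0$ to $BG$, and a direct cocycle computation in the bar model for $BG$ shows they agree under the Bockstein $H^2(\pi;\TT)\cong H^3(B\pi;\ZZ)$. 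Alternatively, your parenthetical route is closer to what the cited references do and avoids the spectral sequence entirely: a class in $H^2(BG;\ZZ)=[BG,B\TT]$ is a $\TT$-bundle on $BG$, which pulls back to a trivial bundle on the contractible $EG$; the $G$-action on a trivialization is then a map $G\to\TT$ that one checks is a continuous homomorphism inverse to $\Phi_G$.
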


\noindent
 In purely topological terms the classifying space~$BG$ carries a universal
principal $G$-bundle, and the isomorphism assigns to an abelian
character~$G\to\TT$ the Chern class of the associated principal $\TT$-bundle.
A more rigid viewpoint: abelian characters are in 1:1 correspondence with
isomorphism classes of principal $\TT$-bundle on the groupoid~$*\gpd G$.  A
proof that the latter are classified by~$H^2(BG; {\mathbb Z})$ may be found in several
texts, e.g.~\cite[Proposition~6.3]{AS}, \cite[Appendix]{Bry}.
 
Let $\lambda \:G\to\TT$ be an abelian character.  Then the \emph{classical}
1-dimensional gauge theory associated to~$\lambda $ assigns to each
$G$-bundle with connection on the circle the complex number~$\lambda (g)$,
where $g$~is the holonomy.  This is what physicists term the `exponentiated
action' of the theory.  Notice that holonomy depends on an orientation on the
circle, so this is a theory of \emph{oriented} 0-~and 1-manifolds.  To the
unique connection on a point is attached the trivial complex line~$\CC$; the
automorphism group~$G$ of the unique connection acts on~$\CC$ via~$\lambda $.
More precisely, this is correct if the point is positively oriented; for the
negatively oriented point the action is via~$\lambda \inv $.  Now the
standard quantization procedure constructs a theory~$F$ with $F(\pt_+)$~the
elements in~$\CC$ invariant under the action of the automorphisms.  (This is
called the `Gauss law' in physics.)  The result is the zero vector space
if~$\lambda \not= 1$ is nontrivial and is~$\CC$ if~$\lambda =1$ is trivial.
Then $F(\cir)=0$ in the first case and $F(\cir)=1$ in the second, as
$F(\cir)=\dim F(\pt_+)$.  These values may be understood as the result of the
path integral over the groupoid~$G\gpd G$ of connections on~$\cir$ with
respect to Haar measure:
  \begin{equation*}
     \frac{1}{\vol G}\int_{G} \lambda (g)\,dg =\begin{cases} 0,&\lambda
     \not= 1;\\1,&\lambda =1.\end{cases} 
  \end{equation*} 

        \begin{remark}[]\label{thm:4}
There are cohomology theories~$h$ with the property that $h^2(BG)$~is
isomorphic to the set of $\zt$-graded abelian characters~$(\lambda ,\epsilon
)$, i.e., $\lambda \:G\to \TT$ is an abelian character and $\epsilon =0,1$.
For example, we can take cohomology theory~$h$ determined by a spectrum with
two nontrivial homotopy groups: $\ZZ$~in degree~0 and~$\zt$ in degree~$-2$
connected by the nontrivial $k$-invariant.  A less efficient alternative is
the Anderson dual of the sphere~\cite[Definition~B.2]{HS}.  In either case
spin structures are required to define pushforward, so the corresponding
field theory is defined on spin manifolds.  It is then natural to let the
values of the theory lie in $\zt$-graded complex vector spaces.  If~$\epsilon
=0$, the theory factors through oriented manifolds and reduces to the
previous.  If~$\epsilon =1$, then the classical theory assigns the odd
line~$\CC$ to the unique connection on a point.  For~$\lambda \not= 1$ we
obtain the trivial theory as above, but now if~$\lambda =1$, then
$F(\cir_{\textnormal{n}})=-1$ and $F(\cir_{\textnormal{b}})=1$, where the
subscript indicates whether we consider the circle as endowed with the
nonbounding (n) or bounding (b) spin structure.

There are similar generalizations of the higher dimensional theories
discussed below.
        \end{remark}

        \begin{remark}[]\label{thm:5}
 All of the \emph{classical} theories alluded to in this paper are purely
topological and are most systematically defined by refining the class
in~$h^{n+1}(BG)$ to an object in the corresponding \emph{differential} theory
and then transgressing; see~\cite{F1} for the case~$n=3$.  The \emph{quantum}
theories, which are our focus, do not depend on the differential refinement.
        \end{remark}

  \section{The cohomology group $H^3(BG)$ and two-dimensional
theories}\label{sec:2} 

 \subsection{$H^3(BG; {\mathbb Z})$ for arbitrary~$G$}\label{subsec:2.1}

Analogous to Proposition~\ref{thm:1} we have the following. 

        \begin{proposition}[]\label{thm:6}
Let $G$~be a compact Lie group.  The cohomology group~$H^3(BG; {\mathbb Z})$
can be identified with the set of isomorphism classes of central extensions
$\TT\to\tG\to G$.
        \end{proposition}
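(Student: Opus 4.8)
The statement identifies $H^3(BG;\ZZ)$ with isomorphism classes of central extensions $\TT\to\tG\to G$. The cleanest route is to pass through the more flexible model of $\TT$-\emph{gerbes} on the groupoid $*\gpd G = BG$, mirroring the proof strategy already invoked for Proposition~\ref{thm:1} (where principal $\TT$-bundles on $*\gpd G$ were classified by $H^2(BG;\ZZ)$, cf.~\cite[Proposition~6.3]{AS}, \cite[Appendix]{Bry}). First I would recall that for any (nice topological) groupoid $\mathcal{X}$, the group of isomorphism classes of $\TT$-gerbes on $\mathcal{X}$ is $H^3(|\mathcal{X}|;\ZZ)$, where $|\mathcal{X}|$ is the geometric realization; applied to $\mathcal{X}=*\gpd G$ this gives the target $H^3(BG;\ZZ)$. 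So the crux is a dictionary between central extensions of $G$ by $\TT$ and $\TT$-gerbes on $*\gpd G$.

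\textbf{From central extensions to gerbes.} Given a central extension $\TT\to\tG\xrightarrow{\pi} G$, I would build a $\TT$-gerbe on $*\gpd G$ as follows. Working with the simplicial model of $BG$ (nerve of the one-object groupoid), a gerbe is equivalently a degree-3 class realized by a $\TT$-central extension of the groupoid $G\rightrightarrows *$ in the sense of Brylinski--McLaughlin/Murray: that is, a $\TT$-bundle $L$ on the arrow space $G$ together with an associative multiplication $L_{g}\otimes L_{h}\to L_{gh}$. Take $L$ to be the $\TT$-bundle $\tG\to G$ itself; the group multiplication on $\tG$ descends to exactly the required bundle isomorphism over the composition map $G\times G\to G$, and associativity in $\tG$ is the gerbe cocycle condition. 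An isomorphism of central extensions induces an isomorphism of the associated $\TT$-bundles-with-multiplication, hence of gerbes, so this is well-defined on isomorphism classes and is a group homomorphism for the Baer sum on the left and tensor product of gerbes on the right.

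\textbf{Inverse construction and bijectivity.} For surjectivity and injectivity I would reverse the construction: a $\TT$-gerbe on $*\gpd G$, presented as a multiplicative $\TT$-bundle $L$ over $G\rightrightarrows *$, recovers a central extension by setting $\tG$ to be the total space of $L$ with the multiplication supplied by the gerbe data; the unit and inverse come from the normalization of the gerbe over the identity morphism. Chasing the definitions shows these two constructions are mutually inverse up to canonical isomorphism. Alternatively, and perhaps more transparently, one can run the low-degree exact sequence of the fibration $B\TT\to B\tG\to BG$: since $B\TT\simeq \CP^\infty$ is a $K(\ZZ,2)$, the extension $\tG$ yields a fibration classified by a map $BG\to B^2B\TT = K(\ZZ,3)$, i.e.\ a class in $H^3(BG;\ZZ)$, and conversely such a class produces the extension as the pullback; one checks the two assignments are inverse and additive.

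\textbf{Main obstacle.} The genuine work is the \emph{topological} input that $\TT$-gerbes on $*\gpd G$ (equivalently, multiplicative $\TT$-bundles over the nerve, equivalently fibrations over $BG$ with fiber $K(\ZZ,2)$) are classified by $H^3(BG;\ZZ)$ — this is the degree-3 analog of the cited classification of $\TT$-bundles by $H^2$, and for a Lie group $G$ one must be slightly careful about the topology (using the realization $|{*\gpd G}|\simeq BG$ and the fact that $B\TT=K(\ZZ,2)$). Once that identification is in hand, the bijection between central extensions and gerbes is essentially a bookkeeping exercise: it amounts to observing that the multiplicative structure on a $\TT$-bundle over $G$ required to define a gerbe class on $BG$ is \emph{exactly} the data of a group structure on the total space making it a central extension. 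I would cite the gerbe classification from \cite{Bry} (or the differential-cohomology discussion in \cite{F1} for the $n=3$ case) and give the extension$\leftrightarrow$gerbe translation explicitly, flagging that naturality and additivity follow by inspection.
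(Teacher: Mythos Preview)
Your proposal is correct and matches the paper's approach: the paper does not give a self-contained proof but rather describes a central extension as a hermitian line bundle $K\to G$ with associative multiplication isomorphisms (i.e., your multiplicative $\TT$-bundle on the arrow space of $*\gpd G$), interprets this in the simplicial model~\eqref{eq:3} for $BG$, and then cites \cite[Proposition~6.3]{AS}, \cite[Part~1,\S2.2.1]{FHT}, \cite[Appendix]{Bry} for the identification with $H^3(BG;\ZZ)$. Your alternative fibration argument (classifying $B\TT\to B\tG\to BG$ by a map $BG\to K(\ZZ,3)$) corresponds to the paper's remark that the class is the obstruction to lifting the universal $G$-bundle to a $\tG$-bundle; you simply spell out more of the bookkeeping than the paper does.
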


\noindent
 In purely topological terms the class in~$H^3(BG; {\mathbb Z})$ attached to
a central extension $\TT\to\tG\to G$ is the obstruction to lifting the
universal $G$-bundle to a $\tG$-bundle.\footnote{One familiar case is
$\TT\to\Spin^c_n\to SO_n$ in which case the obstruction is the third
Stiefel-Whitney class~$W_3$.}  A central extension is equivalently a smooth
hermitian line bundle $K=\tK\to G$ together with isomorphisms
  \begin{equation}\label{eq:28}
     \theta _{x,y}\:K_x\otimes K_y\to K_{xy},\qquad x,y\in G,
  \end{equation}
for each pair in~$G$ which satisfy an associativity constraint  
  \begin{equation}\label{eq:29}
     \theta _{xy,z}\,(\theta _{x,y} \otimes \id_{K_z})= \theta
     _{x,yz}\,(\id_{K_x}\otimes \,\theta _{y,z} ),\qquad 
     x,y,z\in G,
  \end{equation}
for each triple in~$G$.  Recall the simplicial model
  \begin{equation}\label{eq:3}
     \xymatrix@1{\textnormal{$*$} \;\;&\;\; G\ar@<.5ex>[l] \ar@<-.5ex>[l]
     \;\;&\;\; G^2\ar@<1ex>[l] \ar[l] \ar@<-1ex>[l] \;\;&\;\; 
      G^3 \ar@<1.5ex>[l] \ar@<.5ex>[l] \ar@<-.5ex>[l]
     \ar@<-1.5ex>[l] \;\;&\;\; 
      G^4 \ar@<2ex>[l] \ar@<1ex>[l] \ar[l] \ar@<-1ex>[l]
     \ar@<-2ex>[l] \;\;\cdots} 
  \end{equation}
for~$BG$, which is quite useful for computing cohomology~\cite{B}.  Write
$S_p=G^p$ for the $p^{\textnormal{th}}$~space in~\eqref{eq:3}.  In these
terms a central extension is given by a line bundle $\tK\to S_1$, a
trivialization~\eqref{eq:28} of the alternating tensor product of its
pullback to~$S_2$, the latter constrained so that the alternating product of
its pullbacks to~$S_3$ is trivial~\eqref{eq:29}.  A proof of
Proposition~\ref{thm:6} may be found in~\cite[Proposition 6.3]{AS},
\cite[Part 1,\S2.2.1]{FHT}, \cite[Appendix]{Bry}.
 
Recall that the complex group algebra~$\CG$ of~$G$ is the abelian group of
complex functions on~$G$ with multiplication given by convolution with
respect to Haar measure, that is, pushforward under multiplication
$m\:G\times G\to G$.  If $\dim G>0$ then we take the continuous functions
on~$G$, which form a topological ring using the sup norm.  If $\dim G=0$
($G$~is finite) then the center of~$\CG$ is the commutative algebra with
natural basis the $\delta $-functions on the conjugacy classes.
Alternatively, it is the subalgebra of central functions, a description which
persists for all~$G$.

Associated to a central extension $\TT\to \tG\to G$ is a twisted complex
group algebra~$\tCG$, the algebra of sections of~$\tK\to G$ under
convolution.  Its center is again the commutative algebra of central
functions.

 \subsection{Gauge theory with finite gauge group}\label{subsec:1.3}

In an $n$-dimensional gauge theory with finite gauge group~$G$ the path
integral over a closed $n$-manifold~$X$ reduces to a finite sum, and this sum
defines the quantum invariant~$F(X)$.  The starting point of~\cite{F2} is an
expansion of this standard picture: the classical action can be extended to
manifolds of dimension~$<n$ and a higher categorical version of the path
integral determines the quantum invariants.  For any manifold~$M$ of
dimension~$\le n$, the groupoid of $G$-bundles may be identified with the
fundamental groupoid of the mapping space~$\Map(M,BG)$.  One can obtain a
larger class of theories by replacing~$BG$ with a topological space which has
a finite number of nonzero homotopy groups, each of which is a finite
group~\cite{Q}.  At least heuristically, the path integrals on manifolds of
dimension~$\le n$ define an extended field theory down to points which is
manifestly local and functorial.  We write these finite sums explicitly in
this section.  In~\S\ref{sec:5} we indicate one possible path which should
put these ideas on a rigorous footing.

For the 2-dimensional finite gauge theory based on the central extension
$\TT\to\tG\to G$ the classical invariant attached to a $G$-bundle $P\to X$
over a closed oriented 2-manifold is
  \begin{equation*}
     I(X,P) = e^{2\pi i \sigma _X(P)}, 
  \end{equation*}
where $\sigma \in H^2(BG;\QQ/\ZZ)\cong H^3(BG;\ZZ)$ is the characteristic
class corresponding to the central extension (see Proposition~\ref{thm:6})
and $\sigma _X(P)\in \QQ/\ZZ$ is the characteristic number of the bundle.
Physicists call~$I(X,P)$ the `exponentiated action' of the field~$P$ on~$X$.
The quantum invariant of~$X$ is then
  \begin{equation}\label{eq:31}
     F(X)= \sum\limits_{P}\frac{1}{\#\Aut P}\;I(X,P), 
  \end{equation}
where the sum is over a set of representative $G$-bundles on~$X$, one in each
equivalence class.  For the trivial central extension $I(X,P)=1$ for all~$P$
and \eqref{eq:31}~counts the number of representations of~$\pi _1X$ into the
finite group~$G$.  (There is an overall factor of~$1/\#G$.)  In that case we
can extend to a theory of unoriented manifolds.   
 
Now consider this 2-dimensional theory over the circle~$\cir$.  Recall from
the introduction that the groupoid of $G$-bundles over~$\cir$ is equivalent
to~$G\gpd G$, where $G$~acts on itself by conjugation.  The value of the
classical theory~$I$ is given by the central extension $\tG\to G$, viewed as
an equivariant principal $G$-bundle.  In other words, the value~$I(\cir,P)$
of this theory at a bundle with holonomy $x\in G$ is the circle
torsor~$\tG_x$ (together with the action of the centralizer of~$x$, the
automorphisms of~$P$, by conjugation in the central extension.)  In the
quantum theory we make a sum of the corresponding hermitian lines~$K_P$
analogous to~\eqref{eq:31} and so compute that $F(\cir)$~is the vector space
of central sections of $\tK\to G$:
  \begin{equation}\label{eq:32}
     F(\cir) = \bigoplus\limits_{P}\,\frac{1}{\#\Aut P}\,(K_P)^{\Aut P}  
  \end{equation}
In this expression the metric on the hermitian line~$K_x$ is scaled by the
prefactor.
 
The quantum ``path integrals''~\eqref{eq:31} and~\eqref{eq:32} may be
expressed in categorical language.  Let $\bordSOG$ denote the bordism
category whose objects are closed oriented 1-manifolds equipped with a
principal $G$-bundle and whose morphisms are compact oriented 2-manifolds
equipped with a principal $G$-bundle.  Then the classical topological
theory~$I$ is a 2-dimensional topological quantum field theory in the sense
of Definition~\ref{thm:2} with a special feature: its values are
\emph{invertible}.  Thus the numbers attached to closed oriented surfaces
with $G$-bundle are nonzero and the vector spaces attached to $G$-bundles
over the circle are lines (which are invertible in the collection of vector
spaces under tensor product).  Now there is an obvious forgetful functor $\pi
\:\bordSOG\to\bordSO_2$ which omits the $G$-bundle.  Then the quantum field
theory $F$, obtained by summing
 over $G$-bundles, can be viewed as a kind of pushforward of $I$ along $\pi$.
The relevant pushforward procedure has an analog in classical topology.
Let $\pi \:E\to S$ be a proper fiber bundle of topological manifolds whose
fibers carry a suitable $n$-dimensional orientation.  Then there is a map
$\pi _*\:h^{\bullet }(E)\to h^{\bullet -n}(S)$ variously termed the
`pushforward' or `direct image' or `wrong-way map' or `\emph{umkehr} map' or
`Gysin map' or `transfer'.  The analogy with our situation is tighter if we
think of this pushforward on the level of cochains, or geometric
representatives, rather than cohomology classes.

Formulas~\eqref{eq:31} and~\eqref{eq:32}, together with a similar formula for
compact surfaces with boundary, can be used to define the functor~$F$.  This
constructs a 1-2~theory.  We would like, however, to continue down to points,
i.e., to a 0-1-2 theory.  To express the higher gluing laws encoded in an
$n$-dimensional TQFT which extends down to points we use the language of
higher category theory; see~\cite{L} for an introduction and for much more
exposition about the $n$-categories we now introduce.  Let $\BordSO_n$ denote
the $n$-category\footnote{$\BordSO_n$~should be regarded as an $(\infty
,n)$-category.  This means roughly that we consider families of manifolds
parametrized by a topological space rather than simply single manifolds.}
whose objects are finite unions of oriented points, 1-morphisms are oriented
bordisms of objects, 2-morphisms are oriented bordisms of 1-morphisms, and so
forth.\footnote{An $(\infty ,n)$-category~has $r$-morphisms for all~$r$; they
are invertible for~$r>n$.  The $r$-morphisms in~ $\BordSO_n$ for $r > n$ are
given by $(r-1)$-parameter families of diffeomorphisms of manifolds which
preserve all boundaries (and corners).} Note $\BordSO_n$~carries a symmetric
monoidal structure given by disjoint union.  Let $\C$~be any symmetric
monoidal $n$-category.

        \begin{definition}[]\label{thm:23}
 An \emph{extended $n$-dimensional TQFT with values in~$\C$\/} is a symmetric
monoidal functor $F\:\BordSO_n\to\C$. 
        \end{definition}
 
The 0-1-2 finite gauge theory can be constructed by hand.

 \subsection{0-1-2 theories for general~$G$}\label{subsec:2.2}

Let $F$ be a fully extended topological field theory of dimension $n$.
We can regard $F$ as a prescription for assigning invariants to
manifolds with corners of all dimensions $\leq n$, together with set of
rules for how these invariants behave when we glue manifolds together.
Since every smooth manifold $M$ can be assembled by gluing together very simple pieces
(for example, by choosing a triangulation of $M$), we might imagine that the value of
$F$ on arbitrary manifolds is determined by its values on a very small class
of manifolds. In order to formulate this idea more precisely, we need to introduce a bit of terminology.

\begin{definition}
Let $M$ be a manifold of dimension $m \leq n$. An {\it $n$-framing} of $M$ is a trivialization
of the vector bundle $T_{M} \oplus \underline{ \RR^{n-m} }$, where
$\underline{ \RR^{n-m} }$ denotes the trivial bundle (on $M$) of rank $n-m$.
We let $\Bord_{n}^{\fr}$ denote the bordism $n$-category whose $k$-morphisms
are given by $n$-framed $k$-manifolds for $k \leq n$.
\end{definition}

\begin{theorem}[Baez-Dolan Cobordism Hypothesis]\label{bd}
Let $\C$ be a symmetric monoidal $n$-category. Then the construction
$$ F \mapsto F(\ast)$$
induces an injection from the collection of isomorphism classes of
symmetric monoidal functors $F: \Bord_{n}^{\fr} \rightarrow \C$
to the collection of isomorphism classes of objects of $\C$.
\end{theorem}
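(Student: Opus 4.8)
The plan is to establish the statement in two halves: first that $F \mapsto F(\ast)$ is well-defined on isomorphism classes, and second that it is injective. Well-definedness is immediate, so the content is injectivity: given symmetric monoidal functors $F, F' \colon \Bord_n^{\fr} \to \C$ with $F(\ast) \cong F'(\ast)$, I want to produce a monoidal natural isomorphism $F \simeq F'$. The key structural input is that $\Bord_n^{\fr}$ is the \emph{free} symmetric monoidal $n$-category (with duals, and with all the extra coherence data that makes every object fully dualizable) generated by a single point. If one takes this freeness as the definition, injectivity is formal: the two functors agree on the generator up to the chosen isomorphism, and freeness forces them to agree everywhere up to coherent isomorphism. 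So the real work is unwinding what ``free'' means here and checking that $\Bord_n^{\fr}$ has that universal property, which is exactly the hard direction of the cobordism hypothesis and is only sketched in \cite{L}; for the purposes of this exposition I would cite that sketch rather than reproduce it.

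To make the argument concrete at low categorical level (which is all we really need in this paper, where $n \leq 4$), I would proceed as follows. First, recall that any object $X = F(\ast)$ in the image is fully dualizable: the point $\ast$ has a dual $\ast$ with evaluation and coevaluation given by the $1$-framed interval read in two directions, and the cusp/zigzag identities are witnessed by higher morphisms in $\Bord_n^{\fr}$, so applying $F$ equips $X$ with the full tower of duality data. Second, observe that the disjoint-union-of-points objects are, up to the symmetric monoidal structure, generated by $\ast$ and its dual, so $F$ on objects is determined by $X$ and $X^{\vee}$. Third, work up the morphism levels: a $1$-morphism in $\Bord_n^{\fr}$ is a $1$-framed bordism of framed points, and by cutting along a Morse function one writes it as a composite of elementary pieces (births, deaths, the evaluation and coevaluation bordisms); each elementary piece is sent by $F$ to a canonical morphism built from the duality data on $X$, hence is determined by the isomorphism $X \cong X'$. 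Iterating this Morse-theoretic decomposition one dimension at a time — handle decompositions relative to the boundary for $k$-morphisms — shows that $F$ is determined on all $k$-morphisms, $k \leq n$, with the matching of higher coherence morphisms handled by the same freeness.

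The step I expect to be the genuine obstacle is precisely the claim that these handle/Morse decompositions assemble into a proof that $\Bord_n^{\fr}$ \emph{is} the free symmetric monoidal $(\infty,n)$-category on a fully dualizable object — i.e., that there are no relations among framed bordisms beyond those forced by full dualizability, and that all the coherence data matches up. This requires controlling the space of Morse functions / handle decompositions (Cerf theory and its higher analogues) and knowing that the moves relating different decompositions are themselves generated by the duality coherences; this is the technical heart of \cite{L} and I would not attempt it here. A secondary subtlety is that $\Bord_n^{\fr}$ must be treated as an $(\infty,n)$-category, so ``isomorphism class of functor'' means equivalence class, and one must be careful that the space of identifications $F \simeq F'$ is not just nonempty but appropriately contractible if one wants the stronger statement (an \emph{equivalence} of the relevant mapping spaces, as in the full cobordism hypothesis); for the injection claimed here, nonemptiness suffices.

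So in summary: I would state that $\Bord_n^{\fr}$ is freely generated by a fully dualizable object, deduce the injection formally, illustrate the mechanism via Morse-theoretic decomposition of bordisms into duality data in low dimensions, and flag the structure-theory of bordism categories (Cerf theory / handle moves, as carried out in \cite{L}) as the point where the real difficulty lies and which we do not reprove.
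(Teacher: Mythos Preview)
The paper does not prove this theorem: it states the result and immediately refers the reader to \cite{L} for ``a more extensive discussion and a sketch of the proof.'' So there is no proof in the paper to compare against; your proposal, which sketches the freeness-of-$\Bord_n^{\fr}$ mechanism and then defers the hard part to \cite{L}, is already more detailed than what the paper itself offers, and is consistent with how the authors treat the result.
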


A version of Theorem \ref{bd} was originally conjectured by Baez and Dolan in
\cite{BD}. We refer the reader to \cite{L} for a more extensive discussion
and a sketch of the proof.

Let $\C$ be a symmetric monoidal $n$-category. We will say that an object
$C \in \C$ is {\it fully dualizable} if there exists an extended TQFT
$F: \Bord_{n}^{\fr} \rightarrow \C$ and an isomorphism $C \simeq F(\ast)$.
Theorem \ref{bd} asserts that if $C$ is a fully dualizable object of $\C$, then
the field theory $F$ is uniquely determined by~ $C$.

\begin{remark}
It is possible to state a more precise version of Theorem \ref{bd} by
describing the class of fully dualizable objects $C \in \C$ without mentioning
the $n$-category $\Bord_{n}^{\fr}$. If $n=1$, this condition is easy to state:
the object $C$ should admit a dual $C^{\vee}$, so that there exist evaluation and coevaluation
maps
$$ C \otimes C^{\vee} \stackrel{e}{\rightarrow} {\bf 1} \quad \quad { \bf 1} \stackrel{c}{\rightarrow} C^{\vee} \otimes C$$
(here ${\bf 1}$ denotes the unit object of $\C$) which are compatible in the sense that
the compositions
$$ C \rightarrow C \otimes C^{\vee} \otimes C \rightarrow C$$
$$ C^{\vee} \rightarrow C^{\vee} \otimes C \otimes C^{\vee} \rightarrow C^{\vee}$$
both coincide with the identity. For $n > 1$, we need to assume that analogous finiteness
assumptions are satisfied not only by the object $C$, but by the $1$-morphisms $e$ and $c$.
We refer the reader to \cite{L} for a more complete discussion.
\end{remark}

Theorem \ref{bd} has a curious consequence: since the orthogonal group
$\OO(n)$ acts on the $n$-category $\Bord_{n}^{\fr}$ (by change of framing),
we get an induced action of $\OO(n)$ on the collection of fully dualizable
objects of any symmetric monoidal $n$-category $\C$ (more precisely, the
orthogonal group $\OO(n)$ can be made to act on the classifying space for the
underlying $n$-groupoid of the fully dualizable objects in $\C$). When $n=1$,
this action is simply given by the involution that takes a dualizable object
$C \in \C$ to its dual $C^{\vee}$.

We can use the action of the orthogonal group to formulate an analog of Theorem
\ref{bd} for more general types of manifolds:

\begin{theorem}\label{bdo}
Let $\C$ be a symmetric monoidal $n$-category. The construction $F \mapsto F(\ast)$ establishes
a bijection between the set of isomorphism classes of symmetric monoidal functors
$\Bord_{n}^{\SO} \rightarrow \C$ with the set of isomorphism classes of
(homotopy) fixed points for the action of the group $\SO(n)$ on the fully dualizable objects of $\C$.
\end{theorem}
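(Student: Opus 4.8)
The plan is to deduce Theorem~\ref{bdo} from the framed cobordism hypothesis (Theorem~\ref{bd}) by a descent/fixed-point argument. The starting observation is that there is a fibration sequence of $n$-categories $\Bord_n^{\fr}\to\Bord_n^{\SO}$ obtained by quotienting by the action of $\SO(n)$ that rotates framings; more precisely, $\Bord_n^{\SO}$ is the homotopy quotient $\Bord_n^{\fr}/\!/\SO(n)$, because equipping a stably framed $k$-manifold with an orientation is the same as equipping it with an $\SO(n)$-family of $n$-framings, coherently in $k$. (One should be slightly careful: the relevant group is really $\SO(n)$ acting on the $\infty$-groupoid completion, since framings beyond the top dimension are parametrized by homotopies.) Granting this, a symmetric monoidal functor $\Bord_n^{\SO}\to\C$ is the same datum as an $\SO(n)$-equivariant symmetric monoidal functor out of $\Bord_n^{\fr}$, i.e.\ a homotopy-fixed point for the induced $\SO(n)$-action on the space $\operatorname{Fun}^{\otimes}(\Bord_n^{\fr},\C)$.

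First I would make precise the $\OO(n)$-action on $\Bord_n^{\fr}$ alluded to before the theorem, and identify the homotopy orbit $\infty$-category-theoretically: the claim is that the natural map $\Bord_n^{\fr}/\!/\OO(n)\to\Bord_n^{\OO}$ (unoriented bordism) is an equivalence of symmetric monoidal $n$-categories, and restricting to the index-two subgroup $\SO(n)\subset\OO(n)$ gives $\Bord_n^{\fr}/\!/\SO(n)\simeq\Bord_n^{\SO}$. This is where the real content sits; it is essentially a parametrized version of the framed cobordism hypothesis and is proved in \cite{L}. Second, I would combine this with Theorem~\ref{bd}: that theorem identifies $\operatorname{Fun}^{\otimes}(\Bord_n^{\fr},\C)$ (as an $\infty$-groupoid, i.e.\ after discarding non-invertible natural transformations) with the $\infty$-groupoid $\C^{\mathrm{fd}}$ of fully dualizable objects of $\C$, \emph{equivariantly} for the $\OO(n)$-actions on both sides — the $\OO(n)$-action on the target being exactly the one described in the paragraph preceding the theorem (change of framing transported across the equivalence). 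Third, taking $\SO(n)$-homotopy fixed points of this equivariant equivalence yields
$$
\operatorname{Fun}^{\otimes}(\Bord_n^{\SO},\C)
\;\simeq\;
\operatorname{Fun}^{\otimes}(\Bord_n^{\fr},\C)^{h\SO(n)}
\;\simeq\;
\bigl(\C^{\mathrm{fd}}\bigr)^{h\SO(n)},
$$
and passing to $\pi_0$ gives the asserted bijection on isomorphism classes.

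The main obstacle is the first step: establishing that $\Bord_n^{\SO}$ genuinely is the homotopy quotient $\Bord_n^{\fr}/\!/\SO(n)$ in the $(\infty,n)$-categorical sense, with all higher coherences, and that the equivalence of Theorem~\ref{bd} can be upgraded to an $\OO(n)$-equivariant statement rather than a bare equivalence of $\infty$-groupoids. Both points require the full machinery of \cite{L} — in particular a careful treatment of tangential structures as maps $X\to BO(n)$ and of the resulting bordism categories $\Bord_n^{(X,\xi)}$, together with the principle that $\Bord_n^{(X,\xi)}$-field theories valued in $\C$ are computed by the homotopy fixed points (or the $X$-valued local systems) of the $\OO(n)$-action on $\C^{\mathrm{fd}}$. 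Since the excerpt permits us to assume Theorem~\ref{bd}, and the parametrized refinement is part of the same package, the remaining arguments are formal: they amount to the adjunction between homotopy quotients and homotopy fixed points in an $\infty$-category of symmetric monoidal functors, plus the observation that $\SO(n)=\mathrm{SO}(n)$ is connected so that an $\SO(n)$-fixed point structure is extra data (not a property) on a fully dualizable object, which is why the statement is a bijection of isomorphism classes rather than an inclusion. We refer to \cite{L} for the details of the construction of the $\OO(n)$-action and the proof that the quotient recovers $\Bord_n^{\SO}$.
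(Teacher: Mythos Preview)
The paper does not actually supply a proof of Theorem~\ref{bdo}: it is stated without argument, in the same spirit as Theorem~\ref{bd}, with the reader referred to~\cite{L} for the underlying machinery. Your proposal is therefore not competing with any argument in the paper itself; rather, you have sketched the standard deduction of the oriented statement from the framed one via the identification $\Bord_n^{\SO}\simeq \Bord_n^{\fr}/\!/\SO(n)$ and the quotient/fixed-point adjunction, which is precisely the route taken in~\cite{L}. In that sense your outline is correct and faithful to the source the paper is invoking.

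One small comment: you are right to flag that the substantive content lies entirely in the first step (identifying the oriented bordism category as the homotopy quotient, and upgrading Theorem~\ref{bd} to an $\OO(n)$-equivariant equivalence). Since the paper treats both Theorems~\ref{bd} and~\ref{bdo} as imported results, it would be appropriate in this context simply to cite~\cite{L} rather than attempt an independent argument.
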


\begin{remark}
Theorem \ref{bdo} has an obvious analog for other types of manifolds: unoriented manifolds, spin manifolds, and so forth.
\end{remark}

	\begin{example}\label{alg}
 Let $n=2$, and let $\C$ denote the $2$-category $\Alg$ of (complex)
algebras, bimodules, and intertwiners. Every object of $\C$ admits a dual:
the dual of an algebra $A$ is the opposite algebra $A^{op}$, where both the
evaluation and coevaluation maps are given by $A$ (regarded as a $({\mathbb
C}, A \otimes A^{op})$-bimodule.  An algebra $A$ is fully dualizable if and
only if $A$ is dualizable both as a ${\mathbb C}$-module and as an $A \otimes
A^{op}$-module. The first condition amounts to the requirement that $A$ be
finite dimensional over ${\mathbb C}$, while the second condition requires
that the algebra $A$ be semisimple.

The circle group $\SO(2)$ acts on the classifying space of the $2$-groupoid
of fully dualizable objects of $\Alg$. In more concrete terms, this means
that every fully dualizable object $A \in \Alg$ determines a functor from the
fundamental $2$-groupoid of $\SO(2)$ into $\Alg$ which carries the identity
element of $\SO(2)$ to $A$. Applying this to a generator of the fundamental
group $\pi_1 \SO(2)$, we get an automorphism of $A$ in $\Alg$: this
automorphism is given by the vector space dual $A^{\vee}$, regarded as an
$(A,A)$-bimodule. To realize $A$ as a fixed point for the action of $\SO(2)$,
we need to choose an identification of $A$ with $A^{\vee}$ as
$(A,A)$-bimodules. In other words, we need to choose a nondegenerate bilinear
form $b: A \otimes A \rightarrow {\mathbb C}$ which satisfies the relations
$$ b(xa,a') = b(a,a'x) \quad \quad b(ax, a') = b(a,xa').$$
If we set $\tr(a) = b(1,a)$, then the first condition shows that $b(a,a') =
\tr(a'a)$, while the second condition shows that $b(a,a') = \tr(aa')$.  It
follows that $\tr$ is a {\it trace} on the algebra $A$: that is, it vanishes
on all commutators $[a,a'] = aa' - a'a$. Conversely, given any linear map
$\tr: A \rightarrow {\mathbb C}$ which vanishes on all commutators, the
formula $b(a,a') = \tr(aa')$ defines a bilinear form $b$ giving a map of
$(A,A)$-bimodules $A \rightarrow A^{\vee}$. We say that $\tr$ is {\it
nondegenerate} if this map is an isomorphism. A pair $(A, \tr)$ where $A$ is
a finite dimensional algebra over ${\mathbb C}$ and $\tr$ is a nondegenerate
trace on $A$ is called a {\it Frobenius algebra}.

We can summarize the above discussion as follows: giving a fully dualizable
object of $\Alg$ which is fixed under the action of the group $\SO(2)$ is
equivalent to giving a semisimple Frobenius algebra $(A,\tr)$. Theorem
\ref{bdo} implies that every such pair $(A,\tr)$ determines an extended TQFT
$F: \Bord_{2}^{\SO} \rightarrow \Alg$ such that $F(\ast) \simeq A$.  The
value of $F$ on the circle $S^1$ can be identified with the $( {\mathbb C},
{\mathbb C})$-vector space given by the tensor product $A \otimes_{ A \otimes
A^{op} } A$: in other words, the quotient of $A$ by the subspace $[A,A]$
generated by all commutators. Using the self-duality of $F(S^1)$ provided by
the field theory $F$ and the self-duality of $A$ provided by the bilinear
form $b$, we can also identify $F(S^1)$ with the orthogonal $[A,A]^{\perp}$
of $[A,A]$ with respect to $b$: that is, with the center of the algebra $A$.

In particular, if $G$ is a finite group and $\eta \in H^3(G; \ZZ)$ is a
cohomology class, then the twisted group algebra $\tCG$ admits a canonical
trace (obtained by taking the coefficient of the unit in $G$ and dividing by
the order of $G$).  Applying Theorem \ref{bdo} in this case, we obtain
another construction of the topological field theory described
in~\ref{subsec:1.3}.
	\end{example}

  \section{Finite path integrals}\label{sec:5}

 We sketch an idea to construct extended field theories via finite sums.

        \begin{remark}[]\label{thm:20}
 Integration over a finite field, such as a gauge field with finite gauge
group, sometimes occurs in a quantum field theory with other fields.  These
cases also fit into this framework.  Examples include orbifolds and
orientifolds in string theory.
        \end{remark}

As
in~\cite[\S3.2]{L} let $\famn$ denote the $n$-category whose objects are
finite groupoids~$X$.  (A groupoid~$X$ is finite if there is a finite number
of inequivalent objects and each object has a finite automorphism group.)  A
1-morphism $C\:X\to Y$ between finite groupoids is a correspondence
  \begin{equation}\label{eq:36}
     \xymatrix{&C\ar[dl]_{p_1}\ar[dr]^{p_2}\\X&&Y} 
  \end{equation}
of finite groupoids.  A 2-morphism in $\Fam_n$ is a correspondence of
1-morphisms, and so forth until the level $n$; we regard two $n$-morphisms in
$\Fam_n$ as identical if they are equivalent.  Composition is homotopy fiber
product. 
Cartesian product of groupoids endows~ $\famn$ with a symmetric monoidal structure.  There is a
symmetric monoidal functor
  \begin{equation}\label{eq:37}
     \BunG\:\BordSO_n\longrightarrow \famn 
  \end{equation}
which attaches to each manifold~$M$ the finite groupoid of $G$-bundles
on~$M$.  This is the space of classical fields on~$M$.  (The
functor~\eqref{eq:37} replaces the category~$\bordSO_n(G)$ which appears in
our previous formulation in the paragraph following~\eqref{eq:32}.)  Let
$\C$~be a symmetric monoidal $n$-category and $\famn(\C)$~the symmetric
monoidal $n$-category of correspondences equipped with local systems valued
in~$\C$: for example, an object of~$\famn(\C)$ is a finite groupoid~$X$ and a
functor $X\to\C$, and morphisms are also equipped with functors to~$\C$ (as
before, $\C$ denotes the codomain of our field theories).  Then the classical
theory is encoded by a functor~$I$ which fits into a commutative diagram
  \begin{equation*}
     \xymatrix{\BordSO_n\ar[rr]^I \ar[dr]_{\BunG}&&\famn(\C) \ar[dl]\\ &\famn}
  \end{equation*}
where the right arrow is the obvious forgetful functor.   

        \begin{remark}[]\label{thm:25}
 For a \emph{classical} theory the values of~$I$ lie in the \emph{invertible}
objects and morphisms of~$\C$.  The finite sum construction does not depend
on the invertibility, so will apply in the situations envisioned in
Remark~\ref{thm:20}.  
        \end{remark}

        \begin{example}[]\label{thm:24}
 We recast the discussion in~\ref{subsec:2.1} in these terms. As a
preliminary suppose $\B,\C$~are 2-categories.  We describe the data of a
functor~$I\:\B\to\C$.  Each 2-category consists of objects (ob), 1-morphisms
(1-mor), and 2-morphisms (2-mor) and the data which defines $I$~includes a
map between these corresponding collections in~$\B$ and~$\C$.  For
\emph{strict} 2-categories these maps are required to respect the composition
strictly.  But for more general 2-categories there are two additional pieces
of data.  First, there is a map $u\:\ob(\B)\to \2-mor(\C)$ which for each
object $X\in \ob(\B)$ gives a 2-morphism $u(X)\:\id_{I(X)}\to I(\id_{X})$.
Second, there is a map $a\:\1-mor(\B)\times
_{\ob(\B)}\1-mor(\B)\to\2-mor(\C)$ which expresses the failure of $I$~on
1-morphisms to be a strict homomorphism, namely $a(x,y)\:I(y)\circ I(x)\to
I(y\circ x)$ for every pair~$\cdot \xrightarrow{x}\cdot \xrightarrow{y}\cdot
$ of composable 1-morphisms in~$\B$.  These data are required to obey a
variety of axioms.  For example, if $\cdot \xrightarrow{x}\cdot
\xrightarrow{y}\cdot \xrightarrow{z}\cdot $~is a triple of composable
morphisms, then
  \begin{equation}\label{eq:44}
     a(y\circ x,z)\circ \{a(x,y)*\id\mstrut _{I(z)}\}= a(x,z\circ y)\circ
     \{\id\mstrut _{I(x)}*\,a(y,z)\}.  
  \end{equation}

As in Example~\ref{alg}, let $\C=\Alg$~be the 2-category whose objects are
complex algebras~$A$.  A morphism $A\to A'$ is an $\bimod {A'}A$-bimodule and
a 2-morphism is a homomorphism of bimodules.  The symmetric monoidal
structure is given by tensor product over~$\CC$. Note that the unit object
in~$\C$ is the algebra~$\CC$ and a $\bimod{\CC}{\CC}$-bimodule is simply a
vector space.  Thus the category of 1-morphisms $\CC\to\CC$ is the category
$\Vect_{\CC}$ of complex vector spaces.  Now for the finite 2-dimensional
gauge theory based on the central extension $\TT\to \tG\to G$, the
functor~$\BunG$ assigns to the point~$\pt_+$ the groupoid~$*\gpd G$.  The
lift~$I$ includes the functor
  \begin{equation}\label{eq:46}
      \chi \:*\gpd G\to\Alg 
  \end{equation}
which assigns the (invertible) algebra~$\CC$ to the unique object~$*$, the
complex line~$K_x$ to the 1-morphism~$x\in G$, and the identity map to the
identity 2-morphisms in~$*\gpd G$.  (There are only identity 2-morphisms
in~$*\gpd G$.)  The map~$u$ in the previous paragraph is then the identity.
The map~$a$ in the previous paragraph assigns to each pair~$x,y$ of group
elements the isomorphism~\eqref{eq:28}, viewed as a 2-morphism in~$\C$.  The
associativity constraint~\eqref{eq:44} is~\eqref{eq:29}.
        \end{example}

The quantization via a finite sum will be implemented by a symmetric monoidal
functor of $n$-categories (see~\S\ref{cs8})
  \begin{equation}\label{eq:39}
     \Sum_n\:\famn(\C)\longrightarrow \C 
  \end{equation}
Given~\eqref{eq:39} we simply define the quantum theory as the composition
  \begin{equation}\label{eq:3.9}
     F\:\BordSO_n\xrightarrow{\quad I\quad
     }\famn(\C)\xrightarrow{\;\;\Sum_n\;\;}\C  
  \end{equation}
The functor $\Sum_n$ depicted in~\eqref{eq:39} is given by a purely categorical
procedure: if $X$ is a finite groupoid---an object in~$\famn$---and $\chi: X
\rightarrow \C$ is a $\C$-valued local system on $X$, then $\Sum_n(X, \chi)$ is
given by the colimit $\varprojlim_{x \in X} \chi(x)$.  To guarantee that this
formula describes a well-defined functor from $\famn(\C)$ to $\C$, we need to
make certain assumptions on $\C$: namely, that it is additive in a strong
sense which guarantees that the colimit $\varinjlim_{x \in X} \chi(x)$ exists
and coincides with the limit $\varprojlim_{x \in X} \chi(x)$.

        \begin{remark}[]\label{thm:31}
 This discussion goes through if we replace~$\famn$ by the $n$-category whose
objects are finite $n$-groupoids (see~\S\ref{cs8}).  We will use the
generalization to~$n=2$ in~\ref{subsec:6.1}.
        \end{remark}

        \begin{example}[]\label{thm:26}
 To illustrate the idea consider the 1-dimensional gauge theory
of~\S\ref{sec:1} for a finite gauge group~$G$.  Recall it is specified by an
abelian character~$\lambda \:G\to\TT$.  In this case let $\C$~be the
symmetric monoidal category~$\Vect_{\CC}$ with tensor product.  Then the
``classical'' functor $I\:\BordSO_1\to\Fam_1(\Vect_{\CC})$ sends the
point~$\pt_+$ to the functor $*\gpd G\to\Vect_{\CC}$ which sends~$*$ to~$\CC$
and is the homomorphism~$\lambda $ on morphisms.  The path
integral~\eqref{eq:39} is defined as follows.  If $X\in \Fam_1(\Vect_\CC)$ is
a finite groupoid equipped with a functor~$\chi \:X\to\Vect_{\CC}$ then
$\Sum_1(X,\chi )\in \Vect_{\CC}$ is the limit
  \begin{equation}\label{eq:41}
     \Sum_1(X,\chi ) = \varprojlim\limits_{x\in X} \chi (x) 
  \end{equation}
If the finite groupoid $X=\xymatrix@1{X_0 & \ar@<.5ex>[l] \ar@<-.5ex>[l]
X_1}$~is finitely presented ($X_0$~and $X_1$ are finite sets), then
$\chi $~determines an equivariant vector bundle over~$X_0$ and the
limit~\eqref{eq:41} is the vector space of invariant sections.  On the other
hand, a morphism in~$\Fam_1(\Vect_{\CC})$ is given by a
correspondence~\eqref{eq:36} of finite groupoids; functors $\chi
\:X\to\Vect_{\CC}$ and $\delta \:Y\to\Vect_{\CC}$; and for each~$c\in C$ a
linear map $\varphi (c)\:\chi \bigl(p_1(c) \bigr)\to \delta \bigl(p_2(c)
\bigr)$.  We define a map
  \begin{equation*}
     \Sum_1(C,\varphi )\:\Sum_1(X,\chi )\longrightarrow \Sum_1(Y,\delta ) 
  \end{equation*}
Assume $X,Y,C$~are finitely presented.  Given~$x\in X$ let $C_x$~be the
sub-groupoid of~$C$ consisting of $c\in C$ such that $p_1(c)=x$.  Then  
  \begin{equation}\label{eq:43}
     \Sum_1(C,\varphi ) = \sum\limits_{[c]\in \pi _0C_x}\frac{\varphi
     (c)}{\#\Aut(c)} 
  \end{equation}
where the sum is over equivalence classes of objects in~$C_x$.
(Compare~\eqref{eq:31}.)  The reader can check that \eqref{eq:41}
and~\eqref{eq:43} reproduce the results in the paragraph preceding
Remark~\ref{thm:4} in the 1-dimensional finite gauge theory.
        \end{example}


        \begin{example}[]\label{thm:27}
 We continue Example~\ref{thm:24} and content ourselves with the computation
of~$F(\pt_+)$ as a limit in the 2-category~$\C=\Alg$:  
  \begin{equation*}
     F(pt_+) = \varprojlim\limits_{*\gpd G} \chi , 
  \end{equation*}
where $\chi $~is the functor~\eqref{eq:46}.  Unraveling the definitions, this
limit is given by an algebra~$A$ with the following universal property: for any
algebra $B$, category of $(A,B)$-bimodules is equivalent to the category of
right $B$-modules $M$ equipped with a compatible family of $B$-module isomorphisms
$\{ K_{x} \otimes M \simeq M \}_{x \in G}$.  This limit can be represented by the twisted group algebra
  \begin{equation*}
     A=\tCG = \bigoplus \limits_{x\in G}K_x.
  \end{equation*}
 
We leave as an exercise to the reader the computation of~$F(\cir)$ as a
(1-categorical) limit over the groupoid~$G\gpd G$ of $G$-bundles on~$\cir$.
The argument is similar to Example~\ref{thm:26}.
        \end{example}

  \section{Three-dimensional theories with finite gauge group}\label{sec:3} 

 \subsection{$H^4(BG; {\mathbb Z})$ for finite~$G$}\label{subsec:1.4}
 
Let $G$~be a finite group.   

        \begin{definition}[]\label{thm:11}
  A \emph{2-cocycle} on~$G$ with values in hermitian lines is a
pair~$(\tK,\ttheta )$ consisting of a hermitian line bundle $\tK\to G\times
G$, for each triple $x,y,z\in G$ an isometry
  \begin{equation}\label{eq:4}
     \theta \mstrut _{x,y,z}\:\mstrut K_{y,z}\otimes K_{xy,z}\inv \otimes
     K\mstrut _{x,yz}\otimes K_{x,y}\inv \longrightarrow \CC 
  \end{equation}
and a cocycle condition 
  \begin{equation}\label{eq:11}
     \theta \mstrut _{y,z,w}\,\theta _{xy,z,w}\inv \,\theta \mstrut
     _{x,yz,w}\,\theta _{x,y,zw}\inv \,\theta \mstrut _{x,y,z}= 1,\qquad
     x,y,z,w\in G. 
  \end{equation}
for each quadruple of elements of~$G$.   
        \end{definition}

        \begin{proposition}[]\label{thm:7}
 For $G$~finite the cohomology group~$H^4(BG; \ZZ)$ is the set of isomorphism
classes of 2-cocycles~$(\tK,\ttheta)$ on~$G$ with values in hermitian lines.
        \end{proposition}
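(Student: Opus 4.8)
The plan is to recognize $H^4(BG;\ZZ)\cong H^3(BG;\TT)$ (via the exponential sequence, since $H^k(BG;\RR)=0$ for $k>0$ when $G$ is finite) and then to identify the right-hand side with isomorphism classes of the ``categorified line bundles with connection'' described by the data $(\tK,\ttheta)$. The key observation is that a $2$-cocycle in the sense of Definition~\ref{thm:11} is exactly a cocycle for the sheaf of hermitian-line-valued functions on the simplicial model~\eqref{eq:3} of $BG$: the line bundle $\tK\to G\times G=S_2$ is a $2$-cochain, the isometry~\eqref{eq:4} trivializes the alternating tensor product of its pullbacks to $S_3=G^3$ (so it is a coboundary-type datum one categorical level up), and~\eqref{eq:11} asserts that the induced trivialization of the alternating product of pullbacks to $S_4=G^4$ is the canonical one. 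Thus $(\tK,\ttheta)$ is precisely a degree-$3$ cocycle valued in the Picard groupoid of hermitian lines, and the isomorphism classes of such form the degree-$3$ cohomology of $BG$ with coefficients in the chain complex $[\,\TT \to 0\,]$ placed so that $\TT$ sits in the appropriate degree—equivalently $H^3(BG;\TT)\cong H^4(BG;\ZZ)$. This is the exact analog, one categorical step up, of the description in \S\ref{subsec:2.1} of $H^3(BG;\ZZ)$ via line bundles on $S_1=G$ with the trivialization~\eqref{eq:28} on $S_2$ constrained by~\eqref{eq:29} on $S_3$; and of Proposition~\ref{thm:1} via functions on $S_1$ one level below that.

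First I would set up the hypercohomology spectral sequence (or simply the total complex) for the simplicial space~\eqref{eq:3} with coefficients in the two-term complex of sheaves $\mathcal{C}^\infty(-,\TT)\to 0$, using that all the $S_p=G^p$ are discrete finite sets so the sheaf cohomology in each simplicial degree is concentrated in degree zero. The $E_1$ (or $E_2$) page then collapses to the cohomology of the cochain complex of the groupoid $*\gpd G$ with $\TT$-coefficients, i.e.\ to group cohomology $H^\bullet(G;\TT)$, and one reads off $H^3_{\mathrm{simp}}=H^3(G;\TT)\cong H^4(BG;\ZZ)$. Next I would translate the abstract cocycle/coboundary data into the geometric language of Definition~\ref{thm:11}: a representing cocycle is a $\TT$-valued function on $G^3$, which I reinterpret as a trivialized hermitian line bundle on $G^2$ together with the gluing data; the cocycle condition on $G^4$ is~\eqref{eq:11}; and a coboundary is exactly an isomorphism of such structures (a line bundle on $G$ inducing the discrepancy on $G^2$). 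Checking that ``isomorphism of $2$-cocycles'' as understood in the statement matches ``differ by a coboundary'' is the bookkeeping core of the argument. Finally I would note, as in \S\ref{subsec:2.1}, that one may instead invoke the cited references \cite[Proposition~6.3]{AS}, \cite[Part~1,\S2.2.1]{FHT}, \cite[Appendix]{Bry}, which establish the analogous classification results and whose methods apply verbatim one categorical level higher.

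The main obstacle is purely organizational rather than conceptual: one must be careful about the signs/alternations implicit in the ``alternating tensor product'' appearing in~\eqref{eq:4} and~\eqref{eq:11}, and about the fact that hermitian lines form a Picard \emph{groupoid} (not a group), so ``cochains'' must be interpreted up to coherent isomorphism and the cocycle/coboundary relations hold only after choosing such isomorphisms. Making precise that the coherence data is itself unique (so that no higher ambiguity leaks in) is what requires the mild finiteness of $G$—it is exactly this that forces $H^k(BG;\RR)=0$ for $k>0$ and hence makes the two-term truncation $[\TT\to 0]$ compute what $\ZZ[1]$ would. I would handle this by working throughout with the honest simplicial cochain complex of~\eqref{eq:3} valued in the Picard groupoid, so that all coherences are tracked simplicially and the final identification with $H^4(BG;\ZZ)$ is a formal consequence of the exponential sequence.
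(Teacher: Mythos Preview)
Your proposal is correct, but the paper's proof is far more elementary and direct than the hypercohomology/Picard-groupoid framework you set up. The paper does begin, as you do, by invoking the Bockstein isomorphism $H^3(BG;\TT)\xrightarrow{\cong} H^4(BG;\ZZ)$ from the exponential sequence (using $H^4(BG;\RR)=0$). But then, instead of interpreting $(\tK,\ttheta)$ as a cocycle valued in a Picard groupoid and running a spectral sequence, the paper simply \emph{chooses} unit-norm elements $k_{x,y}\in K_{x,y}$ and writes down the explicit $\TT$-valued $3$-cocycle
\[
\omega_{x,y,z}=\theta_{x,y,z}\bigl(k_{y,z}\,k_{xy,z}^{-1}\,k_{x,yz}\,k_{x,y}^{-1}\bigr),
\]
then checks by hand that the resulting class in $H^3(BG;\TT)$ is independent of the choices and of the isomorphism class of $(\tK,\ttheta)$, that the map is injective, and that surjectivity is obvious (take $K_{x,y}=\CC$ and $\theta_{x,y,z}=\omega_{x,y,z}$). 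Your approach has the virtue of making the structural reason transparent and of generalizing cleanly to higher degrees or non-discrete $G$; the paper's approach has the virtue of being a three-line computation requiring no machinery beyond the definition of a group cocycle. The ``organizational obstacle'' you worry about---tracking coherences in the Picard groupoid---simply evaporates in the paper's approach, because choosing the sections $k_{x,y}$ rigidifies everything at once.
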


        \begin{proof}
 Since $H^4(BG;\RR)=0$, the Bockstein homomorphism $H^3(BG;\TT)\to
H^4(BG;\ZZ)$ from the exponential sequence of coefficients is an
isomorphism.\footnote{Here the circle~$\TT$ has either the discrete topology
or the continuous topology.}  Given~$(\tK,\ttheta)$ choose $k_{x,y}\in
K_{x,y}$ of unit norm.  Then
  \begin{equation*}
     \omega \mstrut _{x,y,z}=\theta \mstrut _{x,y,z}(k\mstrut
     _{y,z}\,k_{xy,z}\inv \,k\mstrut _{x,yz}\,k_{x,y}\inv )
  \end{equation*}
is a 3-cocycle with values in~$\TT$.  A routine check shows that the
resulting element of~$H^3(BG;\TT)$ is independent of~$\{k_{x,y}\}$ and of the
representative ~$(\tK,\ttheta)$ in an equivalence class.  Also, the resulting
map from equivalence classes of 2-cocycles to~$H^3(BG;\TT)$ is an injective
homomorphism.  Surjectivity is also immediate: if $\omega _{x,y,z}$~is a
cocycle, set $K_{x,y}=\CC$ and $\theta _{x,y,z}=\omega _{x,y,z}$.
        \end{proof}

 Suppose $(\tK,\ttheta)$~is given.  Define the line bundle $L\to G\times G$
by
  \begin{equation}\label{eq:6}
     L\mstrut _{x,y} = K^*_{yxy\inv ,y}\otimes K\mstrut _{y,x}\;. 
  \end{equation}
The cocycle isomorphism~\eqref{eq:4} leads to an isomorphism 
  \begin{equation*}
     L_{yxy\inv,y' }\otimes L_{x,y}\longrightarrow L_{x,y'y} 
  \end{equation*}
which is summarized by the statement that $L$~is a line bundle over the
groupoid~$G\gpd G$ formed by the $G$~action on itself by conjugation.
In~\ref{subsec:1.5} the line bundle~$K$ enters into the quantization of a
point and the line bundle~$L$ enters into the quantization of the circle.

 \subsection{Three-dimensional finite gauge theory}\label{subsec:1.5}

Let $G$~be finite and fix a 2-cocycle~$(\tK,\ttheta)$ with values in
hermitian lines.  Let $\tVect[G]$~be the category whose objects are complex
vector bundles over~$G$ and morphisms are linear vector bundle maps.  Define
a monoidal structure on~$\tVect[G]$ by twisted convolution: if $W,W'\to G$
are vector bundles set
  \begin{equation*}
     (W* W')_y = \bigoplus\limits_{xx'=y} K\mstrut _{x,x'}\otimes W\mstrut
     _x\otimes W'_{x'}. 
  \end{equation*}
Then $\tVect[G]$~is a linear tensor category.   

        \begin{remark}[]\label{thm:32}
 One way to regard this
category is as the ``crossed product'' $T\ltimes \vect$, with the tensor
category $\vect$ replacing the customary algebra with $T$-action. (Actions of
$T$ on $\vect$, as a tensor category, assign to each $t\in T$ an invertible
$(\vect,\vect)$-bimodule, which must of course be isomorphic to $\vect$.  The
associator for the action is a $2$-cocycle on $T$ with values in $\pic$, the
group of units in $\vect$; and associators for equivalent actions differ by a
co-boundary. Our action is classified by $\tau$, as per the discussion
in~\ref{subsec:1.4}.)
        \end{remark}

There is a TQFT~$\tF$ such that $\tF(\pt)=\tVect[G]$ and it can be
constructed in several ways.  First, and most directly, we can construct it
as a 0-1-2-3~theory directly using the finite sum path integral
in~\S\ref{sec:5}.  Secondly, following Reshetikhin and Turaev~\cite{T}, as a
1-2-3 theory it may be constructed by specifying the \emph{modular tensor
category} which is attached to a circle; it appears in
Proposition~\ref{thm:15} below. A third approach is to realize $\tVect[G]$ as
a fully dualizable object of a symmetric monoidal $3$-category $\C$. The Baez-Dolan cobordism hypothesis then implies that $\tVect[G]$ determines a 0-1-2-3~theory which is defined on
framed $3$-manifolds. To remove the dependence on a choice of framing, we should go further and exhibit $\tVect[G]$ as an {\it $SO(3)$-equivariant} fully dualizable object. For present purposes, we will be content to sketch a definition of the relevant $3$-category $\C$ and to give some hints at what the relevant finiteness conditions correspond to.

 \begin{definition}\label{canst} The $3$-category $\C$ can be described
informally as follows:
 \begin{itemize}
 \item[$(a)$] The objects of $\C$ are tensor categories over~$\CC$: in other
words, $\CC$-linear categories equipped with a tensor product operation which
is associative up to coherent isomorphism.
 \item[$(b)$] Given a pair of tensor categories $\A$ and $\A'$, a
$1$-morphism from $\A$ to $\A'$ in $\C$ is an {\it $\A$-$\A'$ bimodule
category}: that is, a $\CC$-linear category $\D$ equipped with a left action
$\A \times \D\to\D$ and a right action $\D \times \A'\to\D$ which commute
with one another, up to coherent isomorphism.
 \item[$(c)$] Given a pair of tensor categories $\A$ and $\A'$ and a pair of
bimodule categories $\D$ and $\D'$, a $2$-morphism from $\D$ to $\D'$ in $\C$
is a functor between bimodule categories: that is, a functor $F: \D
\rightarrow \D'$ which commutes with the actions of $\A$ and $\A'$ up to
coherent isomorphism.
 \item[$(d)$] Given a pair of tensor categories $\A$ and $\A'$, a pair of
bimodule categories $\D$ and $\D'$, and a pair of bimodule category functors
$F,F': \D \rightarrow \D'$, a $3$-morphism from $F$ to $F'$ in $\C$ is a
natural transformation $\alpha: F \rightarrow F'$ which is compatible with
the coherence isomorphisms of $(c)$.
 \end{itemize}
 \end{definition}

The category $\tVect[G]$ is a fully dualizable object of $\C$. Roughly
speaking, the verification of this takes place in three (successively more
difficult) steps. First, we verify that $\tVect[G]$~ is dualizable: in other
words, that it is a fully dualizable object of the underlying $1$-category of
$\C$ (and therefore gives rise to a $1$-dimensional field theory). This is
completely formal: every tensor category $\A$ is a dualizable object of $\C$,
the dual being the same category with the opposite tensor product. The next
step is to verify that $\tVect[G]$ is a fully dualizable object of the
underlying $2$-category of $\C$ (and therefore gives rise to a
$2$-dimensional field theory, which assigns vector spaces to surfaces). This
is a consequence of the fact that $\tVect[G]$ is a {\it rigid} tensor
category: that is, every object of $X \in \tVect[G]$ has a dual (given by
taking the dual vector bundle $X^{\vee}$ of $X$ and pulling back under the
inversion map $g \mapsto g^{-1}$ from $G$ to itself). Finally, to get a
$3$-dimensional field theory, we need to check that $\tVect[G]$ satisfies
some additional $3$-categorical finiteness conditions which we will not spell
out here. (However, we should remark that these $3$-categorical finiteness
conditions are in some sense the most concrete, and often amount to the
finite dimensionality of various vector spaces associated to $\tVect[G]$: for
example, the vector spaces of morphisms between objects of $\tVect[G]$.)  The
paper~\cite{BW} is presumably relevant to the full dualizability
of~$\tVect[G]$.

        \begin{definition}[]\label{thm:14}
  Let $\A$~be a monoidal category with product~$*$.  Its \emph{(Drinfeld)
center}~$Z(\A)$ is the category whose objects are pairs~$(X,\epsilon_X )$
consisting of an object~$X$ in~$\A$ and a natural isomorphism $\epsilon_X
(-)\:X* -\to -* X$.  The isomorphism~$\epsilon_X $ is compatible with the
monoidal structure in that for all objects~$Y,Z$ in~$\A$ we require
  \begin{equation*}
     \epsilon_X (Y* Z)= \bigl(\id_Y* \epsilon_X (Z) \bigr)\circ
     \bigl(\epsilon_X (Y)* \id_Z \bigr). 
  \end{equation*}
        \end{definition}
\indent 
 The center~$Z(\A)$ of any monoidal category~$\A$ is a \emph{braided}
monoidal category.  M\"uger~\cite{M} proves that if~$\A$ is a linear tensor
category over an algebraically closed field which satisfies certain
conditions, then $Z(\A)$~is a modular tensor category.  That applies to
part~(ii) of the following result.

 \begin{proposition}[]\label{thm:15}
 \begin{enumerate}[(i)]
 \item The value $\tF(\cir)$ of the field theory~$\tF$ on the circle is the
center of the monoidal category~$\tVect[G]$.
 \item The center of~$\tVect[G]$ consists of twisted equivariant vector
bundles $W\to G$, that is, vector bundles with a twisted lift
  \begin{equation}\label{eq:24}
     L_{x,y}\otimes W_x\longrightarrow W_{yxy\inv } 
  \end{equation}
of the $G$-action on~$G$ by conjugation, where $L\to G\times G$ is defined
in~\eqref{eq:6}. 
 \end{enumerate} 
        \end{proposition}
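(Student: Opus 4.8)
The plan is to prove the two parts by first identifying $\tF(\cir)$ abstractly and then unraveling the abstract description into the concrete one. For part (i), I would use the general principle---valid for any fully extended $3$-dimensional theory realized through the cobordism hypothesis with $\tF(\pt)=\A$ a tensor category in the $3$-category $\C$ of Definition~\ref{canst}---that the circle, viewed as the composite of the coevaluation $\pt\to\pt\amalg\pt^{\vee}$ and evaluation $\pt\amalg\pt^{\vee}\to\pt$ (equivalently, $\cir$ as the ``trace'' of the identity bordism of the point), gets sent to the Hochschild-homology-type object $\A\otimes_{\A\boxtimes\A^{\textnormal{op}}}\A$ in the underlying $2$-category of $\C$. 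Since the dual of a tensor category is the same category with opposite product, and since the relevant bicategory is the Morita $2$-category of tensor categories and bimodule categories, this self-tensor-product is precisely the Drinfeld center $Z(\A)$: this is the categorified analog of the fact that, in Example~\ref{alg}, $A\otimes_{A\otimes\Aop}A$ is the center of the algebra $A$. So $\tF(\cir)\simeq Z(\tVect[G])$. The one point requiring care is that one must use the $SO(3)$-equivariant (oriented) structure so that the identification of $\tF(\cir)$ is the one compatible with the standard orientation of the circle; but for the purposes of identifying the underlying category this subtlety does not intervene.

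For part (ii), the task is purely a computation of $Z(\tVect[G])$ from Definition~\ref{thm:14}. An object of $\tVect[G]$ is a complex vector bundle $W\to G$, with fibers $W_x$; and the monoidal structure is twisted convolution $(W* W')_y=\bigoplus_{xx'=y}K_{x,x'}\otimes W_x\otimes W'_{x'}$. A central structure on $W$ is a natural isomorphism $\epsilon_W(-)\:W* -\xrightarrow{\ \cong\ } -* W$. Plugging in the generating objects $\delta_y$ (the skyscraper line at $y\in G$, say with fiber $\CC$, or more precisely with whatever fiber makes the bookkeeping clean), naturality forces $\epsilon_W$ to be determined by a collection of isomorphisms between the fibers $(W* \delta_y)$ and $(\delta_y* W)$, i.e. between $K_{x,y}\otimes W_x$ and $K_{yxy\inv,y}\otimes W_{yxy\inv}$ where $x$ ranges over $G$. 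Rearranging, this is an isomorphism $K^*_{yxy\inv,y}\otimes K_{y,x}\otimes W_x\xrightarrow{\cong}W_{yxy\inv}$, which by the definition~\eqref{eq:6} of $L$ is exactly a map $L_{x,y}\otimes W_x\to W_{yxy\inv}$ as in~\eqref{eq:24}. The compatibility condition in Definition~\ref{thm:14}, namely $\epsilon_W(Y* Z)=(\id* \epsilon_W)\circ(\epsilon_W* \id)$, translates under this dictionary into the statement that the maps~\eqref{eq:24} assemble into an equivariant structure for the conjugation action, using the line-bundle-over-the-groupoid $G\gpd G$ structure on $L$ established after~\eqref{eq:6}. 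Conversely, any such twisted equivariant bundle yields a central structure by running the same identifications backwards. I would also remark that morphisms in $Z(\tVect[G])$ match morphisms of twisted equivariant bundles, so the equivalence is one of categories, not merely of objects.

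The main obstacle I anticipate is bookkeeping rather than conceptual: correctly tracking the twist line $K$ and its dual through the convolution product and the conjugation action, so that the cocycle isomorphism~\eqref{eq:4} (equivalently the associativity constraint~\eqref{eq:29}) is seen to imply precisely the coherence needed for~\eqref{eq:24} to define an action of the groupoid $G\gpd G$, with all the $\theta$'s cancelling as they must. A secondary, more structural, point is the justification of the abstract identification $\tF(\cir)\simeq Z(\tVect[G])$: one should be careful that the version of the cobordism hypothesis invoked genuinely applies to the $3$-category $\C$ of Definition~\ref{canst} with $\tVect[G]$ as a fully dualizable object---this uses the sketch of full dualizability indicated in the paragraph after Definition~\ref{canst} (rigidity of $\tVect[G]$ plus the unstated $3$-categorical finiteness conditions)---and that the circle-as-trace-of-the-identity computation is carried out in the correct (oriented) framework, matching M\"uger's modularity result cited just before the proposition. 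Once those two inputs are granted, the rest is the explicit unwinding sketched above.
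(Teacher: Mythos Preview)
Your overall strategy for both parts matches the paper's, but there is a real gap in your argument for part~(i). You assert that the self-tensor-product $\A\otimes_{\A\boxtimes\A^{\textnormal{op}}}\A$ \emph{is} the Drinfeld center, citing Example~\ref{alg} as the decategorified analog. But that example actually says the opposite: for an algebra~$A$, the object $A\otimes_{A\otimes\Aop}A$ is the \emph{cocenter} $A/[A,A]$ (Hochschild homology), not the center $\Hom_{A\otimes\Aop}(A,A)$ (Hochschild cohomology); the two are identified only after one supplies a nondegenerate trace. The paper's proof makes exactly this move one categorical level up: from the two-interval decomposition of~$\cir$ it first obtains $\tF(\cir)\cong A\otimes_{A\otimes\Aop}A$ and explicitly flags this as Hochschild homology; it then invokes the trace $\theta\:\tVect[G]\to\Vect$ sending a bundle to its fiber over the identity element, uses the associated bilinear form $W\otimes W'\mapsto\theta(W*W')$ to identify $A$ with its linear dual~$\dA$, and only then concludes
\[
\tF(\cir)\cong A\otimes_{A\otimes\Aop}\dA\cong\Hom_{A\otimes\Aop}(A,A),
\]
which is the center. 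Without this self-duality step your identification is unjustified, and the concern you do raise (about the $SO(3)$-equivariance and orientations) is orthogonal to the missing ingredient.

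For part~(ii) your unwinding via skyscraper objects $\delta_y$ is exactly what the paper does; the paper is terser and stops once the isomorphism~\eqref{eq:24} has been extracted, leaving the verification of the compatibility condition implicit, whereas you spell out that it should follow from the cocycle identity for~$\theta$.
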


\noindent 
 This center is the well-known modular tensor category attached to a circle
in the twisted finite group Chern-Simons theory~\cite{F2}.
   
     \begin{proof}
We compute~$\tF(\cir)$ by decomposing the circle into two intervals~$I_L$
and~$I_R$.  We regard~$I_L$ as a morphism in the oriented bordism category
from the empty set to the disjoint union of two oppositely oriented points;
$I_R$~is a bordism in the other direction.  Then $\cir$~is the composition
$I_R\circ I_L$.  Let $A=\tVect[G]$, which is the object in~$\C$---a tensor
category---attached to a positively oriented point.  Then the tensor
category~$\Aop$ with the opposite monoidal structure is attached to the
negatively oriented point.  Now $\tF(I_L)$~is $A$ viewed as a left module
for~$A\otimes \Aop$ and $\tF(I_R)$~is $A$ viewed as a right module
for~$A\otimes \Aop$.  Thus
  \begin{equation*}
     \tF(\cir)\cong \tF(I_R\circ I_L)\cong  A \otimes \mstrut _{A\otimes
     \Aop}A 
  \end{equation*}
(This is by definition the Hochschild homology of~$A$.)  But now we use
additional structure on~$\tVect[G]$ which gives an isomorphism of~$A$ with
its linear dual $\dA=\Hom(A,\Vect)$.  Namely, there is a trace $\theta
\:A\to \Vect$ which maps a vector bundle over~$G$ to the fiber over the
identity element, and the corresponding bilinear form 
  \begin{equation*}
     \begin{split} A \otimes A &\longrightarrow \Vect \\ W\otimes W'
      &\longmapsto \theta (W*W')\end{split} 
  \end{equation*}
induces the desired identification.  Therefore, 
  \begin{equation*}
     \tF(\cir)\cong A\otimes \mstrut _{A\otimes \Aop}\dA \cong \Hom\mstrut
     _{A\otimes \Aop}(A,A) 
  \end{equation*}
which we may identify with the center of~$A$ (the Hochschild cohomology).

For~(ii) suppose $W\to G$ is in the center.  Let $W'\to G$ be the vector
bundle which is the trivial line~$\CC_y$ at some~$y\in G$ and zero elsewhere.
Then the braiding gives, for every~$x\in G$, an isomorphism
  \begin{equation*}
     K_{y,x}\otimes \CC_y\otimes W_x \longrightarrow
     K_{yxy\inv ,y}\otimes W_{yxy\inv }\otimes  \CC_y
  \end{equation*}
which, by \eqref{eq:6}, is the desired isomorphism~\eqref{eq:24}. 
        \end{proof}

  \section{2-cocycles on tori}\label{sec:6}

 \subsection{$H^4(BG; \ZZ)$ for torus groups}\label{subsec:3.1}

 Let $G=T$~be a compact connected abelian Lie group, i.e., a torus.
Associated to it are the dual lattices (finitely generated free abelian
groups)
  \begin{equation*}
     \begin{aligned} \Pi &= \Hom(\TT,T) \cong H_1(T; \ZZ) \cong H_2(BT; \ZZ) \\ \Lambda
      &=\Hom(T,\TT) \cong H^1(T; \ZZ)\cong H^2(BT; \ZZ)\end{aligned} 
  \end{equation*}
Let $\mathfrak{t}$~be the Lie algebra of~$T$.  Then $\Pi \subset
\mathfrak{t}$ by differentiation of a homomorphism, and dually~$\Lambda
\subset \mathfrak{t}^*$, also by differentiation.  The cohomology ring
of~$BT$ is the symmetric ring on~$H^2(BT; \ZZ)=\Lambda$, so in particular 
  \begin{equation}\label{eq:14}
      H^4(BT; \ZZ)\cong \Sym^2\Lambda 
  \end{equation}
We identify~$\Sym^2\Lambda $ with the group of homogeneous quadratic
functions $q\:\Pi \to\ZZ$.  For there is a natural quotient map $\Lambda
^{\otimes 2}\to \Sym^2\Lambda $ with kernel the alternating tensors, and the
value~$q(\pi )$ of the lift of an element of~$\Sym^2\Lambda $ on~$\pi \otimes
\pi $ is independent of the lift.  Then the symmetric bi-additive
homomorphism (form)
  \begin{equation}\label{eq:9}
     \langle \pi _1,\pi _2 \rangle = q(\pi _1+\pi _2)-q(\pi _1)-q(\pi
     _2),\qquad \pi _1,\pi _2\in \Pi , 
  \end{equation}
is even ($\langle \pi ,\pi \rangle\in 2\ZZ$) and $q(\pi )=\frac 12\langle \pi
,\pi \rangle$.  Therefore, the group~$\Sym^2\Lambda $ in~\eqref{eq:14} is
also isomorphic to the group of even forms $\Pi \times \Pi \to\ZZ$.

        \begin{definition}[]\label{thm:9}
 A class in~$H^4(BT; \ZZ)$ is \emph{nondegenerate} if the corresponding
form~$\langle -,- \rangle$ is nondegenerate over~$\QQ$.
        \end{definition}

\noindent
 The form induces a homomorphism $\tau \:\Pi \to\Lambda $; it is
nondegenerate over~$\QQ$ if the map $\tau _{\QQ}\:\Pi \otimes \QQ\to\Lambda
\otimes \QQ$ on rational vector spaces is an isomorphism, or equivalently if
$\tau $~is injective.

        \begin{remark}[]\label{thm:8}
 The group~$(\Sym^2\Pi )^*$ of all symmetric bi-additive homomorphisms $\Pi
\times \Pi \to\ZZ$ fits into the exact sequence 
  \begin{equation*}
     0\longrightarrow \Sym^2\Lambda \longrightarrow (\Sym^2\Pi
     )^*\longrightarrow \Hom(\Pi ,\zt)\longrightarrow 0 
  \end{equation*}
where the quotient map takes a form~$\langle -,- \rangle$ to $\pi \mapsto
\langle \pi ,\pi \rangle\pmod2$.  The quotient is the cohomology
group~$H^2(BT;\zt)$, the kernel is the cohomology group~$H^4(BT;\ZZ)$, and we
can identify the middle term with the cohomology group~$h^4(BT)$, where
$h$~is the first cohomology theory mentioned in Remark~\ref{thm:4}.
        \end{remark}

Fix a \emph{nondegenerate} class in~$H^4(BT; \ZZ)$ with corresponding
form~$\langle -,- \rangle$ and homomorphism $\tau \:\Pi \to\Lambda $.
Applying~$\otimes\, \RR$ we extend the form to~$\mathfrak{t}\times
\mathfrak{t}$ and obtain a linear map $\tR \:\mathfrak{t}\to\mathfrak{t}^*$.
The nondegeneracy implies that $\tR $~is invertible.  Let $L\to
\mathfrak{t}\times \mathfrak{t}$ be the trivial line bundle and lift the
action of~$\Pi \times \Pi $ on~$\mathfrak{t}\times \mathfrak{t}$ to~$L$ by
setting
  \begin{equation}\label{eq:15}
     (\pi ,\pi ')\:L_{\xi ,\xi '}\longrightarrow L_{\xi +\pi ,\xi '+\pi
     '},\qquad \pi ,\pi '\in \Pi ,\quad \xi ,\xi '\in \mathfrak{t},
  \end{equation}
to act as multiplication by 
  \begin{equation}\label{eq:16}
     e\Bigl(\frac {\langle \pi ,\xi ' \rangle - \langle \xi ,\pi '
     \rangle + \langle \pi ,\pi ' \rangle}2 \Bigr) ,
  \end{equation}
where $e(a)=e^{2\pi ia}$ for a real number~$a$.  Also, define the
correspondence 
  \begin{equation}\label{eq:17}
     \xymatrix@!C{&C\ar[dl]_{p_1}\ar[dr]^{p_2} \\ T && \Lambda /\tau (\Pi
      )\ar@{-->}@<1ex>[ul]^s} 
  \end{equation}
by $C=(\mathfrak{t}\oplus \Lambda )/\Pi $ where the action by~$\pi \in \Pi $
on~$(\xi ,\lambda )\in \mathfrak{t}\times \Lambda $ is 
  \begin{equation}\label{eq:18}
     \pi \cdot (\xi ,\lambda ) = \bigl(\, \xi +\pi ,\lambda +\tau (\pi )\,
     \bigr) .
  \end{equation}

        \begin{proposition}[]\label{thm:12}
 
 \begin{enumerate}[(i)]
 \item The expression~\eqref{eq:16} does lift the action of~$\Pi \times \Pi
$, whence there is a quotient hermitian line bundle $L\to T\times T$.  There
are natural isomorphisms
  \begin{equation}\label{eq:19}
     L_{x,y'}\otimes L_{x,y}\longrightarrow L_{x,y'y},\qquad x,y,y'\in T, 
  \end{equation}
which lift to the identity map on~$\mathfrak{t}^{\times 3}$ and which satisfy
an associativity condition.  Hence for each~$x\in T$ the bundle $L_{x,-}\to
T$ determines a central extension $\TT\to \hT_x\to T$.
 \item The fiber of~$p_1$ over~$x\in T$ may be identified with the $\Lambda
$-torsor of splittings of $\TT\to \hT_x\to T$.  The splitting~$\chi \mstrut
_{(\xi ,\lambda )}$ corresponding to~$(\xi ,\lambda )\in \mathfrak{t}\times
\Lambda $ is determined by
  \begin{equation}\label{eq:20}
     \chi \mstrut _{(\xi ,\lambda )}(\xi ') = e\Bigl(\bigl\langle\, \tRi
     (\lambda)-\frac \xi 2\,,\,\xi ' \,\bigr\rangle \Bigr)\;\in L_{\xi ,\xi
     '} ,\qquad \xi '\in \mathfrak{t}. 
  \end{equation}
 \item $C$~is a group and $p_2$~is split by the homomorphism $s\:\Lambda /\tau
(\Pi )\to C$ defined by
  \begin{equation}\label{eq:21}
     s(\lambda ) = \bigl(\tRi (\lambda ),\lambda \bigr)\;\in
     \mathfrak{t}\oplus  \Lambda ,\qquad \lambda \in \Lambda .
  \end{equation}
 \end{enumerate}
        \end{proposition}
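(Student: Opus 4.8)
The plan is to treat (i)--(iii) as linear bookkeeping with the lattices $\Pi\subset\mathfrak{t}$ and $\Lambda\subset\mathfrak{t}^*$, the only structural input being that $\tR$ restricts to $\tau$ on $\Pi$, equivalently $\tRi\circ\tau=\id_\Pi$; this is the identity that mediates between constructions done over $\RR$ and the integral data. For (i), first check that \eqref{eq:16} is a cocycle for the $\Pi\times\Pi$-action: writing $m_{(\pi,\pi')}(\xi,\xi')$ for the scalar in \eqref{eq:16}, one verifies $m_{(\pi_2,\pi_2')}(\xi+\pi_1,\xi'+\pi_1')\,m_{(\pi_1,\pi_1')}(\xi,\xi')=m_{(\pi_1+\pi_2,\pi_1'+\pi_2')}(\xi,\xi')$, and on expanding the arguments of $e(-)$ the two sides differ by $-\langle\pi_1,\pi_2'\rangle\in\ZZ$, hence are equal; this yields the quotient bundle $L\to T\times T$. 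For \eqref{eq:19}, over $\mathfrak{t}^{\times3}$ every fibre of $L$ is canonically $\CC$, so I take the literal multiplication $\CC\otimes\CC\to\CC$; that it descends follows from its commuting with the $\Pi$-action in the lift of $x$ and in each lift of $y,y'$, which in turn is just the facts that $m_{(0,\pi')}(\xi,\xi')=e(-\tfrac12\langle\xi,\pi'\rangle)$ is independent of $\xi'$ and $m_{(\pi,0)}(\xi,\xi')=e(\tfrac12\langle\pi,\xi'\rangle)$ is additive in $\xi'$. Associativity and the unit are inherited from $\CC$, and a line bundle over $T$ carrying such a multiplication is, by the dictionary \eqref{eq:28}--\eqref{eq:29} with $G=T$, precisely a central extension $\TT\to\hT_x\to T$.

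For (ii), fix a lift $\xi\in\mathfrak{t}$ of $x$; relative to it $L_{x,-}$ is $(\mathfrak{t}\times\CC)/\Pi$ with $\pi'$ acting by $e(-\tfrac12\langle\xi,\pi'\rangle)$, so a splitting of $\hT_x$ corresponds to a function $\sigma\:\mathfrak{t}\to\CC$ with $|\sigma|\equiv1$, $\sigma(\xi'+\pi')=e(-\tfrac12\langle\xi,\pi'\rangle)\sigma(\xi')$, and $\sigma(\xi'+\xi'')=\sigma(\xi')\sigma(\xi'')$. I would check that $\chi_{(\xi,\lambda)}$ of \eqref{eq:20} has all three: equivariance needs $\langle\tRi(\lambda),\pi'\rangle\in\ZZ$, which holds because $\langle\tRi(\lambda),\pi'\rangle=\lambda(\pi')$ is the integral pairing of $\Lambda$ against $\Pi$; the norm condition is clear; and multiplicativity is linearity of $\xi'\mapsto\langle\tRi(\lambda)-\tfrac\xi2,\xi'\rangle$. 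Since $\chi_{(\xi,\lambda)}\,\chi_{(\xi,\lambda')}^{-1}$ is the character $e(\langle\tRi(\lambda-\lambda'),-\rangle)$ of $T$, the map $\lambda\mapsto\chi_{(\xi,\lambda)}$ carries the $\Lambda$-translation on $p_1^{-1}(x)$ to the action of $\Lambda=\Hom(T,\TT)$ on the set of splittings (nonempty, witnessed by $\chi_{(\xi,0)}$), so it is an isomorphism of $\Lambda$-torsors. Finally this identification is independent of $\xi$: replacing $\xi$ by $\xi+\pi$ rescales the trivialization of $L_{x,-}$ by $e(\tfrac12\langle\pi,-\rangle)$ and replaces $(\xi,\lambda)$ by $(\xi+\pi,\lambda+\tau(\pi))$, and $\tRi(\tau(\pi))=\pi$ makes these cancel.

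For (iii), $\pi\mapsto(\pi,\tau(\pi))$ is an injective homomorphism $\Pi\to\mathfrak{t}\oplus\Lambda$ with discrete image, so $C$ is an abelian Lie group on which \eqref{eq:18} acts by translations, and $p_2\:[(\xi,\lambda)]\mapsto[\lambda]$ is a homomorphism onto $\Lambda/\tau(\Pi)$. For the splitting in \eqref{eq:21}: well-definedness of $s(\lambda)=[(\tRi(\lambda),\lambda)]$ on $\Lambda/\tau(\Pi)$ again uses $\tRi(\tau(\pi))=\pi$, since replacing $\lambda$ by $\lambda+\tau(\pi)$ changes $(\tRi(\lambda),\lambda)$ by $(\pi,\tau(\pi))\in\Pi$; additivity of $s$ is linearity of $\tRi$; and $p_2\circ s=\id$ is immediate.

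The one nonformal ingredient throughout is the recurring appeal to $\tR|_\Pi=\tau$, i.e.\ $\tRi\circ\tau=\id_\Pi$, to reconcile the $\RR$-linear $\tR,\tRi$ with the integral lattices. Accordingly I expect the points needing most care to be the descent and associativity of the multiplication in (i) and the independence of the chosen lift $\xi$ in the last step of (ii), where sign slips are easiest to make.
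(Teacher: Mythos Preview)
Your proposal is correct and follows the same approach as the paper, which simply states that ``the proof is a series of straightforward verifications from~\eqref{eq:16}, \eqref{eq:18}, \eqref{eq:20}, and~\eqref{eq:21}.'' You have carried out exactly those verifications, with the expected key identity $\tRi\circ\tau=\id_\Pi$ doing the work at each integrality step.
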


\noindent
 The proof is a series of straightforward verifications from~\eqref{eq:16},
\eqref{eq:18}, \eqref{eq:20}, and~\eqref{eq:21}. 

        \begin{notation}[]\label{thm:29}
 Let $\hF\subset C$~denote the image of~$s$ and $F\subset T$ the
image~$p_1(\hF)$. 
        \end{notation}

\noindent
 Both~$F$ and~$\hF$ are finite groups isomorphic to~$\Lambda /\tau (\Pi )$.
The abelian groups~$F$ and~$\Lambda /\tau (\Pi )$ are in Pontrjagin duality
by pairing characters in~$\Lambda $ with elements of~$F\subset T$.
The map~$p_2$ has as fibers affine spaces for~$\mathfrak{t}$.  The
subgroup~$\hF$ contains a unique element in each affine space.  Also, for
each~$x\in F$ the lift ~$\hat x\in \hF\subset C$ defines a distinguished
projective character of~$\hT_x$.  Observe that $\mathfrak{t}\subset C$ as the
fiber of~$p_2$ over~$0$, and $C$~is isomorphic to the direct
sum~$\mathfrak{t}\oplus \hF$. 

A class in~$H^4(BT)$ may also be represented by a 2-cocycle\footnote{ A
2-cocycle on~$T$ with values in hermitian lines is defined as
in~Definition~\ref{thm:11} with the additional requirement that the line
bundle $K\to T\times T$ and isometry~\eqref{eq:4} be smooth.}, as in
Proposition~\ref{thm:7}, and it has an explicit construction analogous to that
of~$L$ in~\eqref{eq:15}.  It depends on a choice of a (nonsymmetric) bilinear
form $B\:\Pi \times \Pi \to\ZZ$ which whose symmetrization is the
form~\eqref{eq:9}:
  \begin{equation*}
     B(\pi _1,\pi _2) + B(\pi _2,\pi _1 )=\langle \pi _1 ,\pi _2
     \rangle,\qquad \pi  _1,\pi _2\in \Pi . 
  \end{equation*}
Namely, let $K\to\mathfrak{t}\times \mathfrak{t}$
be the trivial line bundle and lift the action of~$\Pi \times \Pi $
on~$\mathfrak{t}\times \mathfrak{t}$ to~$K$ by setting 
  \begin{equation}\label{eq:67}
     (\pi ,\pi ')\:K_{\xi ,\xi '}\longrightarrow K_{\xi +\pi ,\xi '+\pi
     '},\qquad \pi ,\pi '\in \Pi ,\quad \xi ,\xi '\in \mathfrak{t}, 
  \end{equation}
to act as multiplication by  
  \begin{equation}\label{eq:68}
     e\Bigl(\frac {B( \pi ,\xi ' ) - B( \xi ,\pi ' ) +
     B( \pi ,\pi ' )}2 \Bigr) , 
  \end{equation}

        \begin{proposition}[]\label{thm:10}
 \begin{enumerate}[(i)]
 \item The expression~\eqref{eq:68} does lift the action of~$\Pi \times \Pi
$, whence there is a quotient hermitian line bundle $K\to T\times T$.  There
are natural isomorphisms~$\theta \mstrut _{x,y,z}$ as in~\eqref{eq:4} which
lift to the identity map on~$\mathfrak{t}^{\times 3}$ and which satisfy the
cocycle condition~\eqref{eq:11}.
 \item There is an isomorphism 
  \begin{equation*}
     L_{x,y} \xrightarrow{\cong } K\inv _{x,y}\otimes K_{y,x} 
  \end{equation*}
such that \eqref{eq:19}~is $\theta \mstrut _{x,y',y}\theta \mstrut
_{y',y,x}\theta \inv _{y',x,y}$.
 \end{enumerate} 
        \end{proposition}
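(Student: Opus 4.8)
The plan is to carry out everything on the universal cover. Both $L$ and $K$ are realized as quotients of the trivial hermitian line bundle on $\mathfrak{t}\times\mathfrak{t}$ by the explicit lifts \eqref{eq:15}--\eqref{eq:16} and \eqref{eq:67}--\eqref{eq:68} of the $\Pi\times\Pi$-action, so the whole proposition reduces to elementary computations with the bilinear form $B$, using repeatedly that $B$ takes integer values on $\Pi\times\Pi$ and that $\langle-,-\rangle$ is its symmetrization. The argument runs exactly parallel to Proposition~\ref{thm:12}, whose proof the excerpt already calls ``a series of straightforward verifications''.

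For part~(i) I would first check that \eqref{eq:68} really is a lift of the group action: writing $c_{(\pi,\pi')}(\xi,\xi')$ for the scalar in \eqref{eq:68}, the two ways of evaluating the multiplier for $(\pi_1+\pi_2,\pi_1'+\pi_2')$, namely $c_{(\pi_1+\pi_2,\,\pi_1'+\pi_2')}(\xi,\xi')$ and $c_{(\pi_1,\pi_1')}(\xi+\pi_2,\xi'+\pi_2')\,c_{(\pi_2,\pi_2')}(\xi,\xi')$, differ only by $e\bigl(B(\pi_2,\pi_1')\bigr)=1$. This produces the quotient hermitian line bundle $K\to T\times T$. For the trivializations, the point is that $K_{y,z}\otimes K\inv _{xy,z}\otimes K_{x,yz}\otimes K\inv _{x,y}$ pulls back to a canonically trivial bundle on $\mathfrak{t}^{\times 3}$, and that the alternating product of the four corresponding multipliers \eqref{eq:68} is identically $1$: after expanding, every monomial $B(\pi_i,\xi_j)$, $B(\xi_i,\pi_j)$, $B(\pi_i,\pi_j)$ cancels between the four factors. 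Hence the canonical trivialization descends to the isometries $\theta_{x,y,z}$, and by construction $\theta$ lifts to the identity on $\mathfrak{t}^{\times 3}$. The cocycle condition \eqref{eq:11} then follows by pulling back to $\mathfrak{t}^{\times 4}$, where both sides become the same composite of canonical trivializations, and descending. (In passing this shows $(K,\theta)$ is a smooth $2$-cocycle on $T$ representing, via \eqref{eq:14} and \eqref{eq:9}, the chosen class in $H^4(BT;\ZZ)$.)

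For part~(ii) I would pull $K\inv _{x,y}\otimes K_{y,x}$ back to $\mathfrak{t}\times\mathfrak{t}$, where its multiplier is $c_{(\pi,\pi')}(\xi,\xi')\inv \,c_{(\pi',\pi)}(\xi',\xi)$, expand using $B(a,b)+B(b,a)=\langle a,b\rangle$, and compare with the multiplier \eqref{eq:16} of $L$: the two coincide up to the universally trivial factor $e\bigl(B(\pi',\pi)\bigr)=1$ (exactly the discrepancy between $B$ and its symmetrization), so the natural map of trivial bundles on $\mathfrak{t}\times\mathfrak{t}$ descends to an isomorphism $L_{x,y}\xrightarrow{\cong}K\inv _{x,y}\otimes K_{y,x}$. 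To identify \eqref{eq:19} under this isomorphism I would again pull back to $\mathfrak{t}^{\times 3}$: by Proposition~\ref{thm:12}(i) the multiplication on $L$ lifts to the identity, by part~(i) each $\theta$ lifts to the identity, and so does the isomorphism just built; hence it suffices to check that $\theta_{x,y',y}\,\theta_{y',y,x}\,\theta\inv _{y',x,y}$ is an isometry between exactly the tensor products of $K$'s occurring on the two sides of \eqref{eq:19} after transport. That is a finite index-chase with \eqref{eq:4}: three pairs of $K$-factors cancel in succession (two of them using commutativity of $T$), and what remains is precisely $\bigl(K\inv _{x,y'}\otimes K_{y',x}\bigr)\otimes\bigl(K\inv _{x,y}\otimes K_{y,x}\bigr)\to K\inv _{x,y'y}\otimes K_{y'y,x}$, i.e.\ $L_{x,y'}\otimes L_{x,y}\to L_{x,y'y}$. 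This is the torus incarnation of the multiplication law attached to \eqref{eq:6} in the finite-group case.

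I expect the only real difficulty to be the bookkeeping -- organizing the term-by-term cancellations in part~(i), and in part~(ii) getting the combinatorics of which triple of $\theta$'s appears correct while keeping the sign and orientation conventions consistent. There is no conceptual obstacle; these are verifications of precisely the kind that go into Proposition~\ref{thm:12}.
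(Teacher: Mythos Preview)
Your approach is exactly what the paper has in mind: the paper's own proof consists of the single sentence ``The proof is a series of straightforward verifications,'' and your proposal spells out precisely those verifications on the universal cover, reducing everything to bilinear bookkeeping with $B$ and its symmetrization~$\langle-,-\rangle$. (One caution on the bookkeeping in part~(ii): the multipliers of $L_{x,y}$ and $K^{-1}_{x,y}\otimes K_{y,x}$ do not literally differ by the single integer factor $e\bigl(B(\pi',\pi)\bigr)$ you write, so the descending map is not the identity on the trivial bundle but rather multiplication by an explicit function built from~$B$; this does not affect the argument, only the formula.)
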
 

\noindent
 The proof is a series of straightforward verifications.

 \subsection{Classical descriptions}\label{subsec:6.1}

We continue with the notation of~\ref{subsec:3.1}. Recall that the level is a
class in~$H^4(BT;\ZZ)$ and is represented by a homogeneous quadratic map
$q\:\Pi \to\ZZ $.  It determines a homomorphism $\tau \:\Pi \to\Lambda $.
The finite subgroup~$F\subset T$, defined in Notation~\ref{thm:29}, may be
identified as 
  \begin{equation*}
     F\cong \tau _{\QQ}\inv (\Lambda )/\Pi \subset (\Pi \otimes \QQ)/\Pi.      
  \end{equation*}
Then $q$~induces a homogeneous quadratic map $\qQZ\:F\to\QZ$.  We give a
topological interpretation.  For an abelian group~$A$ and nonnegative
integer~$n$, let $K(A,n)$~be the corresponding Eilenberg-MacLane space.

        \begin{lemma}[{\cite[Theorem~26.1]{EL}}]\label{thm:30}
 Let $A,B$ be discrete abelian groups.  Then the set of homogeneous quadratic
forms $q\:A\to B$ is isomorphic to the homotopy classes of maps
$\tq\:K(A,2)\to K(B,4)$.
        \end{lemma}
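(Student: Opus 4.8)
The plan is to reduce to a cohomology computation for $K(A,2)$ and then invoke the universal property of Whitehead's quadratic functor. Since $K(B,4)$ represents fourth cohomology with coefficients in $B$, there is a natural bijection $[K(A,2),K(B,4)]\cong H^4(K(A,2);B)$, compatible with the evident abelian group structures on both sides (addition of maps via the infinite loop structure of $K(B,4)$ on the left, addition of cohomology classes on the right). So it suffices to exhibit a natural isomorphism of $H^4(K(A,2);B)$ with the group $\operatorname{Quad}(A,B)$ of homogeneous quadratic maps $q\colon A\to B$.

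The input I would quote is the classical low-degree homology of $K(A,2)$: $H_1(K(A,2);\ZZ)=0$; $H_2(K(A,2);\ZZ)\cong A$ by Hurewicz; $H_3(K(A,2);\ZZ)=0$; and $H_4(K(A,2);\ZZ)\cong\Gamma(A)$, where $\Gamma$ is Whitehead's universal quadratic functor --- the abelian group on generators $\gamma(a)$ ($a\in A$), modulo $\gamma(-a)=\gamma(a)$ and the four-term relation making $a\mapsto\gamma(a)$ the universal quadratic map out of $A$. For $A=\ZZ$ this is just $H_*(\CP^\infty;\ZZ)$, with $\Gamma(\ZZ)\cong\ZZ$ generated by the image of the fundamental class of $\CP^2$; for $A$ finitely generated one reduces to $\ZZ$ and $\zmod{p^k}$ via the K\"unneth theorem, the cyclic case following from the mod-$p$ (co)homology of $K(\zmod{p^k},2)$ (e.g.\ from the fibration $K(\zmod{p^k},1)\to\ast\to K(\zmod{p^k},2)$ and a Bockstein argument); and the general case follows because $K(-,2)$, integral homology, and $\Gamma$ all preserve filtered colimits of abelian groups.

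Granting this, the universal coefficient theorem in degree $4$ yields a natural exact sequence
\[
0\longrightarrow\operatorname{Ext}^1_{\ZZ}\!\bigl(H_3(K(A,2);\ZZ),\,B\bigr)\longrightarrow H^4(K(A,2);B)\longrightarrow\Hom_{\ZZ}\!\bigl(H_4(K(A,2);\ZZ),\,B\bigr)\longrightarrow 0;
\]
since $H_3(K(A,2);\ZZ)=0$ the left term vanishes and $H^4(K(A,2);B)\cong\Hom(\Gamma(A),B)$, naturally in $A$ and $B$. The universal property of $\Gamma$ then identifies $\Hom(\Gamma(A),B)$ with $\operatorname{Quad}(A,B)$ by precomposition with the universal quadratic map $A\to\Gamma(A)$. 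Composing the three natural identifications proves the lemma. Unwinding them: a map $\tq\colon K(A,2)\to K(B,4)$ corresponds to the quadratic form sending $a\in A$ to the class of $\tq\circ g_a$ under $[\CP^2,K(B,4)]\cong H^4(\CP^2;B)\cong B$, where $g_a\colon\CP^2\to K(A,2)$ is the essentially unique map restricting on $\CP^1$ to $a\in A\cong H^2(\CP^1;A)$ (existence and uniqueness up to homotopy because $[\CP^2,K(A,2)]\cong H^2(\CP^2;A)\cong A$); that this assignment is a bijection reflects exactly the fact that $\Gamma(A)=H_4(K(A,2);\ZZ)$ is generated by the images of the fundamental classes of such maps $\CP^2\to K(A,2)$, and homogeneity $q(na)=n^2q(a)$ is visible because a self-map of $\CP^2$ inducing multiplication by $n$ on $H^2$ induces multiplication by $n^2$ on $H^4$.

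The only non-formal ingredient is the homological input of the second paragraph --- above all $H_3(K(A,2);\ZZ)=0$ and $H_4(K(A,2);\ZZ)\cong\Gamma(A)$ --- which is essentially the substance of the cited Eilenberg--MacLane theorem; in the write-up one can either invoke it outright or include the reduction to $\CP^\infty$ and to cyclic groups sketched above. Everything downstream is routine homological algebra together with the universal property of $\Gamma$.
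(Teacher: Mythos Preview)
The paper does not supply a proof of this lemma; it simply cites Eilenberg--MacLane \cite[Theorem~26.1]{EL}.  Your argument is correct and is exactly the standard route to that theorem: identify $[K(A,2),K(B,4)]$ with $H^4(K(A,2);B)$ by representability, use the universal coefficient theorem together with $H_3(K(A,2);\ZZ)=0$ and $H_4(K(A,2);\ZZ)\cong\Gamma(A)$ to get $\Hom(\Gamma(A),B)$, and then invoke the universal property of Whitehead's~$\Gamma$.  You have correctly flagged that the only substantive step is the homological input $H_3=0$, $H_4\cong\Gamma(A)$, which is precisely what the cited Eilenberg--MacLane paper computes; everything else is formal.
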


\noindent
 Applying the lemma we may represent~$\qQZ$ by a continuous map
  \begin{equation}\label{eq:49}
     \tqQZ\:K(F,2)\longrightarrow K(\QZ,4) .
  \end{equation}
 
Now for any manifold~$X$ let $\F_F(X)$~denote the 2-groupoid of
``$F$-gerbes'' on~$X$.  Recall that an $F$-gerbe\footnote{Physicists are
familiar with gerbes as fields in a field theory in string theories, where
they are known as ``$B$-fields''.} is a geometric representative of a class
in~$H^2(X;F)$.  One possible topological model is that an $F$-gerbe on~$X$ is
a map $X\to K(F,2)$.  In this model $\F_F(X)$~is the fundamental 2-groupoid
of the mapping space~$\Map\bigl(X,K(F,2) \bigr)$.  There are three nonzero
homotopy groups: 
  \begin{equation*}
     \pi _0\F_F(X)\cong H^2(X;F),\qquad \pi _1\F_F(X)\cong H^1(X;F),\qquad
     \pi _2\F_F(X)\cong H^0(X;F). 
  \end{equation*}
Composition with~\eqref{eq:49} gives the lagrangian of the field theory.
Suppose $X$~is a closed oriented 4-manifold.  Then the action is defined by
integrating the lagrangian, which in this case means pairing with the
fundamental class given by the orientation.  The result only depends on the
equivalence class of the gerbe and gives a homogeneous quadratic map
  \begin{equation*}
     q\mstrut _X\:H^2(X;F)\longrightarrow \QZ ,
  \end{equation*}
which is defined using the cup square and the quadratic form~$q$.  The
quantum invariant is then a finite path integral~(\S\ref{sec:5}) over the
stack of~$F$-gerbes:
  \begin{equation}\label{eq:52}
     \A\mstrut _F(X) = \sum\limits_{\G} \frac{\# H^0(X;F)}{\#H^1(X;F)} \;e^{2\pi
     iq\mstrut _X(\G)}, 
  \end{equation}
where the sum is over a set of representative F-gerbes on~$X$.
(Compare~\eqref{eq:31}.)  This Gauss sum may be evaluated
explicitly~\cite{HM}:
  \begin{equation}\label{eq:53}
     \begin{split} \A\mstrut _F(X) &= \frac{\# H^0(X;F)}{\#H^1(X;F)} \;
      \sqrt{\#H^2(X;F)}\;\exp\Bigl[2\pi i(\sign \form)(\sign X)/8\Bigr]  \\
       &= (\sqrt{\#F})^{\Euler X}\,\mu ^{(\sign\form )(\sign X)} 
 ,\end{split} 
  \end{equation}
where $\sign X$~is the signature and $\Euler X$ the Euler characteristic of
the 4-manifold~$X$; $\sign\form$~is the signature of the bilinear
form~$\form=\langle -,- \rangle$ in~\eqref{eq:9} associated to~$q$ (after
tensoring with~$\QQ$); and $\mu =\exp(2\pi i/8)$ is a primitive
$8^{\textnormal{th}}$~root of unity.

We claim that the finite path integral procedure of~\S\ref{sec:5}, applied to
the $F$-gerbes and the quadratic form~$q$, defines an invertible
4-dimensional TQFT~$\A_{F}\mstrut $.  For this we need to first specify a
target symmetric monoidal 4-category~$\C$. The relevant category can be
described informally as a ``de-looping'' of the $3$-category of Definition
\ref{canst}. Roughly speaking, the objects of $\C$ are braided tensor
categories $\A$ (which we can think of as {\it associative algebras} in the
setting of tensor categories).  A $1$-morphism from $\A$ to $\A'$ is an
$\A$-$\A'$ bimodule in the setting of tensor categories: that is, a tensor
category $\D$ equipped with commuting central actions of the braided monoidal
categories $\A$ and $\A'$. The $2$-morphisms in $\C$ are given by linear
categories, the $3$-morphisms by functors, and the $4$-morphisms by natural
transformations.  (See~\ref{cs7.2} for further discussion.)

Here we give a second classical description of the anomaly theory which leads
to a finite dimensional but not finite path integral, and we use some
heuristics in its evaluation on a 4-manifold.  In this theory the finite
group~$F$ is replaced by the Lie algebra~$\mathfrak{t}$ with the discrete
topology.  Now we use the real homogeneous quadratic form $q\mstrut
_{\RR}\:\mathfrak{t}\to\RR$ and apply Lemma~\ref{thm:30} to obtain
  \begin{equation}\label{eq:54}
     K(\mathfrak{t},2)\longrightarrow K(\RR,4)\longrightarrow K(\RZ,4) .
  \end{equation}
Let $\Ft(X)$~be the 2-groupoid of $\mathfrak{t}$-gerbes on a closed oriented
4-manifold~$X$. Then \eqref{eq:54}~determines a homogeneous quadratic map
  \begin{equation}\label{eq:55}
     q\mstrut _{X}\:H^2(X;\mathfrak{t})\longrightarrow \RZ. 
  \end{equation}
The finite sum~\eqref{eq:52} is now replaced by a sum over an uncountable
set, which we interpret as an integral:
  \begin{equation*}
     \At(X) = \int\limits_{\G\in H^2(X;\mathfrak{t})} \frac{\vol
     H^0(X;\mathfrak{t})}{\vol H^1(X;\mathfrak{t})} \;e^{2\pi iq\mstrut _X(\G)}.
  \end{equation*}
Here `$\vol$'~denotes a formal volume which we regularize below.  This is a
Gaussian integral, and we evaluate it as\footnote{If $Q$~is a symmetric bilinear
form on a finite dimensional real vector space~$V$, it induces a map $V\to
V^*$ whose determinant is a map $\Det Q\:\Det V\to\Det V^*$, so an
element~$\Det Q\in( \Det V^*)^{\otimes 2}$.  The integral of~$e^{iQ(x,x)/2}$
over~$V$ has an algebraic evaluation as
  \begin{equation*}
     \frac{e^{2\pi i(\sign Q)/8}}{\sqrt{|\det Q|}}\quad \in |\Det V|, 
  \end{equation*}
where $|\Det V|$~is the real line associated to~$V$ by the absolute value
character of~$\RR^{\not= 0}$.  A translation-invariant volume form on~$V$ may
be viewed as an element of the dual real line~$|\Det V^*|$, which then gives
a numerical answer which matches the usual Gaussian integral.  This explains
the signature factors in~\eqref{eq:57}.}
  \begin{equation}\label{eq:57}
     \begin{split} \At(X) &= \frac{\vol  H^0(X;\mathfrak{t})}{\vol
     H^1(X;\mathfrak{t})} \; 
      \sqrt{\vol H^2(X;\mathfrak{t})}\;\exp\Bigl[2\pi i(\sign \form)(\sign
     X)/8\Bigr] 
     \\ &=  \lambda ^{\Euler X}\,\mu ^{(\sign\form )(\sign X)}, \end{split} 
  \end{equation}
where $\lambda $~is a constant we choose equal to~$\sqrt{\#F}$ to
match~\eqref{eq:53}. 

Before leaving these classical descriptions we indicate a classical coupling
of the usual toral Chern-Simons to the classical gerbe theory~$\At$ on a
compact oriented 4-manifold~$X$ with boundary a closed oriented
3-manifold~$Y$.  We freely use generalized differential
cohomology~\cite{HS}.\footnote{We use differential theories based on the
Eilenberg-MacLane spectrum~$H\Pi $.}  Let $\G$~be a $\mathfrak{t}$-gerbe
on~$X$; since $\mathfrak{t}$~has the discrete topology $\G$~ is flat.  Its
restriction~$\partial \G$ to~$Y$ has an exponential~$\exp\partial \G$ which
is a topologically trivial flat $T$-gerbe.  The field in toral Chern-Simons
is a `non-flat trivialization'~$P$ of~$\exp\partial \G$.  More precisely,
there is a groupoid whose objects are $\mathfrak{t}$-gerbes on~$Y$ and whose
morphisms are torsors for the differential cohomology group~$\cHP^2(Y)$;
these morphisms are equivalence classes of non-flat trivializations of the
exponentials of the $\mathfrak{t}$-gerbes.  (The actual non-flat
trivializations are morphisms in a 2-groupoid.)  Now $P$~extends to a
non-flat trivialization of~$\exp\G$ on~$X$.  Let $\omega \in \Omega
^2(X;\mathfrak{t})$ be its curvature.  Work over a base manifold~$S$.  Then
the quadratic form~\eqref{eq:54}, applied to~$\partial \G$ and integrated
over the fibers of $Y\to S$, yields a flat $\RZ$-bundle over~$S$ whose
equivalence class in~$H^1(S;\RZ)$ is computed by a relative version
of~\eqref{eq:55} for the relative 3-manifold~$Y\to S$.  Because $\partial
\G$~is extended to a $\mathfrak{t}$-gerbe on~$X$, this $\RZ$-bundle comes
with a trivialization.  The Chern-Simons action is a section of this circle
bundle, and using the trivialization may be identified with the function
  \begin{equation}\label{eq:59}
     \int_{X/Z}\langle \omega \wedge \omega \rangle\quad
     (\textnormal{mod}\;\ZZ)
  \end{equation}
on~$S$. 
 
In the quantum Chern-Simons theory we integrate the exponential
of~\eqref{eq:59} over the stack of non-flat trivializations~$P$ for
fixed~$\G$.  The result lives in the complex line bundle $L(\partial \G)\to
S$ which is the exponential of the $\RZ$-bundle in the previous paragraph.
Automorphisms of~$\partial \G$ act on~$L(\partial \G)$ and the result of the
path integral is invariant.  Therefore, if these automorphism act
nontrivially the path integral vanishes.  (This is called the `Gauss law' in
physics.)  Now an automorphism~$\alpha $ of~$\partial \G$ acts through its
equivalence class ~$[\alpha ]\in H^1(Y;\mathfrak{t})$, and the action only
depends on the equivalence class ~$[\partial \G]\in H^2(Y;\mathfrak{t})$ of
the gerbe: namely, it acts as multiplication by~$\exp\bigl(2\pi i\,\langle
[\alpha ]\smile[\partial \G] \rangle\,\bigr)$.  This shows that the path
integral vanishes unless~$\partial \G$ is trivializable as a flat
$\mathfrak{t}$-gerbe.  Relative to a trivialization the Chern-Simons
field~$P$ is a usual $T$-bundle with connection and we recover the standard
description of classical Chern-Simons.

\section{The basic tensor category and its center}\label{sec:cs6}

\subsection{Drinfeld centers}\label{cs6.1}

Let $T$ be a torus, $\tau\in H^4(BT;\bZ)$ a non-degenerate twisting.  Recall
from Proposition~\ref{thm:10} the line bundle $K\to T\times T$, which is a
2-cocycle.  Define a convolution on the category $\sky[T]$ of sky-scraper
sheaves of finite-dimensional vector spaces on $T$, with finite support, by
setting
 \[
\bC_x *\bC_y = K_{x,y}\otimes\bC_{xy}
 \]
 for the skyscrapers at $x,y,xy\in T$. The cocycle property of $K$ ensures
that this defines a tensor category, which we denote by $\sky^\tau[T]$.  This
is an analogue of the ``twisted group ring" $\vect^\tau[G]$ discussed
in~\ref{subsec:1.5} for finite groups $G$. 

The tensor structure lifts to the universal cover $\frt$ of $T$, but it is
trivializable there. This is because we can trivialize the pullback of~ $K$
as a $2$-cocycle valued in $\pic$, compatibly with our chosen trivialization
of the pullback of $L$, as in~\eqref{eq:67} and~\eqref{eq:15}.  However, we
shall see that $\sky^\tau[\frt]$ carries a higher structure, a
\emph{braiding}. This is specified by a family of automorphism $\bC_\xi
*\bC_{\xi'} \xrightarrow{\sim} \bC_{\xi'} *\bC_\xi,\;\xi ,\xi '\in
\mathfrak{t}$ forming a bi-multiplicative section of the line bundle $L$
of~\eqref{eq:15}.  In the defining trivialization, the distinguished section
is the function
  $$ \sigma (\xi,\xi') = \exp \bigl\{-\pi{i}\langle \xi,\xi'\rangle \bigr\}.$$
The structure extends in fact to the larger category $\sky^\tau[C]$ of
sky-scrapers on the correspondence space $C= (\frt\times\Lambda)/\Pi$
in~\eqref{eq:17} by a reformulation of Proposition~\ref{thm:12}(ii).
Alternatively, there is a natural quadratic function $\theta: C\to\bT$
(Remark~\ref{selfdual}(ii) below), which determines the braided tensor
structure on $\sky^\tau[C]$ by a general construction (cf.~the end of
\ref{untwisted}).  The conceptual meaning for this structure is given by
the following.

 \begin{proposition}\label{centerT}
 \begin{trivlist}\itemsep0ex
 \item(i) The braided tensor category $\sky^\tau[C]$ is the ``continuous" 
Drinfeld center of $\sky^\tau[T]$. The natural functor from 
$\sky^\tau[C]$ to $\sky^\tau[T]$ is induced by projection. 
 \item (ii) As braided tensor categories,  $\sky^\tau[C] \equiv 
\sky^\tau[\frt]\otimes\sky^\tau[\hat{F}]$, sitting in $C$ by the 
obvious inclusions, and lifting the splitting $C=\frt\times\hat{F}$ 
of abelian groups. Moreover, the two factors are mutual commutants.
 \end{trivlist}
 \end{proposition}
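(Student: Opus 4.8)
The plan is to prove (i) first, since (ii) will follow by an explicit identification of factors once the center is understood. For (i), the strategy is to compute the ``continuous'' Drinfeld center $Z(\sky^\tau[T])$ directly from Definition~\ref{thm:14}, but adapted to skyscraper sheaves with finite support. An object of the center is a pair $(W,\epsilon_W)$ where $W\in\sky^\tau[T]$ and $\epsilon_W(-)$ is a natural family of isomorphisms $W*\bC_y\xrightarrow{\sim}\bC_y*W$ for $y\in T$, compatible with the convolution. Decomposing $W$ into its skyscraper stalks $W_x$, the half-braiding $\epsilon_W$ at a skyscraper $\bC_y$ is, after unraveling the convolution formula $\bC_x*\bC_y = K_{x,y}\otimes\bC_{xy}$, precisely a collection of isomorphisms $K_{x,y}\otimes W_x \to K_{y,x}\otimes W_{yxy\inv} = K_{y,x}\otimes W_x$ (using that $T$ is abelian, so $yxy\inv=x$), i.e.\ an isomorphism $W_x\to K\inv_{x,y}\otimes K_{y,x}\otimes W_x$. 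By Proposition~\ref{thm:10}(ii) the line $K\inv_{x,y}\otimes K_{y,x}$ is canonically $L_{x,y}$, so the half-braiding data is a lift $L_{x,y}\otimes W_x\to W_x$ of the (trivial, since $T$ abelian) conjugation action, i.e.\ a bi-multiplicative trivialization of $L$ along the support of $W$. The compatibility condition $\epsilon_W(Y*Z)=(\id_Y*\epsilon_W(Z))\circ(\epsilon_W(Y)*\id_Z)$ is exactly the bi-multiplicativity in the second variable, matching \eqref{eq:19}. This identifies the stalk data of a central object with a $T$-parametrized family of splittings of the central extensions $\TT\to\hT_x\to T$ of Proposition~\ref{thm:12}(i); by Proposition~\ref{thm:12}(ii) the space of such splittings over $x$ is the $\Lambda$-torsor which is the fiber $p_1\inv(x)$ of the correspondence $C$ in~\eqref{eq:17}. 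Hence a central object with finite support is the same as a skyscraper sheaf on $C$ with finite support, i.e.\ an object of $\sky^\tau[C]$; tracking the convolution structure through Proposition~\ref{thm:12}(i) shows the tensor structures match, and the braiding on $\sky^\tau[C]$ coming from $\theta$ (Remark~\ref{selfdual}(ii)) is the one induced by the half-braidings, giving the equivalence of braided tensor categories. The statement that the natural functor $\sky^\tau[C]\to\sky^\tau[T]$ is induced by $p_1$ is then immediate: forgetting the half-braiding is pushforward along $p_1$.

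For (ii), I would use the group-theoretic splitting $C\cong\frt\oplus\hF$ established just before Notation~\ref{thm:29} (``$C$~is isomorphic to the direct sum~$\mathfrak{t}\oplus\hF$''), where $\frt\subset C$ is the fiber of $p_2$ over $0$ and $\hF$ is the image of the splitting $s$ from Proposition~\ref{thm:12}(iii). Since $C$ is a group (Proposition~\ref{thm:12}(iii)) and the tensor/braided structure on $\sky^\tau[C]$ comes from a quadratic form $\theta:C\to\bT$, it suffices to check that $\theta$ restricted to the two subgroups $\frt$ and $\hF$ recovers the stated braided structures on $\sky^\tau[\frt]$ (whose braiding is $\sigma(\xi,\xi')=\exp(-\pi i\langle\xi,\xi'\rangle)$) and on $\sky^\tau[\hF]$, and that $\theta$ restricted to a mixed pair $(\xi,\hat x)\in\frt\times\hF$ is trivial — this last is the ``mutual commutants'' claim, i.e.\ the associated bimultiplicative form vanishes on $\frt\times\hF$. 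This reduces to a direct computation with the explicit splitting formula $s(\lambda)=(\tRi(\lambda),\lambda)$ from~\eqref{eq:21} and the defining cocycle~\eqref{eq:16}: evaluating $\langle\cdot,\cdot\rangle$ on pairs $(\xi,0)$ and $(\tRi(\lambda),\lambda)$ and using $\langle\tRi(\lambda),\xi\rangle=\langle\lambda,\xi\rangle$ in the appropriate pairing, one finds the cross terms cancel against the $\Lambda$-torsor structure. A tensor category with a braiding is a commutative algebra object in the appropriate sense, and a pair of commuting subobjects generating it as a tensor product, each being the commutant of the other, is exactly the decomposition as an (internal) tensor product of braided tensor categories; so the vanishing of the mixed form gives both the tensor-product decomposition and the mutual-commutant assertion.

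The main obstacle, I expect, is making precise the ``continuous'' version of the Drinfeld center — the categories here are skyscraper sheaves with \emph{finite} support on the non-compact space $C$ and on $T$, and one must be careful that the half-braiding, a priori a datum for every $y\in T$, assembles into genuine sheaf data on $C$ rather than a more singular object. Concretely: given a central object $(W,\epsilon_W)$ with $W$ supported on a finite set $S\subset T$, the family of trivializations of $L$ over each $x\in S$ has to be shown to vary ``algebraically'' so as to pick out finitely many points of $C$ over each $x$ (which it does, because a splitting of a fixed central extension that is bi-multiplicative is rigid — the torsor $p_1\inv(x)$ is discrete, being a $\Lambda$-torsor). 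Once one grants that the half-braidings are constrained to lie in this discrete torsor — which is exactly the content of Proposition~\ref{thm:12}(ii) — the rest is bookkeeping. A secondary subtlety is checking that no \emph{additional} central objects arise from half-braidings that are not bi-multiplicative: but the compatibility axiom in Definition~\ref{thm:14} forces multiplicativity in the $T$-variable, and a symmetry argument (or the associativity constraint~\eqref{eq:11} on $K$) forces it in the other, so the center is no larger than $\sky^\tau[C]$. The remaining computations in both parts are the ``series of straightforward verifications'' already invoked for Propositions~\ref{thm:12} and~\ref{thm:10}.
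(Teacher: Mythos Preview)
Your proposal is correct and follows essentially the same approach as the paper's sketch: both identify a continuous half-braiding of $\bC_x$ with a splitting of the central extension $\hat{T}_x\to T$, hence with a point of the fiber $p_1^{-1}(x)\subset C$, and both deduce part~(ii) from the explicit splitting $C=\frt\oplus\hat{F}$ together with the vanishing of the cross-braiding. The paper's version is phrased slightly more geometrically---it introduces the alternative description $C\cong\frt^*\times_{T^*}T$ and describes $\frt,\hat{F}\subset C$ by ``sweeping out'' characters from the identity and by using the trivial holonomy of $L_{x,-}$ at $x\in F$---whereas you lean directly on Propositions~\ref{thm:10} and~\ref{thm:12} and a computation with~$\theta$; the content is the same.
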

 \noindent
 The last statement means that any $\sky^\tau[C]$-object braiding trivially
with all of $\sky^\tau[\hat{F}]$ is in $\sky^\tau[\frt]$, and similarly with
$\hat{F}$ and $\frt$ interchanged.

Regarding the notion of ``continuous Drinfeld center", we will limit
ourselves to the following observation. All our categories are semi-simple,
with the simple isomorphism classes corresponding to the points on the
underlying spaces. They are `categorifications' of the underlying abelian
groups.  We then ask that the half-braiding in Definition~\ref{thm:14} should
be continuous on irreducible objects, in their natural topology. Developing
this in more detail would be a distraction here for two reasons: first, the
key ingredient for us is not quite the Drinfeld center, which is a Hochschild
cohomology, but its dual notion, a Hochschild homology, which is a tensor
product. Second, as we briefly indicate in the next section, the easiest way
to justify our story rigorously is via approximations by \emph {finite
abelian groups}, in the time-tested ``lattice approximation" of quantum field
theory; and continuity plays no r\^{o}le in that setting.

\begin{proof}[Sketch of proof.]  

An alternative description of $C$ will be useful. Note that $\tau$ defines an
isogeny $\tau_\bT: T\to{T}^*$, the Langlands dual torus. Its kernel is the
group $F$ described in Notation~\ref{thm:29}.  Interpreting $T^*$ as the
moduli space of flat line bundles on $T$, the map~$\tau_{\bT}$ classifies the line
bundle $L$. Then,
 \[
C \cong \frt^*\underset{T^*}{\times}T. 
 \]
To identify this fiber product over $T^*$ with the standard description 
$(\frt\oplus\Lambda)/\Pi$, send $(\xi,\lambda)$ in the latter space 
to $(d\tau(\xi)-\lambda,e^\xi)$ in the former. 

In the second description of $C$, we interpret a point $x\in{T}$ as the
sky-scraper object $\bC_x$ and an element in the fiber of $\tau_{\bT}(x)$ as
a continuous character of the central extension of the group $T$ defined by
$L_{x,-}$; see Proposition~ \ref{thm:12}(ii). But this is precisely a
continuous half-braiding of $\bC_x$ with the simple objects of
$\sky^\tau[T]$. It is easy to see a priori that simple objects of the center
must be supported at single points, and then that they have rank one, so we
have just found all of them.

If we start at the identity with the trivial braiding, and keep a continuous 
choice of character as we move in $\frt$, we sweep out the identity copy 
of $\frt$ in $C$. The projective characters of $T$ thus swept out have 
the property that they are trivial on $F\subset T$ (where every central 
extension in the family is naturally trivialized).
On the other hand, $L_{x,-}$ has trivial holonomy at the 
points $x\in F$, and there we can choose the trivial character in $\frt^*$. 
This defines the copy $\hat{F}$ of $F$ in $C$. Clearly, this braiding \emph
{commutes} with the braiding by the copy of $\frt$ just described. 

Non-degeneracy of the quadratic form on $\frt$ implies that its commutant 
is no larger than $\sky^\tau[F]$. That the two in fact commute, and the 
analogous statement for $F$, follow from the formula for the ribbon element 
below. One can also argue directly that the braiding is defined by the 
perfect bi-character on $C$ described in Remark~\ref{selfdual}.
\end{proof}

\begin{remark}\label{selfdual}
\begin{trivlist}\itemsep0ex
\item (i) The two descriptions of $C$ make clear its remarkable property 
of being Pontrjagin self-dual: the groups $\frt^*\times{T}$ and 
$\frt\times \Lambda$ are Pontrjagin dual, and the kernel of the addition 
map of the first group to $T^*$ is dual to the quotient of the second by 
the dual inclusion of $\Pi$. This self-duality is symmetric, and is 
induced by the quadratic ``ribbon" map $\theta:C \to \bT$ below.
\item (ii) The braided tensor category $\sky^\tau[C]$ and its sub-categories
$\sky^\tau[\hat{F}], \sky^\tau[\frt]$ are in fact \emph{ribbon categories} 
with ribbon function on the simple object $(\xi,\lambda)$
 \[
\theta(\xi,\lambda) =  
\exp\,\pi{i} \bigl\{\|\tau^{-1}(\lambda)\|^2 - \|\xi-\tau^{-1}(\lambda)\|^2 
\bigr\},
 \]
 using the norm associated to the quadratic form~\eqref{eq:9}.  The square of
the braiding is given by the standard formula $\theta(XY)\theta^{-1}
(X)\theta^{-1}(Y)$.

\item (iii) The equivalence in \ref{centerT}(ii) is \emph{not} one of
\emph{ribbon} categories, because the ribbon function is quadratic and not
linear on objects.

\item (iv) One should mind that the tensor structure defined by the
restriction~ $\tau\in H^4(BF;\bZ)$ is not necessarily trivial, that is,
$\sky^\tau[F]$ may differ from $\sky[F]$ as a tensor category. For instance,
this always happens when $T=S^1$. But $\tau$ is always $2$-torsion on $F$.

\item (v) The category $\sky^\tau[\hF]$ is in fact a \emph{modular tensor
category} \cite{BK, T} and defines the 1-2-3-dimensional Chern-Simons theory
(of framed manifolds) associated to the torus $T$ at level $\tau$ via the
Reshetikhin-Turaev theorem.
 \end{trivlist}\end{remark}

\subsection{Finite approximation of $T$ and $\frt$}\label{subsec:c6.2}
Let us now describe the finite (``lattice'') approximations of $C, 
\frt, T$ and develop the finite version of Chern-Simons theory in the 
next section. Let $n$ be positive integer, which will become infinitely 
divisible in the limit\footnote{That is, we'll take the limit over $\mathbb{N}$ 
ordered by divisibility.}; denote by $T^*_{(n)}\subset T^*$ the subgroup 
of $n$-torsion points and by $T^{(n)}\to T$ the dual covering torus 
of $T$, with Galois group $\Pi^{(n)}\cong \Pi/n\Pi$, Pontrjagin dual to
$T^*_{(n)}$. 
Finally, let $T_{(nF)}$ denote the inverse image of $T^*_{(n)}$ 
in $T$ under $\tau_\bT$ and $\frt^{(n)}$ that of $T_{(nF)}$ in $T^{(n)}$. 
There is also the Pontrjagin dual $\frt^{*(nF)}$ of $\frt^{(n)}$, which 
is a Galois cover of $T^*_{(n)}$ (restricted from $T^*$) with group  
$\Lambda^{(nF)}$, Pontrjagin dual to $T_{(nF)}$. It is an exercise to 
check that the restriction $\tau_\bT: T_{(nF)} \to T^*_{(n)}$ lifts naturally
to an isomorphism $\frt^{(n)} \cong \frt^{*(nF)}$.

As $n\to\infty$, the finite group $T_{(n)}$ will be our approximation 
for $T$, $\frt^{(n)}$ will approximate the Lie algebra, $\frt^{*(nF)}$ its 
dual, $T^*_{(n)}$ will play the r\^ole of $T^*$, $\Pi^{(n)}$ that of $\Pi$ 
and $\Lambda^{(nF)}$ that of $\Lambda$.  
\begin{remark}
If $d$ is the dimension of $T$, then 
$\#T^*_{(n)} = n^d$, $\#T_{(nF)} = \#F\cdot n^d$, $\#\frt^{(n)} = \#F\cdot
{n}^{2d}$. 
\end{remark}

The twisting $\tau$ restricts to $H^4(BT_{(n)};\bZ)$ and defines a tensor
category $\mathrm{Vect}^\tau(T_{(n)})$ as discussed in ~\ref{subsec:1.5},
which is a full subcategory of $\sky^\tau[T]$. Consider
 \[
C^{(n)} := \left(\frt^{(n)} \oplus \Lambda^{(nF)}\right)/\Pi^{(n)} 
 \cong \frt^{*(nF)} \underset{T^*_{(n)}} \times T_{(nF)}.
 \]
 It is easy to check that the ribbon function $\theta$ of Remark~\ref
{selfdual}(ii) descends to a non-degenerate quadratic function on $C^{(n)}$,
and gives a braided tensor structure on $\sky^\tau[C^{(n]})$, with the
restricted twisting $\tau$). The projection $\frt^{*(nF)} \cong \frt^{(n)}
\to T_{(n)}$ defines a splitting $C^{(n)} \cong \frt^{(n)} \times \hat{F}$.
The proof of the following, discrete analogue of Proposition~\ref{centerT} is
left to the reader.  

\begin{proposition}\label{approxcenter}
\begin{trivlist}\itemsep0ex \item(i) The braided tensor category
$\sky^\tau[C^{(n)}]$ is the Drinfeld center of $\sky^\tau[T_{(nF)}]$. The
natural functor from $\sky^\tau[C^{(n)}]$ to $\sky^\tau[T_{(n)}]$ is induced
by projection.  \item (ii) As braided tensor categories, $\sky^\tau[C^{(n)}]
\equiv \sky^\tau[\frt^{(n)}]\otimes\sky^\tau[\hat{F}]$, sitting in $C^{(n)}$
by the obvious inclusions, and lifting the splitting $C^{(n)}
=\frt^{(n)}\times{F}^*$ of abelian groups. Moreover, the two factors are
mutual commutants.
\end{trivlist}
\end{proposition}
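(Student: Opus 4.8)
The plan is to mirror the proof of Proposition~\ref{centerT} step by step, replacing the Lie group $T$ and its Lie algebra $\frt$ by the finite groups $T_{(nF)}$ and $\frt^{(n)}$, and to observe that every topological ingredient in the continuous argument (continuity of half-braidings, integration over $\frt$) is replaced by a purely algebraic one, which only makes matters easier. First I would set up the two descriptions of $C^{(n)}$, exactly as in the continuous case: the ``standard'' presentation $(\frt^{(n)}\oplus\Lambda^{(nF)})/\Pi^{(n)}$ and the fiber-product presentation $\frt^{*(nF)}\times_{T^*_{(n)}}T_{(nF)}$, with the isomorphism between them given by the discrete analogue of $(\xi,\lambda)\mapsto(d\tau(\xi)-\lambda, e^\xi)$; here one uses the isomorphism $\frt^{(n)}\cong\frt^{*(nF)}$ noted just above the statement, which is the finite shadow of the nondegeneracy of $\tR$. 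This is a routine diagram chase in finite abelian groups.

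Next I would identify the Drinfeld center of $\sky^\tau[T_{(nF)}]$. Since $\sky^\tau[T_{(nF)}]$ is semisimple with simple objects the skyscrapers $\bC_x$, $x\in T_{(nF)}$, a standard argument (the same one invoked in the proof of \ref{centerT}) shows that a simple object of the center is supported at a single point $x$ and is of rank one, so that an object of $Z(\sky^\tau[T_{(nF)}])$ supported at $x$ is precisely a half-braiding of $\bC_x$ with all simple objects of $\sky^\tau[T_{(nF)}]$, i.e.\ a character of the central extension of $T_{(nF)}$ classified by $L_{x,-}$ — equivalently, by Proposition~\ref{thm:12}(ii) read in the finite setting, an element of the fiber of $\tau_\bT$ over $\tau_\bT(x)\in T^*_{(n)}$. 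Assembling these fiberwise over $T_{(nF)}$ produces exactly the fiber product $\frt^{*(nF)}\times_{T^*_{(n)}}T_{(nF)}\cong C^{(n)}$, and one checks the tensor and braiding data match those coming from the ribbon function $\theta$ of Remark~\ref{selfdual}(ii) restricted to $C^{(n)}$; this gives part~(i), together with the assertion that the forgetful functor to $\sky^\tau[T_{(n)}]$ is induced by projection. The relevant finiteness (finite-dimensional Hom-spaces, semisimplicity) is automatic here because all the groups are finite, so there is nothing to check of the kind that was subtle in~\S\ref{sec:3}.

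For part~(ii), I would use the splitting $C^{(n)}\cong\frt^{(n)}\times\hat F$ coming from the projection $\frt^{*(nF)}\cong\frt^{(n)}\to T_{(n)}$ (this is the finite version of the splitting $C=\frt\times\hat F$ furnished by $s$ in Proposition~\ref{thm:12}(iii) and Notation~\ref{thm:29}). That this splitting of abelian groups lifts to a tensor — indeed braided tensor — decomposition $\sky^\tau[C^{(n)}]\equiv\sky^\tau[\frt^{(n)}]\otimes\sky^\tau[\hat F]$ follows from the fact that $\theta$, being quadratic, restricts compatibly to the two summands and that its associated bicharacter on $C^{(n)}$ is the product of those on the factors. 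The mutual-commutant statement is then the nondegeneracy of this bicharacter on each factor: the discrete quadratic form on $\frt^{(n)}$ is nondegenerate (this is precisely the content of the lift $\frt^{(n)}\cong\frt^{*(nF)}$), so its commutant inside $\sky^\tau[C^{(n)}]$ is no larger than $\sky^\tau[\hat F]$, and symmetrically for $\hat F$; since the two factors visibly do commute (their bicharacters pair them trivially), they are exact mutual commutants.

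The main obstacle — really the only nonformal point — is the identification in part~(i) that \emph{all} simple objects of the center arise this way, i.e.\ that one has genuinely exhausted the center rather than merely exhibited a subcategory. In the continuous case this rested on a continuity constraint on half-braidings; in the finite case the correct replacement is a counting/rank argument: one shows that the objects constructed have the right total quantum dimension (equivalently, that $\#C^{(n)}=\#F\cdot n^{2d}$ matches the expected size of the center of $\sky^\tau[T_{(nF)}]$, namely $(\#T_{(nF)})\cdot(\#T^*_{(n)})/\dots$ bookkeeping that the forgetful functor is a ``$\#F$-to-one'' cover appropriately weighted), so no simple objects of the center are missed. Once that bookkeeping is in place — and it is exactly the content of the Remark preceding the Proposition — everything else is a transcription of the argument for Proposition~\ref{centerT} with integrals replaced by finite sums, which is why the authors leave it to the reader.
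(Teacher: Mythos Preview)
Your proposal is correct and follows exactly the approach the paper intends: the paper explicitly leaves this proof to the reader as the discrete analogue of Proposition~\ref{centerT}, and you have carried out precisely that transcription.

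One small comment on your ``main obstacle'': you slightly misplace where continuity entered in the proof of Proposition~\ref{centerT}. Continuity was not used to show that the constructed objects \emph{exhaust} the center; rather, it was a restriction on which half-braidings are admitted in the first place (defining the ``continuous'' center). The exhaustion in both cases follows directly from the observation that simple objects of the center must be supported at a single point and have rank one---then the half-braidings on $\bC_x$ are \emph{by definition} the characters of the central extension $\hat T_x$ (or its finite version), and you have listed all of them. So no counting or dimension bookkeeping is needed in the finite case; the argument is if anything simpler, not because a counting substitute works, but because the continuity constraint is simply absent and every character counts.
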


\subsection{Morita relations between our categories}\label{cs6.3}
The categories discussed in this section are closely related. Just as 
the right notion of quasi-isomorphism for algebras in their natural 
world is Morita equivalence, there is a corresponding notion for 
tensor categories and even braided tensor categories. In the next 
section, we sketch a minimal background for these notions; but let us 
now state a key result and its significance for TQFTs.  

\begin{proposition}\label{morita}
\begin{trivlist}\itemsep0ex
\item (i) $\sky^\tau[\frt]$ and $\sky^\tau[\hat{F}]$ are Morita equivalent 
by means of $\sky^\tau[T]$.
\item (ii) $\sky^\tau[\frt^{(n]})$ and $\sky^\tau[\hat{F}]$ are Morita 
equivalent by means of $\sky^\tau[T_{(n)}]$. 
\end{trivlist}
\item(iii) All these BTCs are quasi-invertible, more precisely,
$\sky^\tau[S]\otimes\sky^{-\tau}[S]$ is Morita equivalent to $\vect$ in all
cases $S=\mathfrak{t}, \hF, \frt^{(n)}$ (with $(-\tau)$ indicating the
opposite braiding).
\end{proposition}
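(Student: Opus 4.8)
The plan is to deduce all three parts from the braided-category version of the cobordism-hypothesis philosophy: in the $4$-category $\C$ whose objects are braided tensor categories, the statement ``$\A$ and $\A'$ are Morita equivalent by means of $\D$'' means precisely that $\D$ is an invertible $1$-morphism from $\A$ to $\A'$, i.e.\ an $\A$-$\A'$ bimodule (tensor) category whose composite with its dual bimodule recovers $\A$ (resp.\ $\A'$) up to equivalence. So parts (i) and (ii) both amount to checking that $\sky^\tau[T]$ (resp.\ $\sky^\tau[T_{(n)}]$), viewed as a bimodule category, implements an invertible correspondence between the two braided tensor categories attached to its ``Drinfeld-center factorization''. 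First I would make precise the bimodule structure: by Proposition~\ref{centerT}(i) the category $\sky^\tau[C]$ is the continuous Drinfeld center of $\sky^\tau[T]$, and by Proposition~\ref{centerT}(ii) it splits as $\sky^\tau[\frt]\otimes\sky^\tau[\hF]$ with the two tensor factors mutual commutants. The mutual-commutant statement is exactly what says that $\sky^\tau[T]$ is simultaneously a module category over $\sky^\tau[\frt]$ and over $\sky^\tau[\hF]$ with commuting actions, realizing each as the full centralizer of the other inside $Z(\sky^\tau[T])$. That is the definition of a Morita (bi)module witnessing the equivalence of the two braided tensor categories; the discrete analogue for part (ii) is identical, using Proposition~\ref{approxcenter} in place of Proposition~\ref{centerT}.

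For part (iii), I would argue as follows. For each of $S=\frt,\hF,\frt^{(n)}$, the category $\sky^\tau[S]$ is a semisimple braided tensor category whose simple objects are indexed by the points of $S$, with braiding and ribbon element determined by the nondegenerate quadratic function $\theta$ of Remark~\ref{selfdual}(ii) restricted to the relevant subgroup. Forming $\sky^\tau[S]\otimes\sky^{-\tau}[S]$ replaces $\theta$ by $\theta\boxtimes\theta^{-1}$ on $S\times S$. The diagonal copy $\Delta S\subset S\times S$ is isotropic for this quadratic function (the two ribbon contributions cancel), and — crucially — it is a \emph{Lagrangian} subgroup: its orthogonal complement with respect to the associated bicharacter is exactly $\Delta S$ again, because the bicharacter on $S$ coming from $\langle-,-\rangle$ is perfect (nondegeneracy of the form, and the analogous Pontrjagin-self-duality of $C$ and its finite approximations noted in Remark~\ref{selfdual}(i)). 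A braided tensor category of the form $\sky^{Q}[A]$ for a pre-metric group $(A,Q)$ is Morita-trivial — equivalent to $\vect$ as an object of $\C$ — precisely when it admits a Lagrangian subgroup on which $Q$ restricts trivially; the associated module category is $\sky$ supported on that Lagrangian. So I would produce the required Morita equivalence to $\vect$ by exhibiting $\Delta S$ as such a Lagrangian and taking the corresponding skyscraper module category over $\sky^\tau[S]\otimes\sky^{-\tau}[S]$.

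The key steps, in order: (1) set up the $4$-category $\C$ of braided tensor categories and spell out that ``Morita equivalent by means of $\D$'' = ``$\D$ is an invertible $1$-morphism''; (2) translate the mutual-commutant clauses of Propositions~\ref{centerT}(ii) and~\ref{approxcenter}(ii) into the statement that $\sky^\tau[T]$ and $\sky^\tau[T_{(n)}]$ are the desired invertible bimodule categories, giving (i) and (ii); (3) for (iii), identify $\sky^\tau[S]\otimes\sky^{-\tau}[S]$ with the pre-metric group $(S\times S,\,\theta\boxtimes\theta^{-1})$, check that $\Delta S$ is an isotropic subgroup, and then check it is Lagrangian using nondegeneracy of $\langle-,-\rangle$; (4) invoke (or prove in this semisimple setting) the principle that a pre-metric-group braided category with a Lagrangian is Morita-trivial, with the skyscraper-on-$\Delta S$ category as explicit witness. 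The main obstacle I expect is step (4) together with the continuous cases: the ``Lagrangian $\Rightarrow$ Morita-trivial'' dictionary is standard for \emph{finite} pre-metric groups, but here $S=\frt$ (and $C$ itself) is a real vector space with the discrete topology, so one must either push the ``continuous Drinfeld center'' formalism far enough to make the invertibility statement rigorous there, or — as the authors hint in \ref{cs6.3} and \ref{subsec:c6.2} — reduce everything to the finite lattice approximations $\frt^{(n)}$, prove (iii) there, and pass to the limit over $n$ ordered by divisibility. I would take the latter route: establish (iii) for $\frt^{(n)}$ and $\hF$ by the finite Lagrangian argument, and treat the $\frt$ case as the colimit, noting that continuity ``plays no rôle'' in the lattice model exactly as remarked after Proposition~\ref{centerT}.
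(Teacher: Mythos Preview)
Your proposal is correct, and for parts (i)--(ii) it lands on essentially the same mechanism as the paper, though you package it more abstractly. Where you invoke ``mutual commutants inside the Drinfeld center $\Rightarrow$ invertible bimodule,'' the paper verifies this directly: it writes out the two required Morita equivalences
\[
\sky^\tau[T_{(n)}]\underset{\sky^\tau[\frt^{(n)}]}\bigotimes \sky^\tau[T_{(n)}] \sim \sky^\tau[\hat F],\qquad
\sky^\tau[T_{(n)}]\underset{\sky^\tau[\hat F]}\bigotimes \sky^\tau[T_{(n)}] \sim \sky^\tau[\frt^{(n)}],
\]
and checks each by identifying the iterated tensor product (with $\sky^\tau[T_{(n)}]$ again as the Morita object) with the \emph{relative} Drinfeld center of $\sky^\tau[T_{(n)}]$ over one factor, then reading off the other factor from Proposition~\ref{approxcenter}. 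So the paper is, in effect, proving in this instance the abstract principle you cite; your route is cleaner if one is willing to import that principle, while the paper's is more self-contained.

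For part (iii) the two arguments genuinely diverge. You run a Lagrangian-subgroup argument on the pre-metric group $(S\times S,\,\theta\boxtimes\theta^{-1})$, exhibiting the diagonal $\Delta S$ as Lagrangian and invoking the ``Lagrangian $\Rightarrow$ Morita-trivial'' dictionary, with the skyscraper category on $\Delta S$ as witness. The paper instead reuses the relative-center technique from (i)--(ii): it takes $\sky^\tau[S]$ with its left-right self-action as the Morita bimodule over $\sky^\tau[S]\otimes\sky^{-\tau}[S]$, and shows the relevant iterated tensor product is the relative center of $\sky^\tau[S]$ over $\sky^\tau[S]\otimes\sky^{-\tau}[S]$, which by nondegeneracy is $\vect$. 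The two witnesses coincide (your module category on $A/\Delta S\cong S$ is exactly $\sky^\tau[S]$), but the verifications differ: yours appeals to the structural Witt-group criterion for pointed braided categories, theirs stays within the uniform Hochschild/relative-center computation used throughout. Your handling of the continuous case $S=\frt$ via the lattice limit matches the paper's stated strategy.
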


Thus, $\sky^\tau[\frt]$ and $\sky^\tau[\frt^{(n)}]$ are quasi-invertible 
in their world. Moreover, they are equivalent to the unit braided tensor 
category $\vect$ in the following cases:
\begin{itemize}
\item When the signature of $\langle -,-\rangle$ on $\frt$ is divisible 
by $8$; 
\item When \emph{$\bZ/2$-graded} vector spaces are used.
\end{itemize}
In the first case, we apply Proposition~\ref{morita}(i) to a product $T$ of
copies of the $E_8$ maximal torus, with its generating class $\tau$;
unimodularity of the $E_8$ lattice ensures that $F=\{1\}$. The second case
uses a variant of our categories and twistings with graded vector spaces,
which we have not discussed. In this case, with $T=\mathrm{U}(1)$, the graded
tensor category $\mathrm{Gr-}\vect(\mathrm{U}(1))$ can be twisted by a class
which is \emph{half}\footnote{This half-generator is defined in the
generalized cohomology theory $h$ of Remark~\ref{thm:4}; this leads to a TQFT
for spin, rather than oriented manifolds.}
 the generator of $H^4(B\mathrm{U}(1);\bZ)$, 
and leads again to a trivial group $F$. (The group $F$ corresponding to 
the generator of $H^4(B\mathrm{U}(1);\bZ)$ has two elements.) 

The braided tensor categories above lead to \emph{invertible} $4$-dimensional
TQFTs for oriented manifolds: these are the theories described in
\ref{subsec:6.1}.  Invertibility has the obvious meaning, in the tensor
structure on TQFTs, and follows from Proposition~\ref{morita}(iii). Graded
vector spaces, however, require spin structures on the manifold. We conclude
that the (isomorphic) theories defined by $\sky^\tau[\frt]$ and $\sky^\tau
[\frt^{(n)}]$ are trivial on oriented manifolds when the signature of $\tau$
is a multiple of $8$, and always trivial on spin manifolds.\footnote {Some
structure has been swept under the carpet here. Defining the theory for
oriented or spin manifolds, rather than framed ones, requires us to specify
an action of $\mathrm{SO}(4)$ or $\mathrm{Spin}(4)$, which adds some
parameters. One of those is the constant $\lambda$ coupled to the Euler class
in~\eqref{eq:57}.}

\section{Higher algebra: tensor and bi-module categories}\label{sec:c1}

\subsection{$m$-Algebras} \label{lowdim}

Our picture of Chern-Simons theory requires an ascent to categorical 
altitudes which exceed the safe limit without special equipment: we 
will need linear $3$-categories describing (very simple) 
$4$-dimensional theories. Fortunately, there is a recursive procedure 
to produce the higher categorical objects needed and, even more fortunately,
up to the range of dimensions we study there are user-friendly models for 
these. We hope to return to an extensive discussion of these structures 
elsewhere; here we sketch their basic features.

Recall first that vector spaces and linear maps form a symmetric 
tensor category, and that algebra objects in a symmetric tensor 
category form in turn a symmetric tensor category (of one level 
higher, but who's counting?).

\begin{definition}
 A \emph{$0$-algebra} is a complex vector space, and a morphism of
$0$-algebras is a linear map.  For $m> 0$, an $m$-algebra is an algebra
object in the (symmetric tensor $m$-) category of $(m-1)$-algebras.  Morphism
between $m$-algebras are bi-module objects in the category of
$(m-1)$-algebras. (Higher morphisms are defined recursively.)
\end{definition}

 The $2$-category of ($1$-)algebras, modules and intertwiners is familiar
enough---see~\ref{alg}---and here we just give analogous pictures (or
adequate substitutes) for the next two levels. A $2$-algebra is an algebra
$A$ together with an $(A,A^{\otimes 2})$-bi-module $M$ defining the
$2$-multiplication and a left $A$-module $E$ defining the identity; there
must be the associativity and unit intertwiners ($2$-morphisms in the
category of algebras) satisfying the `obvious' compatibility rules. A
morphism of $2$-algebras, from $A$ to $B$, is a $(B,A)$ bi-module in the
world of algebras: an algebra~$N$, plus an $(N,B\otimes{N})$-bi-module $P$
and an $(N,N\otimes{A})$-bi-module $Q$, plus compatibility isomorphisms
defining the bi-algebra structure.

In general, a $2$-algebra structure on $A$ defines a tensor structure on the
category of left $A$-modules; sending $A \mapsto (A\mathrm{-Mod})$ is a fully
faithful functor from the $3$-category of $2$-algebras to that of tensor
categories. Modulo the problem of realizing linear categories as module
categories, one can use tensor categories instead of $2$-algebras. This, of
course, is what we have been doing.

For example, any commutative algebra becomes a $2$-algebra via its own
multiplication map; a morphism between commutative algebras $A$ and $B$,
viewed as $2$-algebras, is simply a $B\otimes A$-algebra.  However,
\emph{commutative} algebras have a more obvious and economical embedding into
tensor categories, sending $A$ to the tensor category with one object and
endomorphism ring $A$.  We will write $A[2]$ when thinking of $A$ as a
$2$-algebra.

More relevant is the example of a Hopf algebra $H$, which becomes a
$2$-algebra by means of the multiplication $M:= H\otimes{H}$, which is a left
$H$-module via the Hopf structure.  For the Hopf algebra $A=\bC(G)$ of a
finite group with the point-wise multiplication, but co-multiplication
induced by the Hopf structure, the category $(A\mathrm{-Mod})$ is that of
vector bundles over $G$; and the tensor category defined by the Hopf
$2$-algebra structure is precisely the earlier $\vect[G]$
in~\ref{subsec:1.5}. A twisting class $\tau\in H^4(BG;\bZ)$ changes the
associator of the $2$-multiplication.

One level up, a $3$-algebra structure on $A$ can be interpreted as 
follows: the $2$-algebra structure turns $(A\mathrm{-Mod})$ into a 
tensor category, having a $2$-category of modules; and the $3$-algebra 
structure gives a \emph{tensor structure} on the latter $2$-category. 
(This, in turn, has a linear $3$-category of modules, affirming the 
$3$-categorical nature of a $3$-algebra.) 
For example, if $A$ is commutative, two $(A\mathrm{-Mod})$-modules can 
be tensored over $(A\mathrm{-Mod})$ to produce a new $(A\mathrm{-Mod})
$-module, since the latter is a symmetric tensor category. Similarly, 
in the case of the group ring, $\vect[G]$-modules can be tensored over 
$\vect[G]$, using the Hopf structure on the latter. This is usually 
not possible for the twisted versions $\vect^\tau[G]$, where the Hopf 
structure is broken by the twisting; this is much like the case of 
twisted group rings over $\bC$ defined from $\bT$-central extensions 
of the group, which do not usually have $2$-algebra promotions. 

\subsection{Braided tensor categories}  \label{cs7.2}

Manipulating tensor $2$-categories can be rather daunting, and no doubt the
most general $3$-algebras are no friendlier; but in some cases they can be
captured by a more concise structure, namely a \emph{braided tensor category}
(BTC). These are special kinds of algebra objects in the category of all
tensor categories, and represent the simplest type of structure on the
category $(A\mathrm{-Mod})$ which promotes $A$ to a $3$-algebra. For example,
each braided tensor category $\sky^\tau[S]$ of the previous section promotes
the algebra of functions on $S$, with \emph{pointwise} multiplication, into a
$3$-algebra.\footnote{The cases of $S=\frt$ and $S=C$ require \emph{a priori}
a grain of salt, because we are considering very special modules, but in fact
the structure supplied does give a $3$-algebra in each case, albeit with poor
finiteness conditions.} As we will see, these higher structures are the
natural result of \emph{quantization}.

Even if we restrict to BTC among all algebra objects in the symmetric 
$3$-category of tensor categories, the description of all module 
objects---hence the description of bi-module objects, which are 
the morphisms in the $4$-category of BTCs---can be awkward; see the
case of $2$-algebras above. However, there is a nice class of special 
module objects $M$, those for which the action of $B$ on $M$ is 
defined by a functor $B\otimes{M} \to M$. This must be a \emph
{tensor functor}, which forces the action of $B$ to half-braid with 
the multiplication on $M$; in other words, the algebra map $B\to M$ 
induced by tensoring with $1\in M$ must lift to a braided tensor 
functor into the Drinfeld center $Z(M)$. We call these `$B$-modules' 
\emph{half-braided algebras} over $B$. The reader should think of 
the analogy with a commutative $1$-algebra $A$ and the $2$-category 
of central algebras over it; here, the braiding can be thought of 
as a homotopy between left and right multiplications.    

In spite of these difficulties, the following simplified four-category 
$\mathcal{C}$ will be a suitable replacement the $4$-category of $3$-algebras. 
Objects of $\mathcal{C}$ will be braided tensor categories; morphisms 
from $A$ to $B$ will be half-braided $(B,A)$ bi-algebra categories, 
that is, tensor categories $M$ with a braided tensor functor $B\otimes 
A^{op}$ to $Z(M)$. $2$-morphisms will be bi-modules between bi-algebras 
(with compatible $B\otimes {A}^{op}$-action), and then functors 
and natural transformations round this up. 

\subsection{Quasi-isomorphisms}
The key moral of the story is that the correct notion of equivalence 
becomes increasingly obscure for higher algebras, although not less precise. 
The correct notion is always a pair of functors $f,g$ with $f\circ{g}$ and 
$g\circ{f}$ both equivalent to the respective identities.\footnote{This 
is not circular, since it relies on the lower-algebra definition of 
equivalence.} Thus, for $1$-algebras we have Morita equivalence. 
For $2$-algebras, the familiar Morita conditions 
 \[
P\underset{R}\otimes Q \equiv S, \qquad Q\underset{S}\otimes P \equiv R
 \]  
 become Morita equivalences themselves, as $P$ and $Q$ are now bi-algebras.
This continues, as we now illustrate in the proof of
Proposition~\ref{morita}.  Recall for this purpose the braided tensor
categories $\sky^\tau[\hat{F}]$, $\sky^\tau[\frt]$, $\sky^\tau[C]$ from the
previous section, the latter two accompanied by their finite approximations
$\sky^\tau[\frt^{(n)}]$ and $\sky^\tau[C^{(n)}]$.  In addition, the tensor
categories $\sky^\tau[T]$ and $\sky^\tau[T_{(n)}]$ are half-braided
bi-algebras over the first three, respectively their finite versions. This
makes them into $1-$morphisms in the $4$-category $\mathcal{C}$.

\begin{proof}[Sketch of proof of Proposition \ref{morita}.]
We prove part (ii), part (i) is similar. We must produce Morita 
equivalences  
 \begin{equation}\label{morita2}
 \begin{split}
\sky^\tau[T_{(n)}]\underset{\sky^\tau[\frt^{(n)}]}\bigotimes \sky^\tau[T_{(n)}] 
\sim \sky^\tau[\hat{F}],  \\
\sky^\tau[T_{(n)}]\underset{\sky^\tau[\hat{F}]}\bigotimes \sky^\tau[T_{(n)}] 
\sim \sky^\tau[\frt^{(n)}],
 \end{split}  
 \end{equation}
and similarly for $\frt$ and $T$. The Morita objects realizing~\eqref{morita2}
are $\sky^\tau[T_{(n)}]$ in all cases (respectively, $\sky^\tau[T]$ for part
(i)). Indeed, the desired identity is
 \[
\sky^\tau[T_{(n)}]\underset{\left(\sky^\tau[T_{(n)}]
\underset{\sky^\tau[\frt^{(n)}]}\otimes \sky^\tau[T_{(n)}]\right)}
\bigotimes \sky^\tau[T_{(n)}] \equiv \sky^\tau[\hat{F}],
 \] 
 and the obvious permutations. Were we to tensor over the braided tensor
category $\sky^\tau[T_{(n)}]\otimes \sky^\tau[T_{(n)}]$ instead, the left
side would be the Hochschild homology $A\underset{A\otimes{A}}\otimes{A}$.
By semi-simplicity (or rather, self-duality of $\sky^\tau[T_{(n)}]$ as a
self-bi-module), this would give (a linear category equivalent to) the
Drinfeld center $\sky^\tau[C^{(n)}]$ of $\sky^\tau[T_{(n)}]$.  Instead, the
effect of working over the tensor product
 \[
\sky^\tau[T_{(n)}] \underset{\sky^\tau[\frt^{(n)}]}\otimes 
\sky^\tau[T_{(n)}]
 \]  
 is to pick the \emph{relative} Drinfeld center of $\sky^\tau[T_{(n)}]$ over
$\sky^\tau[\frt^{(n)}]$, the commutant of $\sky^\tau[\frt^{(n)}]$ in the full
center $\sky^\tau[C^{(n)}]$. By Proposition~\ref{approxcenter}, this is
$\sky^\tau[\hat{F}]$.  Exchanging~$\hat{F}$ and~$\frt^{n}$ and repeating the
argument we obtain the second identity in~\eqref{morita2}.

Regarding Part (iii), this is a similar argument. Noting that $\sky^{-\tau}
[S]$ can be identified with the opposite braided tensor category of 
$\sky^\tau[S]$, we will use the tensor category $\sky^\tau[S]$ with 
its left-right action on itself as Morita bi-module. Furthermore, 
we use $\sky^\tau[S]$ again to produce the Morita equivalence of
 \[
\sky^\tau[S]\underset{\sky^\tau[S]\otimes \sky^{-\tau}[S]}\bigotimes
\sky^\tau[S] 
 \] 
with $\vect$. Indeed, after identifying one copy of $\sky^\tau[S]$ with the
dual category, 
the tensor square of $\sky^\tau[S]$ over the left category is the relative 
Drinfeld center of $\sky^\tau[S]$ \emph{over $\sky^\tau[S]\otimes \sky^
{-\tau}[S]$}. However,  the center is the double $\sky^\tau[S]\otimes 
\sky^{\tau}[S^*]$, and because $\tau$ gives an isomorphism $S\cong S^*$, 
this double is nothing but $\sky^\tau[S]\otimes \sky^{-\tau}[S]$ again, 
with its half-braided tensor action on $\sky^\tau[S]$. Non-degeneracy 
of $\tau$ again ensures that the identity is the only simple object 
braiding trivially with everything; therefore the relative center is 
$\vect$, as desired.   
\end{proof}

\section{Quantization of groupoids} \label{cs8}

In this section, we outline a ``canonical quantization" procedure starting
from a ``classical'' topological quantum field theories with target higher
groupoids (satisfying some finiteness conditions).  In other words, we
elaborate on the map~$\Sum_n$ in~\eqref{eq:3.9}.  For our purposes, ``higher
groupoids" are spaces, with $0$-groupoids being discrete sets and ordinary,
or $1-$groupoids being homotopy $1-$types.  The case~$n=2$ is used
in~\ref{subsec:6.1}.  The constraint is finiteness of the homotopy groups. We
hope to develop the full story elsewhere; here we outline the construction
while flagging the cases of immediate interest.

There is a quantization procedure for each $n\ge 0$, leading to a TQFT in the
respective dimension. At its root is a linearization functor from spaces to
higher algebras. (This works without finiteness.)  This ``higher groupoid
ring" is an $m$-algebra, which is what a TQFT in dimension $n= m+1$ assigns
to a point. The construction enhances the group algebra of a finite group
($m=1$), or even more basically, of the vector space of functions on a finite
set ($m=0$).  The discrepancy between $m$ (the algebra level) and $n$ (the
TQFT dimension)---for which we apologize---is caused by the fact that the
term $n$-vector space has been used for a much more restrictive notion than
our $(n-1)$-algebras.

The $m$-algebra associated to a (connected, pointed) space $X$ has as its
category of modules the $m$-category of representations of the based loop
space $\Omega{X}$ on $(m-1)$-algebras; for disconnected spaces, we must sum
over components. The dependence on base points can be removed by viewing its
representations as the category of local systems of $(m-1)$-algebras over
$X$. We will write $R_m(X)$ for the groupoid algebra when not relying on
base-point choices.

There is an untwisted construction, leading to a TQFT of unoriented manifolds, 
and a construction with twists by a phase, which requires orientations.

        \begin{notation}[]\label{thm:33}
 Let $G$~be a group and $A$~an algebra.  Then $A[G]$~denotes the group
algebra of~$G$ under convolution and $A(G)$ the $A$-valued functions on ~$G$
under pointwise multiplication.
        \end{notation}

\subsection{Low $m$ examples}\label{untwisted}
Let X be a space, which for our application may be assumed to have 
finitely many nonzero homotopy groups, each of which is finite. We now 
produce a candidate for the $m$-algebra $R_m(X)$.  

For $m=-1$, our construction produces the number of isomorphism 
classes of points of the groupoid, weighted by their automorphisms;
\begin{equation}\label{sumpi}
\sum_{x\in\pi_0X} \prod_{i\ge 1} \left(\#\pi_i(X,x)\right)^{(-1)^i}
\end{equation}
A number counts as a ``$(-1)$-algebra"; for natural numbers at least, 
which are cardinalities of sets, this matches our intuition. The 
weight of each isomorphism class is the alternating product of the orders 
of the homotopy groups, which places an obvious finiteness condition. 
Without it, we must abandon the top layer of the TQFT. 

For $m=0$, we get the vector space of functions on~$\pi _0X$. 

For $m=1$, we produce the usual groupoid algebra. After 
a choice of base-points, this is the direct sum of the group algebras of 
the $\pi_1$'s of the components, but a Morita equivalent base-point 
free construction is the path algebra of $X$.\footnote{We are 
assuming a discrete model for spaces, such as simplicial sets.} 

Things become more interesting for $m=2$: the groupoid $X= K(\pi_2,2)$
quantizes to the $2$-algebra~$\CC[\pi _2][2]$ associated to the commutative
group algebra $\bC[\pi_2]$. If, in addition, a $\pi_1$ is present, we get a
\emph {crossed product $2$-algebra} $\pi_1\ltimes \bC[\pi_2][2]$. This
easiest to describe by means of the tensor structure on the category of
modules of the underlying $1$-algebra.  As a $1$-algebra, $\pi_1\ltimes
\bC[\pi_2][2]$ consists of the functions on $\pi_1$ with values in the
algebra $\bC[\pi_2]$, and point-wise multiplication. Its linear category of
modules $\mathrm{Rep}(\pi_2) (\pi_1)$, consisting of bundles of
$\pi_2$-representations over $\pi_1$.  Now, the tensor category of
$\bC[\pi_2][2]$-modules is equivalent to $\mathrm{Rep}(\pi_2)$ as a linear
category, but carries the non-standard tensor structure corresponding to
convolution of characters. In the Fourier transformed picture, this is the
category $\vect(\pi_2^*)$ of vector bundles on the Pontryagin dual group, but
with the \emph{point-wise} tensor structure.\footnote {The standard tensor
structure on $\mathrm{Rep}(\pi_2)$ would correspond to convolution on
$\vect(\pi_2^*)$.}  The group $\pi_1$ acts by automorphisms of the
\emph{tensor category} $\vect(\pi_2^*)$: the action comprises the obvious
automorphisms of $\pi_2$, as well as the $k$-invariant $k\in
H^3(B\pi_1;\pi_2)$, if present. Indeed, we may interpret $k$ as a crossed
homomorphism $\pi_1\to B^2\pi_2$, and $\pi_2$ is a group of central
$2$-automorphisms of $\vect(\pi_2^*)$ as a \emph{tensor category}: namely,
elements of $\pi_2$ give $1$-automorphisms (multiplications by Fourier modes)
of $\vect(\pi_2^*)$ as a bi-module category over itself, which as such
represents the identity morphism on the tensor category
$\vect(\pi_2^*)$. Thus, we can form the desired crossed-product tensor
category\footnote{An alternative view: $k$ classifies an extension of $\pi_1$
by $B\pi_2$, which twists the obvious crossed product tensor structure on
$\pi_1\ltimes\vect(\pi_2^*)$.}  $\pi_1\ltimes\vect(\pi_2^*)$ of modules for
$\pi_1 \ltimes \bC[\pi_2][2]$.  More succinctly but loosely, when $\pi_1$
acts trivially and the $k$-invariant is null, $H=\bC[\pi_2](\pi_1)$ is a Hopf
algebra over $\bC[\pi_2]$ and this gives a $2$-algebra structure as explained
in \ref{lowdim}. A non-trivial action of $\pi_1$ is incorporated by twisting
the left $H$-module structure over $H$, as is the $k$-invariant.

The case $m=3$ gives rise to $3$-algebras. Each connected component
$X_x$ is $B\Omega_x{X}$, and $R_3(X_x)$ is the promotion of 
$R_2(\Omega_x{X})$ to a $3$-algebra using the extra multiplication in 
the loop space. However, for simply connected $X$, a braided tensor 
category arises naturally. Namely, the tensor category $\pi_2\ltimes_{
\Omega{k}}\vect(\pi_3^*)$ was defined from the second loop space 
$\Omega^2{X}$, which has a homotopy-commutative multiplication, 
homotopy-commuting with the third multiplication on $R_3(X_x)$. 
(The action of $\pi_2$ is classified by the looping $\Omega{k}\in 
{H}^3(B\pi_2;\pi_3)$ of the unique Postnikov invariant of $X$, $k\in 
H^4(K(\pi_2;2);\pi_3)$.) Otherwise put, the category $\pi_2\ltimes_
{\Omega{k}}\vect(\pi_3^*)$ has a second multiplication compatible 
with the first: this structure is equivalent to a braiding. A $\pi_1$, 
of course, would spoil the requisite commutativity. 

\begin{remark}
Recall that $k$ is equivalent to the datum  of a 
quadratic map $\pi_2\to\pi_3$ (Lemma~\ref{thm:30}), so this quadratic 
map is all that is needed for the construction of the braiding on  
the category $\pi_2\ltimes_{\Omega{k}}\mathrm{Rep}(\pi_3)$. When 
$\pi_3$ is a subgroup of $\bT$, $B^2\pi_3$ acts by automorphisms of 
the $2$-algebra $\bC[2]$ (equivalently, automorphisms of the tensor 
category $\vect$), and we see from the same construction that a 
braiding on $\vect[\pi_2]$ is determined by a $\bT$-valued 
quadratic form on $\pi_2$. 
\end{remark}

\subsection{Outline of the general construction}
By now, the reader may have imagined the inductive procedure for constructing 
the $m$-algebra $R_m(X)$. The underlying vector space, or $0$-algebra, 
comprises the (finitely supported) functions on the $m$-truncated homotopy 
\[
\coprod_{x\in\pi_0X}\pi_m(X,x)\times\dots\times\pi_1(X,x). 
\]
As a $1$-algebra, we see the $\bC[\pi_m]$-valued functions on the 
union of the $(\pi_{m-1}\times\dots\times\pi_1)$, with point-wise 
multiplication. The full $m$-algebra structure can be described recursively. 
First, as an $(m-1)$-algebra, 
\[
R_m(X) = \bigoplus_{x\in\pi_0X} R_{m-1}(\Omega_x{X}).
\] 
Now, the loop spaces $\Omega_x{X}$ carry a multiplication, and the 
induced multiplication on each $R_{m-1}$ allows its promotion to an 
$m$-algebra. Finally, $R_m$ is the direct sum of the resulting $m$-algebras.

\begin{remark} Let us go one step further in unraveling this description: 
for each base-point~$x$, $\pi_1(X,x)$ acts on $B^2\Omega^2_x{X}$; this 
defines an action on the $m$-algebra $R_m(B^2\Omega^2_x{X})$, and 
we have $R_m(X_x) := \pi_1(X,x)\ltimes{R}_m(B^2\Omega^2_x{X})$.
\end{remark}

This procedure has the advantage of producing finite-dimensional 
objects whenever the homotopy groups of $X$ are finite, but the choice 
of base-points in the induction step breaks hopes of functoriality. 
(This is similar to the construction of a minimal model for $X$ from 
its Postnikov tower in rational homotopy theory.) So our notation 
$R_m(X)$ is somewhat abusive: we produce something akin to the group 
ring of the based loop space $\Omega{X}$. A remedy would be to
cross with the \emph{Poincar\'e groupoids}, instead of $\pi_1$. This 
would produce a Morita equivalent algebra; but even with a finite 
model for a higher groupoid, there is then little hope of a 
finite-dimensional answer. 

\subsection{Twisting by cohomology classes in $\bT$} \label{twistquant}
The construction of the $m$-algebra (and in fact of the entire 
$n=m+1$-dimensional TQFT) can be twisted by a class $\tau\in H^{n}(X;\bT)$ 
of our space $X$. This gives a projective co-cycle for actions of 
$\Omega{X}$ on $(m-1)$-algebras, and the twisted group ring can now again 
be defined as the $m$-algebra with the `same' representation category. 
Some examples: for $m=-1$, the points are now weighted by their phase 
in $H^0(X;\bT)$ before counting. For $m=0$, we get a non-trivial action 
of $\pi_1$ on $\bC$ for each component of $X$, and only the invariant 
lines are summed up to produce the vector space. For $m=1$, 
the class gives a central extension of each $\pi_1$, which we quantize 
to the sum of the corresponding twisted group algebras. For $m=2$ 
and connected $X$, $H^3(X;\bT)$ classifies the crossed homomorphisms  
$\pi_1\to \pi_2^*$, and $\tau$ twists the action of $\pi_1$ on 
$\vect(\pi_2^*)$, resulting in a different crossed product  
$\pi_1\ltimes_{\tau,k} \bC[\pi_2][2]$. Finally, for pure gerbes $K(\pi,2)$, 
a $4$-class $\tau$ defines a quadratic map $\pi\to\bT$, which defines 
a braided tensor structure on $\vect[\pi]$. The categories $\sky^\tau[\frt], 
\sky^\tau[\hat{F}]$ from~\S\ref{sec:c1} are of this form.  

The construction of a general twisted groupoid $m$-algebra follows the 
inductive procedure sketched earlier. Note that, in the absence of 
twistings, $R_m(X) = R_m(X)^{op}$; this is related to the fact that 
the associated TQFT does not require orientations.

\subsection{Construction of the quantization map~$\Sum_n$}  

The category $FH$ of spaces with finitely many, finite homotopy groups has
disjoint unions, products and fiber products (homotopy fiber products). As
in~\S\ref{sec:5}, define the $n$-category $FH_n$ of correspondences in $FH$
(truncated at level $n$). We will enhance the assignment $X\mapsto
R_{n-1}(X)$ into a symmetric tensor\footnote {$Sum_n$ will take disjoint
unions to direct sums and fiber products to tensor products.} functor 
  \begin{equation}\label{eq:70}
     \Sum_n: FH_n \to \mathrm{Alg}[n-1] 
  \end{equation}
into the $n$-category\footnote{This is a particular choice of~`$\C$'
in~\S\ref{sec:5}.}~$\textnormal{Alg}[n-1]$ of $(n-1)$-algebras.  Having fixed
an $X\in\mathrm{Ob}(FH)$, we pre-compose with the mapping space functor
$I:\mathrm{Bord}_n \to FH_n$ to produce the `TQFT with target $X$', a theory
with values in $\mathrm{Alg}[n-1]$.  \begin{remark} There is a similar
functor $F^\tau$ on the category $FH_n^\tau$, whose objects are spaces with
an $n$-co-cycle $\tau$ valued in $\bT$, $1$-morphisms are correspondences
equipped with a homotopy between pulled-back cocycles, and so forth. This
leads to the twisted TQFTs for \emph{oriented} manifolds; we will not spell
out its details here.
\end{remark}

Whereas on objects $\Sum_n(X)= R_{n-1}(X)$, the formula for morphisms is
increasingly complex and we pause for a moment to explain why. If $R_{n-1}$
was a \emph{contravariant} functor with ``good" tensor properties, we would
just apply it to morphisms of all levels.  The reader might think of the
algebra of co-chains on a space, which converts correspondences to
bi-modules. (As a technical aside, this does \emph{not} always have the
required tensor properties, but a closely related functor does: the
co-algebra of chains, with co-tensor products.)  However, our $R_{n-1}$ is
the Koszul dual functor of chains on the based loop space. We thus use Koszul
duality as a recipe to define $\Sum_n$.  The general rule is that a
correspondence at level $k$, given by a $k$-storied diagram of spaces, gets
sent to the colimit of the same diagram of group rings.

We illustrate this for $1$- and $2$-morphisms. We may assume connectivity 
of our spaces (separate components are handled separately). For a
correspondence $c:C \to X\times Y$, call $F$ the homotopy fiber 
of $c$. It may be viewed as the anti-diagonal quotient $(\Omega{X}\times
\Omega{Y})/\Omega{C}$; the obvious action of $\Omega{X}\times\Omega{Y}$ 
on this last space is the structural one on $F$. To $C$, we assign the 
$(R_{n-1}(X),R_{n-1}(Y))$-bi-module
 \[
\Sum_n(C): = R_{n-1}(X)\underset{R_{n-1}(C)}\bigotimes{R}_{n-1}(Y) 
	\cong R_{n-2}(F).
 \]
This is an $(n-2)$-algebra, as the top multiplication layer has been 
used up to build the tensor product. The geometric $\Omega{X}\times
\Omega{Y}$ actions on $F$ induce the bi-module structure on $R_{n-2}(F)$

Now let $D \to C\times C'$ be a correspondence of correspondences; 
the compositions $D\to X\times Y$ must agree. With $H$ denoting the 
homotopy fiber of the last map, there is induced a map $h:H \to F
\times F'$ with compatible $\Omega{X}\times\Omega{Y}$ action. The 
homotopy fiber $G$ of $h$ carries $\Omega{F}\times\Omega{F'}$ actions and 
intertwining $\Omega{X}\times\Omega{Y}$ actions; in fact, 
\begin{equation}\label{2cor}
G \sim \frac{\Omega{F}\times\Omega{F'}}{\Omega{H}}\sim \frac{\Omega^2{X} 
	\times
\Omega^2{Y}}{\Omega^2{C}\underset{\Omega^2{D}}\times\Omega^2{C'}},
\end{equation}
with the left$\times$right action of $\Omega^2{C}\times\Omega^2{C'}$ on each 
factor in the second numerator; we have just enough commutativity to 
mod out by the anti-diagonal $\Omega^2{D}$. The desired multi-module is 
 \[
\Sum_n(D) := R_{n-3}(G) = R_{n-2}(F)\underset{R_{n-2}(H)}\bigotimes R_{n-2}(F') 
= R_{n-2}(X)\underset{R_{n-2}(C)\underset{R_{n-2}(D)}\otimes{R}_{n-2}(C')}
\bigotimes{R}_{n-2}(Y);
 \]
in the first presentation, tensoring in the $(n-2)$-multiplications  
ensures that we stay in the realm of ``$(R_{n-1}(X),R_{n-1}(Y))$ bi-algebra 
$(R_{n-2}(F),R_{(n-2)}(F'))$-bi-algebras", but uses up the top two level 
products on $\Sum_n(D)$ and leaves an $(n-3)$-algebra. In the second 
presentation, we view $D$ as a $(C,C')$-correspondence in a lower-dimensional 
field theory and tensor over the $(n-2)$-algebra $\Sum_{n-1}(D)$, acting 
left$\times$right on each factor.  

The fun continues, but we will stop. 

\begin{remark} Recall the finite group~$F$ from~\ref{subsec:3.1}.
Consider the gerbe $B^2F$, equipped with the class $\tau\in
H^4(B^2F;\bT)$ described in Lemma~\ref{thm:30}.  The space $C$ of maps from a
closed $4$-manifold $X$ to $B^2F$ can be viewed as a top-level correspondence
from the point to itself, and \eqref{sumpi} is the Gauss sum computed
in~\ref{subsec:6.1}.

\end{remark}

\section{The quantum theories}

The braided tensor categories $\sky^\tau[F]$, $\sky^\tau[\frt^{(n)}]$ and
$\sky^\tau[\frt]$ generate $4$-dimensional TQFTs with target space the
corresponding ($\tau$-twisted) gerbes. Their $4$-manifold invariants were
computed as a path integral in~\ref{subsec:6.1}; we have seen their agreement
with the numbers provided by the general quantization procedure. The
invariants for $\frt$ need to be renormalized by a power of the volume of
$\frt$; this is explained by the increasing number of points in its finite
approximation $\frt^{(n)}$.  We discuss these theories further
in~\ref{cs9.2}.

\subsection{The quantum gerbe theories on $F, \frt$ and
$\frt^{(n)}$}\label{cs9.2}  

We now describe the theories $\cA_S$ on closed manifolds of dimension 
below $4$; $\cA_\frt$ can be justified either as the limit of $\cA_{
\frt^{(n)}}$'s, or in its own right by judicious use of the word 
`continuous.' The Lie algebra theory also has a few interesting variations, 
which tie in with positive energy representations of $L\frt$ and 
their fusion. 
\vskip2ex
\noindent\emph{The point.} We already know the braided categories 
$\sky^\tau[\hat{F}]$,  $\sky^\tau[\frt]$, and~$\sky^\tau [\frt^{(n)}]$
assigned by $\cA$ to a  
positively oriented point;  $\cA$ sends the negative point to the 
opposite object, the category with opposite braiding. Recall that, 
in all cases, the twisting $\tau\in H^4(-;\bT)$ which defines the 
braided structure is given by a non-degenerate quadratic map $q$ 
to $\bT$ (see \eqref{eq:49}, \eqref{eq:55}, \ref{cs6.1}).
\vskip2ex

\noindent\emph{The circle.} Here, $\cA_F$ assigns the $2$-algebra
quantization of the groupoid $LB^2F = BF\times B^2F$, with twisting
$\Omega\tau\in H^3(LB^2F;\bT)$ transgressed from $\tau$; this class~$\Omega
\tau $ represents the bihomomorphism $b: F\times{F} \to\bT$ derived from
$q$. According to \ref{twistquant}, the quantization is the crossed product
$R : = F\ltimes_{\Omega\tau}\bC[F][2]$ with action twisted by the
transgression of $\tau$. We have several pictures for this.

\begin{enumerate}
\item The tensor category of $R$-modules is $F\ltimes_{b}\vect(F^*)$, 
after identifying $\bC[F]$-modules with vector bundles on $F^*$. The 
group $F$ acts on $F^*$ by translation, via $b$. Note that this is 
equivalent to the matrix algebra $M_{F^*\times{F}^*}(\vect)$ on 
$\vect(F^*)$! 
\item $R$-modules in algebras are $\bC[F]$-algebras with an action 
of the group $F$ by automorphisms, which twisted commutes with the 
central $\bC[F]$: $a_f(f') = f' \cdot{b}(f,f')$ for $f,f'\in F$, 
with $a_f$ denoting the 
automorphism and $f'\in F$ embedded in the central $\bC[F]$. 
\item Related to this is the $2$-category of $R$-linear categories: 
these are $\bC$-linear categories with a projective action of $F\times BF$ 
with co-cycle $b$. This is an action of $F$ by linear functors plus a 
second action of $F$ by automorphisms of the identity functor (central
automorphisms of all objects), which must be related by $a_f(f'_x) = 
f'_{a_f(x)} \cdot{b}(f,f')$, for each object $x$. 
\item The same $2$-category has a different presentation which emphasizes 
its loopy nature: the $2$-category $\cB{r}^\tau(F)$ of (fully) 
\emph{braided bi-module categories} over the braided category $\sky^\tau[F]$. 
The full braiding gives the $BF$-action. We will see that its counterpart 
$\cB{r}^\tau(\frt)$ relates nicely to positive energy representations 
of $L\frt$. 
\end{enumerate}

The algebra $\bC[F]$ is a module object over $R$: this is because the 
linear category $\mathrm{Rep}(F)$ of $\bC[F]$-modules is naturally a 
module over the tensor category $F\ltimes_{b}\vect(F^*)$ in (i); more 
precisely, it is the standard simple module of $M_{F^*\times{}F^*}(\vect)$. 
This implies the following.

\begin{proposition}
For non-degenerate $q$, $R$ is Morita equivalent 
to $\bC[2]$, with bi-module $\bC[F]$. \qed
\end{proposition}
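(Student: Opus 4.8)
The plan is to run the verification at the level of module categories, via the dictionary of~\ref{lowdim}: a $2$-algebra is recorded (modulo the realizability caveat) by its tensor category of modules, and a $1$-morphism $R\to\bC[2]$ of $2$-algebras is a $1$-algebra $N$ together with compatible bimodule data, whose module category $N\mathrm{-Mod}$ is a module category over $R\mathrm{-Mod}$ (the $\bC[2]\mathrm{-Mod}=(\vect,\otimes)$ side being canonical). Such a morphism is invertible in the $3$-category of $2$-algebras exactly when $N\mathrm{-Mod}$ exhibits $R\mathrm{-Mod}$ as \emph{Morita trivial}, i.e.\ identifies it with the endomorphism tensor category of $N\mathrm{-Mod}$. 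So it suffices to (a) identify $R\mathrm{-Mod}$ with a matrix tensor category over $\vect$, and (b) recognize $\bC[F]$ as the $1$-algebra whose module category is the distinguished ``column'' module realizing the equivalence.

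For (a) I would start from picture~(i) above. Fourier transform identifies $\bC[F]\mathrm{-Mod}$ with $\vect(F^*)$ under the pointwise tensor product, and the crossed-product structure of $R$ turns $R\mathrm{-Mod}$ into $F\ltimes_{b}\vect(F^*)$, where $F$ acts on $F^*$ by translation through $\beta\colon F\to F^*$, $f\mapsto b(f,-)$, with associator the transgressed twisting $\Omega\tau$ representing $b$. Here the hypothesis enters: since $q$ is non-degenerate, its associated bihomomorphism $b$ is a perfect pairing, so $\beta$ is an isomorphism and $F$ acts \emph{simply transitively} on $F^*$. For a finite group acting freely and transitively on a finite set the crossed product collapses to the matrix tensor category $M_{F^*\times F^*}(\vect)=\End_{\vect}\!\bigl(\vect(F^*)\bigr)$, with the single column $\vect(F^*)$ as its distinguished invertible $\bigl(M_{F^*\times F^*}(\vect),\vect\bigr)$-bimodule category. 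I expect this collapse to be the point requiring the most care, precisely because of the cocycle twist $\Omega\tau$: one must check that it leaves no residue. The reason is that a free transitive action has trivial point-stabilizer, so the crossed product is canonically $\End$ of the induced module $\mathrm{Ind}_{1}^{F}(\vect)=\vect(F^*)$, and the twisting class, being inflated from the stabilizer, dies. This is the categorified analogue of $\bC(X)\rtimes G\cong M_{|X|}(\bC)$ for $G$ acting freely transitively on $X$, and it is the step I would write out in full.

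For (b), under the identification $\bC[F]\mathrm{-Mod}\simeq\vect(F^*)$ the standard column module is precisely $\mathrm{Rep}(F)$, which --- as recorded just before the statement --- is a module over $R\mathrm{-Mod}=M_{F^*\times F^*}(\vect)$ and is its simple module. Hence $\bC[F]$, viewed as the $1$-algebra underlying a morphism of $2$-algebras, carries exactly the module category that trivializes $R\mathrm{-Mod}$. Invertibility of this morphism then follows formally, one categorical level up, from the standard Morita theory of matrix algebras: $M_{F^*\times F^*}(\vect)$ is Morita trivial with distinguished module $\vect(F^*)$ just as $M_{|F^*|}(\bC)$ is via $\bC^{|F^*|}$, so the two composites of $\bC[F]$ with itself recover $R$ and $\bC[2]$ up to equivalence. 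This yields the asserted Morita equivalence, with bimodule $\bC[F]$.
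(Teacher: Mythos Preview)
Your proposal is correct and follows essentially the same approach as the paper. The paper gives no separate proof (the \qed is attached to the statement): it simply records, in the sentences immediately preceding the proposition, that $R\textnormal{-Mod}\simeq F\ltimes_b\vect(F^*)\simeq M_{F^*\times F^*}(\vect)$ and that $\mathrm{Rep}(F)=\bC[F]\textnormal{-Mod}$ is its standard simple module, exactly the content of your steps~(a) and~(b); you have merely made explicit the role of non-degeneracy (simple transitivity of the $F$-action on $F^*$) and the triviality of the cocycle on trivial stabilizers, which the paper leaves implicit.
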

 
\noindent\emph{Closed Surfaces.} The space of maps from a closed 
surface $\Sigma$ to $B^2F$ factors as $B^2F\times{}H^1(\Sigma;F) \times
F$. The transgression $\tau_\Sigma$ of the twisting co-cycle has two 
components: one of them defines the Heisenberg central extension of 
$H^1(\Sigma;F)$ constructed from $q$ and the Poincar\'e duality 
pairing, the other, on $B^2F$, gives the character $f\mapsto b(f,f_0)$, 
in the component labeled by $f_0$. Quantization produces the group 
ring, the Weyl algebra $W_\tau\big(H^1(\Sigma;F)\big)$ for the 
component $f_0=0$, and kills the other components. 

At this stage, invertibility of the theory has become obvious, because 
we get a matrix algebra. Despite this, the theory is \emph{not} trivial 
on surfaces. Indeed, while the mapping class group of $\Sigma$ acts by 
automorphisms on the Weyl algebra, this conceals a central extension by 
eighth roots of unity, given (when $\frt$ has rank one) by the reduction 
mod $8$ of the Maslov index. This appears when attempting to lift the 
action on Lagrangian subspaces in $H^1$ to the associated Schr\"odinger 
representations~\cite[\S I.4]{P}.

\begin{remark}
Even when this extension is trivial, such as for spin surfaces, 
two trivializations may differ by a (half-integral) power of the 
determinant line. This stems from an \emph{invertible} $3$-dimensional 
theory, and the usual framing anomaly in Chern-Simons theory can 
be concealed therein. 
\end{remark} 
 
\vskip1ex

\noindent\emph{$3$-folds with boundary.}
A bounding $3$-manifold $M$ gives a Lagrangian subspace in  $H_1(\Sigma)$;  
we can then form the Schr\"odinger representation $\mathbf{S}$ of 
$W(\partial{M})$~\cite{P}. This is the morphism $\cA_F(M)$ 
from $\bC$ to the Weyl algebra. Gluing two such manifolds into a closed one 
produces the line $\mathrm{Hom}_{W(\partial{M})}(\mathbf{S_-},\mathbf{S}_+)$. 
This line has a preferred trivialization on any oriented $3$-manifolds, 
but also carries a natural action of the group $\bZ$ of global frame changes. 

\subsection{Chern-Simons theory}\label{sec:c9}

Consider the $3$-dimensional gauge theory $Z_{(n)}$ with finite group
$T_{(nF)}$ and twisting class $\tau\in H^3(BT_{(nF)};\bT)$. Such theories
were described in ~\ref{subsec:1.5}.  From the perspective of
Theorem~\ref{bd}, the theory~$Z_n$ is generated by the tensor
category\footnote{In $2$-algebra language, $\sky^\tau[T_{(nF)}]$ is the
tensor category of modules for the algebra of functions on $T_{(nF)}$, with
$\tau$-twisted Hopf $2$-algebra structure.}
$\sky^\tau[T_{(nF)}]$. Similarly, we should view $\sky^\tau[T]$ as the
generating tensor category for a theory $Z$, which is an $L^2$ version of
Chern-Simons theory with gauge group $T$. For example, the vector space
associated to a closed surface is the space of $L^2$ sections of the
Theta-line bundle~$\Theta (\tau )$ on the $T$-Jacobian $J_T$ of flat bundles
(whereas the usual, holomorphic Chern-Simons theory would supply the
holomorphic sections). It is easiest to justify the relation of
$\sky^\tau[T]$ to $L^2$ gauge theory by using the finite approximations
$Z_{(n)}$: sections of $\Theta(\tau)$ on $T_{(nF)}$-bundles approximate
$L^2(J_T)$ as $n\to\infty$.  We will see in~\ref{cs9.2} that this goes for
$1$- and $3$-dimensional outputs as well. (The limit must be regularized, so
that, for instance, summation over torsion points becomes integration in the
limit. $L^2$-Chern-Simons itself requires regularization: vectors of bounding
$3$-manifolds are $\delta$-sections on the respective Lagrangians in $J_T$,
and the $3$-manifold invariant is only 'obviously' finite for for rational
homology spheres.)

One picture of a $3$-dimensional TQFT, as a stand-alone theory, is as an
endomorphism of the trivial $4$-dimensional theory, with the top level
truncated.\footnote{Without truncation, all morphisms between TQFTs are
isomorphisms. At any rate, a $3D$ TQFT does not supply anything in dimension
$4$.} If, however, a $3D$ theory is a module over another, non-trivial $4d$
TQFT $\cA$---meaning that the algebras arising various dimensions is are
module objects over their $4d$ counterparts, in a consistent way---then it
can be viewed as a truncated morphism $1\to \cA$. If $\cA$~is invertible,
this is the notion of an `anomalous field theory' with anomaly~$\cA$.
Similarly, a bi-module for theories $(\cA_\frt, {\cA}_F)$ yields a truncated
morphism ${\cA}_F \to \cA_\frt$. Of course, concerning $Z$, we have the gerbe
theories for $F$ and $\frt$ in mind.  The following summarizes our results.

\begin{theorem}\label{thm:9.1}
\begin{trivlist}\itemsep0ex
\item (i) The $L^2$-Chern-Simons theory $Z$ generated by $\sky^\tau[T]$ 
gives an \emph{isomorphism} $\cA_F \to \cA_\frt$ of oriented 
$4d$ theories.
\item (ii) The finite gauge theory $Z_{(n)}$ generated by $\sky^\tau
[T_{(nF)}]$ gives an \emph{isomorphism} ${\cA_F} \to \cA_{\frt^{(n)}}$ 
of oriented $4d$ theories. 
\item (iii) $Z$ gives a \emph{truncated morphism} $1 \to \cA_\frt$ of 
oriented $4d$ theories. 
\item (iv) A Morita equivalence $\vect\sim \sky^\tau[\frt]$ induces 
an isomorphism $\cA_\frt \to 1$; after a composition with such 
an isomorphism, $Z$ becomes isomorphic to $3d$ (holomorphic) 
Chern-Simons theory.
\item (v) $Z_{(n)}$ gives a \emph{truncated morphism} $1 \to 
\cA_{\frt^{(n)}}$ of oriented $4d$ theories. A Morita equivalence 
$\vect\sim \sky^\tau[\frt^{(n)}]$ induces an isomorphism 
$\cA_{\frt^{(n)}} \to 1$; after a composition with such 
an isomorphism, $Z_{(n)}$ becomes isomorphic to $3d$ (holomorphic) 
Chern-Simons theory.
\end{trivlist}
\end{theorem}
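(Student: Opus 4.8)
The plan is to derive all five parts of Theorem~\ref{thm:9.1} from one structural input, the Morita equivalences of Proposition~\ref{morita}, together with the functoriality of the (relative, oriented) cobordism hypothesis. I would first fix the dictionary: the $4$-dimensional anomaly theories $\cA_F$, $\cA_{\frt}$, $\cA_{\frt^{(n)}}$ are the fully extended oriented TQFTs obtained by applying Theorem~\ref{bdo} to the symmetric monoidal $4$-category $\C$ of braided tensor categories (\ref{cs7.2}), starting from the ribbon categories $\sky^\tau[\hat F]$, $\sky^\tau[\frt]$, $\sky^\tau[\frt^{(n)}]$ of Remark~\ref{selfdual}(ii); their agreement with the Gauss-sum evaluations~\eqref{eq:53} and~\eqref{eq:57} and with the values tabulated in~\ref{cs9.2} has already been recorded. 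The $3$-dimensional theories $Z$ and $Z_{(n)}$ are, by definition, the $1$-morphisms $\sky^\tau[T]$ and $\sky^\tau[T_{(n)}]$ of $\C$, carrying all of their dualizability data.

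I would then isolate the functoriality principle to be used repeatedly: a $1$-morphism $M\colon A\to B$ between fully dualizable objects of $\C$ induces, via a relative form of the cobordism hypothesis, a morphism between the $4$-dimensional theories generated by $A$ and $B$, with underlying ``$3$-dimensional'' datum $M$ itself---so the induced morphism is \emph{truncated} unless $M$ is invertible, and it is an isomorphism exactly when $M$ realizes a Morita equivalence $A\sim B$. Granting this, parts~(i) and~(ii) are immediate: Proposition~\ref{morita}(i) says $\sky^\tau[T]$ is an invertible $1$-morphism $\sky^\tau[\hat F]\to\sky^\tau[\frt]$, so $Z$ is an isomorphism $\cA_F\to\cA_{\frt}$, and Proposition~\ref{morita}(ii) yields $Z_{(n)}\colon\cA_F\xrightarrow{\sim}\cA_{\frt^{(n)}}$ in the same way.

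For parts~(iii) and~(v) I would invoke Proposition~\ref{centerT}(ii) (resp.\ Proposition~\ref{approxcenter}(ii)): the inclusion $\sky^\tau[\frt]\hookrightarrow\sky^\tau[C]=Z(\sky^\tau[T])$ exhibits $\sky^\tau[T]$ as a \emph{non}-invertible $1$-morphism $\vect\to\sky^\tau[\frt]$ (equivalently, one precomposes~(i) with the augmentation $1\to\cA_F$ coming from the braiding on $\sky^\tau[\hat F]$), and the principle above produces the truncated morphism $1\to\cA_{\frt}$; the finite case is identical. For part~(iv) one is handed a Morita equivalence $\vect\sim\sky^\tau[\frt]$---produced as in the discussion after Proposition~\ref{morita}, from an $E_8$-torus when $\sign\form\equiv 0\pmod 8$, from the half-generator of $H^4(B\mathrm U(1);\bZ)$ on spin manifolds, or in general after stabilization---so the principle gives an isomorphism $\cA_{\frt}\to 1$. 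Composing with~(iii) leaves a standalone $3$-dimensional theory $1\to 1$, which I would identify with Reshetikhin--Turaev (holomorphic) Chern--Simons by computing its value on the circle: it is the modular tensor category $\sky^\tau[\hat F]$ (the composite $1$-morphism $\vect\to\vect$ is a tensor category whose Drinfeld center is $\sky^\tau[\hat F]$), and Remark~\ref{selfdual}(v) names the $1$--$2$--$3$ theory it generates. Part~(v) is the verbatim finite analogue, and one checks it matches~(iv) in the limit $n\to\infty$ after regularizing the ``volume'' factors of $\frt$ as in~\ref{cs9.2}.

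The main obstacle is the \emph{oriented} (rather than framed) content of the functoriality principle. Over framed manifolds the reductions above are formal once Proposition~\ref{morita} is available, but to descend to $\BordSO_4$ one must carry the $SO(4)$-homotopy-fixed-point structure---the ribbon/balancing data of Remark~\ref{selfdual}(ii)---across each Morita equivalence, and the difficulty is exactly that these equivalences are \emph{not} ribbon equivalences: by Remark~\ref{selfdual}(iii) the splitting $\sky^\tau[C]\equiv\sky^\tau[\frt]\otimes\sky^\tau[\hat F]$ fails to respect the (quadratic, not linear) ribbon element. Unwound, this is the framing anomaly of Chern--Simons theory, and making the plan rigorous amounts to tracking how the anomalous prefactors $\lambda^{\Euler X}$ and $\mu^{(\sign\form)(\sign X)}$ of~\eqref{eq:57} are transported along $Z$; the constant $\lambda$ and the chosen $SO(4)$-action are the parameters one must pin down (cf.\ the footnote at the end of~\ref{cs6.3}). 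A secondary, more foundational, difficulty is the non-finite case $\frt$ itself: $\sky^\tau[\frt]$ and $\cA_{\frt}$ are controlled only through the lattice approximations $\frt^{(n)}$ (or ``by judicious use of the word continuous''), so the limiting arguments behind parts~(i), (iii) and~(iv) for $\frt$ are, as elsewhere in the paper, sketched rather than complete.
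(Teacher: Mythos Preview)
Your proposal is correct and follows essentially the same route as the paper: the authors state that ``the theorem is really a corollary of Proposition~\ref{morita}---or at least it would be so, if we supplied the information needed to make the braided tensor categories into fully dualizable objects with $SO(4)$-actions,'' and then, rather than filling that gap, they unpack what each theory assigns dimension by dimension via the tables in~\ref{subsec:c9.3}. You have articulated the same reduction to Proposition~\ref{morita} plus the (relative, oriented) cobordism hypothesis, and you have correctly identified the same two obstacles the paper flags---the missing $SO(4)$-equivariance data (their footnote on the Euler-class parameter) and the need to justify the continuous case via the lattice approximations $\frt^{(n)}$---so there is nothing to correct.
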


\begin{remark}
 Recall from~\ref{cs6.3} that a Morita isomorphism as in (iii) and (v) can be
found whenever $8$ divides the signature of $\tau$, or anytime we work with
graded objects and spin manifolds. Items (ii) and (v) are perhaps not of much
interest, but in the $n\to\infty$ limit they serve to justify rigorously the
claims (i), (iii) and (iv); without that, we would need to delve into
topological categories and their continuous centers.  The truncated
morphism~$Z$ in~(iii) is (holomorphic) Chern-Simons as an anomalous theory.
\end{remark}

\noindent The theorem is really a corollary of Proposition~\ref{morita}---or
at least it would be so, if we supplied the information needed to make the
braided tensor categories into fully dualizable objects with $SO(4)$-actions
in the world of braided tensor categories.\footnote {Something is missing, as
can be seen from the invariant computation in~\ref{subsec:6.1}, where an
Euler characteristic-coupled parameter must be supplied.}  We shall not do
this; instead let us explain what the theories assign in each dimension. In
particular, we will recover the usual modular tensor category~\cite{St} for
holomorphic Chern-Simons for $T$ as the relative Drinfeld center of
$\sky^\tau[T]$ over $\sky^\tau[\frt]$.

\subsection{Chern-Simons as an anomalous theory}\label{subsec:c9.3}  

We condense the relations between our theories on closed manifolds $X$ in the
following table, in which the third column gives a Morita isomorphism between
the second and fourth columns. We have written $Z(X)$ for the $L^2$
Chern-Simons invariant of a $3$-manifold, a renormalized count of the flat
$T$-bundles on $X$~\cite{Ma}, while $\cA_{F,\frt}(X)$ refers to the
$4$-manifold invariants computed in~\ref{subsec:6.1}.

\bigskip

\begin{center} \begin{tabular}{ c c c c }  
  \toprule 
  $\dim{X}$  & $\cA_\frt(X)$ & $Z(X)$  & $\cA_F(X)$\\  
  \midrule 
  $0$ & $\sky^\tau[\frt]$ & $\sky^\tau[T]$ & $\sky^\tau[\hat{F}]$ \ \\ [-8pt]\\  
  $1$ & $\frt\ltimes_b\sky[\frt^*]$ & $\sky^\tau[C]$ & $F\ltimes_b\sky[F^*])$ 
  							\\ [-8pt]\\
  $2$ & $W_\tau\big(H^1(X;\frt)\big)$ &$L^2\big(J_T(X); \Theta(\tau)\big)$& 
  			$W_\tau\big(H^1(X;F)\big)$\\  [-8pt]\\
  $3$ & $\bC$ & $Z(X)$ & $\bC$  \\ [-8pt]\\
  $4$ & $\cA_\frt(X)$ & --- & $\cA_F(X)$ \\
     \bottomrule \end{tabular} \vskip2ex
     \begin{small}{Theories on closed manifolds}\end{small} \end{center}

\bigskip
There is a matching equivalence with the finite theories for 
$T_{(n)}, \frt^{(n)}$:
\bigskip

\begin{center} \begin{tabular}{ c c c c }  
  \toprule 
  $\dim{X}$  & $\cA_{\frt^{(n)}}(X)$ & $Z_{(n)}(X)$  & $\cA_F(X)$\\  
  \midrule 
  $0$ & $\sky^\tau[\frt^{(n)}]$ & $\sky^\tau[T_{(nF)}]$ & $\sky^\tau[\hat{F}]$ 
  \\[-8pt]\\  
  $1$ & $\frt^{(n)}\ltimes_b\sky[\frt^{*(n)}]$ & $\sky^\tau[C^{(n)}]$ &
$F\ltimes_b\sky[F^*])$ \\ [-8pt]\\
  $2$ & $W_\tau\big(H^1(X;\frt^{(n)})\big)$ &$L^2\big(J_{T_{(nF)}}(X); 
  			\Theta(\tau)\big)$ & $W_\tau(H^1(X;F))$\\  [-8pt]\\
  $3$ & $\bC$ & $Z_{(n)}(X)$ & $\bC$  \\ [-8pt]\\
  $4$ & $\cA_{\frt^{(n)}}(X)$ & --- & $\cA_F(X)$ \\
     \bottomrule \end{tabular} \vskip2ex
     \begin{small}{Finite theories on closed manifolds}\end{small} \end{center}

\bigskip
\noindent Experts may have recognized in the right column the \emph{double} 
of Chern-Simons theory for the torus $T$ at level $\tau$. This is explained 
as follows. 

The second and third columns in each table give our advertised description of 
(holomorphic) Chern-Simons theory as an anomalous theory. We claim that 
$Z$ is `finite as a module over $\cA_\frt$, with holomorphic Chern-Simons 
theory as a basis' (and similarly for $Z_{(n)}$ and $\cA_{\frt^{(n)}}$).
The most obvious instance is that of the vector space 
\begin{equation}\label{L2funcs}
Z(\Sigma) = L^2\big(J_T(\Sigma); \Theta(\tau)\big),
\end{equation}
for a closed surface $\Sigma$. The theory of Theta-functions tells us that,
after a choice of complex structure, \eqref{L2funcs} factors into 
holomorphic and anti-holomorphic sectors; the former is the space of 
holomorphic Theta-functions, an irreducible representation of the 
Heisenberg group on $H^(\Sigma;F)$, and the latter the anti-holomorphic 
Fock representation of the Weyl algebra $W\big(H^1(\Sigma;\frt)\big)$. 

One dimension down, the center $\sky^\tau[C]$ of $\sky^\tau[T]$, plays the
role of the modular tensor category for $L^2$ Chern-Simons.  The braided
structure on $\sky^\tau[C]$ makes this into a braided bi-module over
$\sky^\tau[\frt]$, thus a $\frt\ltimes_b\sky[\frt^*]$-module. It is free
with basis $\sky^\tau[\hat{F}]$, in the sense that it converts into the
latter, after the Morita equivalence of $\frt \ltimes_b\sky^\tau[\frt^*]$
with $1$ defined by $\sky[\frt^*]$. Such a Morita equivalence can be induced
from a trivialization of $\cA_\frt$ by means of $\sky^{\tau'}[T]$ at a
level $\tau'$ giving a trivial group $F'$;\footnote{Using graded vector
spaces, see~\ref{cs6.3}.} for indeed, $\sky^{\tau'}[C']$ is then just
$\sky[\frt^*]$, as a braided bi-module.  (The change to another level of the
same signature can be accommodated by scaling the Lie algebra.) This is also
related to the computation of the relative center of $\sky^\tau[T]$ over the
braided category $\sky^\tau[\frt]$.

In dimension $0$, our data for Chern-Simons theory is new.

\subsection{Surfaces with boundary in another model for $\frt$-gerbe theory}
In this model $\mathrm{Rep}(L\frt)$ for $\sky^\tau[\frt]$, closely related 
to loop groups, the objects are semi-simple, projective, positive energy 
modules of the smooth loop Lie algebra $L\frt$, with projective co-cycle
 \[
(\xi,\eta) \mapsto \oint \;\langle\xi,d\eta\rangle.
 \]
These representations are invariant under diffeomorphisms of the circle; 
non-invariantly, they factor into  a semi-simple representation of $\frt$ 
and the Fock representation of $L\frt/\frt$, the latter being a Morita 
factor in the equivalence.

The \emph{fusion} of two representations is defined using a pair 
of pants $P$: the Weyl algebra $W_\tau(P)$ of the pair of pants 
(see below) accepts maps from three commuting copies of the Weyl 
algebra $W_\tau(L\frt)$ of $L\frt$. The fusion of two boundary 
representations is the induced module from their tensor product, 
as a $W_\tau(P)$-module, to the the third boundary. Fusion gives 
a braided tensor structure on $\mathrm{Rep}(L\frt)$, which makes it 
equivalent to $\sky^\tau[\frt]$. $W_\tau(L\frt)$ itself is the underlying 
$3$-algebra, when equipped with the fusion product via the braided 
tri-module $W_\tau(P)$.

Associated to a surface $\Sigma$ with boundary is the symplectic vector 
space $S_\Sigma:= \Omega^1(\Sigma)/d\Omega^0(\Sigma,\partial\Sigma)$ of 
closed forms modulo differentials of functions vanishing on $\partial\Sigma$; 
the symplectic form is $\int_\Sigma \varphi\wedge\psi$. The Weyl 
algebra $W_\tau(S_\Sigma\otimes\frt)$ is a braided bi-algebra for the 
$W_\tau(L\frt)$'s at each boundary circle. This promotes $W_\tau(\Sigma)$ 
to an object of $\cB{r}^\tau$, the $2$-category (equivalent to that) of 
$\frt\ltimes_b\sky[\frt^*]$-modules, and the thus promoted $W_\tau(\Sigma)$
is $\cA_\frt(\Sigma)$. 

A model $S'_\Sigma $ for $S_\Sigma $ that removes the Morita factors
$\mathrm{Fock} (L\frt/\frt)$ from $W$ uses only differentials whose
restriction to each boundary circle is constant (in a fixed
parameterization). We lose the connection to loop groups, but this is
convenient for describing $Z(\Sigma)$ (without having to Morita-modify
$C$). Now, $Z(\Sigma)$ is a functor between products of copies of
$\sky^\tau[C]$, one for each boundary circle: in effect, a vector bundle over
a product $C^n$.  The moduli space $J_T(\Sigma,\partial\Sigma)$ of flat
$T$-bundles on $\Sigma$ equipped with \emph{constant} connections on
$\partial\Sigma$ projects to $\frt^n$ by the boundary holonomies; call this
map $p$.  $Z(\Sigma)$ is the bundle over $\frt^n$ of fiber-wise $L^2$
sections of $\Theta(\tau)$ along $p$; it is naturally a finite module over
$W_\tau(S'\otimes\frt)$ under its translation of the Jacobian. The
$\hat{F}$-components of $Z(\Sigma)$ are determined by the weight space
decomposition under the actions of $F$ at the boundaries.


\providecommand{\bysame}{\leavevmode\hbox to3em{\hrulefill}\thinspace}
\providecommand{\MR}{\relax\ifhmode\unskip\space\fi MR }
\providecommand{\MRhref}[2]{%
  \href{http://www.ams.org/mathscinet-getitem?mr=#1}{#2}
}
\providecommand{\href}[2]{#2}

\end{document}